\newcommand{\R}{\mathbb{R}}
\newcommand{\N}{\mathbb{N}}
\newcommand{\E}{\mathbb{E}}
\newcommand{\F}{\mathbb{F}}
\newcommand{\G}{\mathbb{G}}
\renewcommand{\H}{\mathbb H}
\renewcommand{\P}{\mathbb{P}}
\newcommand{\B}{\mathbb{B}}
\newcommand{\Z}{\mathbb{Z}}
\newcommand{\tr}{\operatorname{trace}}
\newcommand{\Q}{\mathbf Q}
\renewcommand{\AA}{\mathbb A}
\newcommand{\DD}{\mathcal D}
\newcommand{\GG}{\mathcal G}
\newcommand{\LL}{\mathcal L}
\newcommand{\OO}{\mathcal O}
\renewcommand{\P}{\mathbb P}
\newcommand{\MM}{\mathbb M}
\newcommand{\FFF}{\mathcal F}
\newcommand{\OOO}{\Omega}
\newcommand{\<}{\langle}
\renewcommand{\>}{\rangle}
\newcounter{RomanNumber}
\newcommand{\MyRoman}[1]{\setcounter{RomanNumber}{#1}
{\rm\Roman{RomanNumber}}}
\renewcommand{\tr}{{\rm Tr}}
\newtheorem{theorem}{Theorem}[section]
\newtheorem{lm}{Lemma}[section]
\newtheorem{prop}{Proposition}[section]
\newtheorem{cor}{Corollary}[section]
\newtheorem{rk}{Remark}[section]
\begin{document}
\title{Energy-preserving exponential integrable numerical method for stochastic cubic wave equation with additive noise}

       \author{
       Jianbo Cui\footnotemark[1],
        Jialin Hong\footnotemark[2],
         Lihai Ji\footnotemark[3]
         and
        Liying Sun\footnotemark[4],\\
       {\small
       \footnotemark[1]~School of Mathematics, Georgia Institute of Technology, Atlanta, GA 30332, USA.}\\
        {\small\footnotemark[2]~\footnotemark[4]~Institute of Computational Mathematics and Scientific/Engineering Computing,}
        \\{\small Academy of Mathematics and Systems Science, Chinese Academy of Sciences, }
        {\small Beijing 100190, P.R.China }\\
        {\small\footnotemark[3]~Institute of Applied Physics  and Computational Mathematics, Beijing, 100094, China.}
        }
       \maketitle
       \footnotetext{\footnotemark[2]\footnotemark[4] Authors are supported by National Natural Science Foundation of China (NO. 91130003, NO. 11021101 and NO. 11290142).}
       \footnotetext{\footnotemark[3]The author is funded by National Natural Science Foundation of China (No. 11601032)}
        \footnotetext{\footnotemark[4]Corresponding author: liyingsun@lsec.cc.ac.cn}

\maketitle

       \begin{abstract}
          {\rm\small In this paper, we present an energy-preserving exponentially integrable numerical method for stochastic wave equation with cubic nonlinearity and additive noise.
          	We first apply the spectral Galerkin method to discretizing the original equation and show that this spatial discretization possesses an energy evolution law and certain exponential integrability property.
          	Then the exponential integrability property of the exact solution is deduced by proving the strong convergence of the semi-discretization.
          	To propose a full discrete numerical method which could inherit both the energy evolution law and the exponential integrability, we use the splitting technique and averaged vector field method in the temporal direction. 
          	Combining these structure-preserving properties with regularity estimates of the exact and the numerical solutions, we obtain the strong convergence rate of the numerical method.
          	Numerical experiments coincide with these theoretical results.}\\

\textbf{AMS subject classification: }{\rm\small60H08, 60H35, 65C30.}\\
%\textbf{PACS: 02.60.Lj}

\textbf{Key Words: }{\rm\small} stochastic wave equation, cubic nonlinearity, strong convergence, spectral Galerkin method, exponential integrability property, energy evolution law
\end{abstract}
% REQUIRED

\section{Introduction}
\label{int}
As a kind of commonly observed physical phenomena, the wave motions are usually described by stochastic partial differential equations (SPDEs) of hyperbolic type.
The main objective of this paper is to numerically investigate the following stochastic wave equation with cubic nonlinearity, driven by an additive noise
\begin{equation}
\left\{
\begin{aligned}
\label{mod;swe}
&du(t)=v(t)dt,\quad\quad \;\; &\text{in} \;\; \OO \times (0,T],\\
&dv(t)=\Lambda u(t)dt-f(u(t))dt+dW(t), \quad\quad \;\; &\text{in} \;\; \OO \times (0,T], \\
&u(0)=u_0, \quad v(0)=v_0, \quad \;\; &\text{in} \;\; \OO,
\end{aligned}
\right.
\end{equation}
where $\OO=(0,1)^{d}$ with $d\le 2$, $T\in(0,\infty)$ and $u_0,v_0:\OO\rightarrow \R$ are deterministic.
Assume that $\Lambda=\sum\limits_{i=1}^d\frac{\partial^2}{\partial x_i^2}$ is the Laplace operator with homogeneous Dirichlet boundary condition, and the nonlinear term $f(u)=c_3u^3+\cdots+c_1u+c_0$ is assumed to be a polynomial % with odd degree $\rho\leq 3$ and 
with $c_3>0$.
Throughout this paper, $W$ is an $L^2:=L^2(\OO;\R)$-valued $\Q$-Wiener process with respect to a filtered probability space $(\Omega, \FFF, \{\FFF_t\}_{t\geq 0}, \P)$, i.e., there exists an orthonormal basis $\{e_k\}_{k\in \N^+} $ of $L^2$ and a sequence of mutually independent real-valued Brownian motions $\{\beta_k\}_{k\in \N^+} $ such that $W(x,t)=\sum_{k\in \N^+} \Q^{\frac12}e_k(x)\beta_k(t)$
with $\Q$ being a symmetric, positive definite and finite trace operator.
%The stochastic wave equation with Lipschitz and regular coefficients has been systematic investigated theoretically and numerically (see for example \cite{ACLW16,CQ16,DS09,JJW18,KLS10,PZ00,QW19} and references therein).
%However, in many practical applications the drift coefficient $f$ of the stochastic wave equation \eqref{mod;swe} has a specific structure, e.g. polynomial, but fails to satisfy a global Lipschitz condition, while in the vast majority of research articles concern the numerical approximation of SPDEs a global Lipschitz assumption is used.
%Since exact solutions are rarely known, one needs approximations of solutions of such SPDEs, \cite{S08} presents some nonstandard partial-implicit midpoint-type difference methods which can control its energy functional in a dynamically consistent fashion for one-dimensional stochastic wave equation with cubic power law. 
%In the literature in \cite{MM01,C02,S11} and \cite{S08} the well-posedness and numerical approximations without a global Lipschitz assumption are obtained.
%For stochastic wave equation \eqref{mod;swe}, \cite{C02} shows the existence and uniqueness of solution.
For the well-posedness of stochastic wave equation, we refer to 
\cite{C02,Chow06} for the existence and uniqueness of the mild solution with more general polynomial drift coefficients, and to \cite{DS09} with more general driving noises. 
The evolution of the energy, as an intrinsic quantity of the wave equation, is studied in \cite{C02}.
As another important property (see e.g., \cite{AMS94}), the exponential integrability property of the solution of the stochastic wave equation has not been well understood. 
We are only aware that the authors in \cite{CHJ13}
prove the exponential integrability property of the spectral Galerkin approximation of $2$-dimensional  stochastic wave equation.
Utilizing the uniform exponential integrability property and regularity estimate of the spectral Galerkin method applied to \eqref{mod;swe}, in this paper we prove that the exact solution  admits the following exponential integrability property
\begin{align*}
\E\left(\exp\left(\int_0^T c\|u(s)\|_{L^6}^2ds\right)\right)\le C,
\end{align*}
where $c\in \mathbb R$, $T\in (0,\infty)$, $C:=C(u_0,v_0,\Q,T,c,d)$ and $d\leq 2.$

Finding solutions numerically of stochastic wave equation  is an active ongoing research area. For instance,
the authors in 
\cite{WGT14} present higher order strong approximations consisting of the Galerkin approximation in space combined with the trigonometric time integrator  for stochastic wave equation with regular and Lipschitz coefficients driven by additive space-time white noise.
The authors in 
\cite{ACLW16} obtain the strong convergence rate of a full discrete scheme for stochastic wave equations with regular and Lipschitz coefficients, and they provide an almost trace formula for their proposed full discretization.
With regard to the weak convergence analysis, we refer to e.g.,  
\cite{KLL13} for  finite element approximations of linear stochastic wave equation with additive noise, and to
\cite{JJT15} for spatial spectral Galerkin approximations of stochastic wave equation with multiplicative noise.
For the stochastic wave equation with cubic nonlinearity,
we are only aware that  some nonstandard partial-implicit midpoint-type difference method which can control its energy functional in a dynamically consistent fashion is proposed in \cite{S08} with $d=1$.

From the numerical viewpoint, it is natural and important to design numerical methods to inherit both the energy evolution law and exponential integrability property of the original system. 
For instance,
with regard to the energy-preserving numerical method,
we refer to e.g., \cite{CH17} for the time splitting method of the stochastic nonlinear Schr\"odinger equation and  to \cite{CLS13} for the exponential integrator of the stochastic linear wave equation. 
With regard to the exponentially integrable numerical method, we 
refer to e.g., \cite{BCH18} for an explicit splitting scheme of the stochastic parabolic equation,   to \cite{HJ14,HJW18}
for some stopped increment-tamed Euler approximations of stochastic differential equations, and to 
\cite{CHL16b,CHLZ17b} for the finite difference method and the splitting full discrete scheme of the stochastic nonlinear Schr\"odinger equation.
However, up to now, there has been no result on  full discretizations which could inherit both the energy evolution law and the exponential integrability for stochastic nonlinear wave equation with non-globally Lipschitz coefficients.

In the present paper,  we propose a strategy to design 
numerical methods  preserving both the energy evolution law and exponential integrability property. The idea is  based on splitting the original equation in temporal direction into a deterministic Hamiltonian system and a stochastic system.
We combine the splitting technique with the averaged vector field (AVF) method, and apply  spectral Galerkin method in spatial direction to present the numerical method
\begin{equation*}
\begin{split}
u^N_{m+1}=&u^N_{m}+h\frac{v^N_{m}+\bar v^N_{m+1}}{2},\\
\bar v^N_{m+1}=&v^N_m+h\Lambda_N \frac{u^N_{m}+ u^N_{m+1}}{2}-hP_N\left(\int_0^1f(u^N_m+\theta(u^N_{m+1}-u^N_m))d\theta \right),\\
v^N_{m+1}=&\bar v^N_{m+1}+P_N\delta W_{m},
\end{split}
\end{equation*}
where $N\in \mathbb N^+=\{1,2,\cdots\}$, $h=T/M$ with $M\in \N^+$ is the time step-size, $m\in \Z_M:=\{0,1,\cdots,M-1\}.$
Here $P_N$ is the spectral Galerkin projection operator defined in \eqref{def:pjp}, and $\delta W_{m}$ is the increment of the Wiener process defined in \eqref{sche;sbes}. 
The averaged vector field method (AVF) is one kind of the discrete gradient approach to construct numerical schemes with conservation properties, which has been discussed in the deterministic wave equation (see \cite{Fur01}). 
We prove that the proposed numerical method preserves the energy evolution law and the  exponential integrability property
\begin{align*}
\E
\left(\exp\left(ch\sum\limits_{i=1}^M\|u^N_i\|_{L^6}^2
\right)
\right)
\leq C,
\end{align*}
where $c\in \mathbb R$, $C:=C(u_0,v_0,\Q,T,c,d)$ and $d\leq 2.$
%with the positive constant $C$ depending on $u_0,v_0,$ $\Q$, $T$, $d$ and $c$. 
Based on the above structure-preserving properties, we show that 
%This difficulty leads that the classical method to derive the strong convergence rate, that is, taking expectation first and then applying Gronwall inequality, is not available% and the H\"{o}lder continuity of the semi-discretization in $L^p(\OOO;\H)$-norm  .
the strong convergence orders of the numerical method are $\beta/2$ in spatial direction and $\min(\beta,2)/2$ in temporal direction for the case that $d=1$, $\beta\ge 1$ or that $d= 2$, $\beta= 2$,  where $\beta$ is the regularity index of the $\mathbf Q$-Wiener process $W.$ 
%{\color{red} Moreover, the existence and uniqueness of numerical solution, as well as that how two different fixed point iterations closed to the fixed point are discussed in the appendix.}
To the best of our knowledge, this is the first result regarding both the exponential integrability and the strong convergence rate of full discretizations for stochastic wave equations with cubic nonlinearity.

The rest of this paper is organized as follows.
Section \ref{sec2} presents an abstract formulation of the stochastic wave equation, and introduces some properties of the corresponding group.
In Section \ref{sec3}, the regularity estimate and exponential integrability property of the mild solution of the spectral Galerkin discretization are studied.
The analysis of strong convergence for the spectral Galerkin discretization is also presented.
Section \ref{sec4} is devoted to constructing the numerical method which preserves the energy evolution law and exponential integrability property, and deducing its $L^p(\OOO;\H)$ error estimate.
Numerical experiments are carried out in Section \ref{sec5} to verify theoretical results.

\section{Preliminary and frame work}
\label{sec2}
In this section, we first set forth an abstract formulation of \eqref{mod;swe} for the stochastic wave equation, and introduce some properties of the unitary group generated by the dominant operator.
Throughout this paper, the constant $C$ may be different from line to line but never depending on $N$ and $h$.

Assume that the eigenvalues $0< \lambda_1\leq\lambda_2\leq\cdots$ and that  the corresponding eigenfunctions $\{e_i\}_{i=1}^\infty$ of the operator $-\Lambda$, i.e., with
$-\Lambda e_i=\lambda_ie_i,$ $i\in\mathbb N^{+}$, form an orthonormal basis in $L^2$.
Define the interpolation space $\dot\H^r:=\DD((-\Lambda)^\frac r2)$ for $r\in \R$  equipped with the inner product
$
\<x,y\>_{\dot{\H}^r}=\left\<(-\Lambda)^\frac r2x,(-\Lambda)^\frac r2y\right\>_{L^2}
=\sum\limits_{i=1}^\infty\lambda_i^r\<x,e_i\>_{L^2}\<y,e_i\>_{L^2}
$
and the corresponding norm $\|x\|_{\dot{\H}^r}:=\<x,x\>_{\dot{\H}^r}^{1/2}$.
Furthermore, we introduce the product space
$
\H^r:=\dot\H^r\times \dot\H^{r-1},$ $r\in \R,
$
endowed with the inner product
$
\<X_1,X_2\>_{\H^r}=\<x_1,x_2\>_{\dot\H^r}+\<y_1,y_2\>_{\dot\H^{r-1}}
$
for any $X_1=(x_1,y_1)^\top$ and $X_2=(x_2,y_2)^\top,$ and the corresponding norm 
$
\|X\|_{\H^r}:=\<X,X\>_{\H^r}^{1/2}=\big(\|x\|_{\dot{\mathbb H}^r}^2+\|y\|_{\dot{\mathbb H}^{r-1}}^2\big)^{1/2} $ for $X=(x,y)^\top.
$

Given two separable Hilbert spaces $(\mathcal H, \|\cdot \|_{\mathcal H})$ and $(\widetilde  H,\|\cdot \|_{\widetilde H})$, 
$\LL(\mathcal H, \widetilde H)$  and $\LL_1(\mathcal H, \widetilde H)$ are the Banach spaces of all linear bounded operators 
and  the nuclear operators from $\mathcal H$ to $\widetilde H$, respectively. 
The trace of an operator $\mathcal T\in \LL_1(\mathcal H):= \LL_1(\mathcal H,\mathcal H)$
is $\tr[\mathcal T]=\sum_{k\in \N^+}\<\mathcal Tf_k,f_k\>_{\mathcal H}$, where $\{f_k\}_{k\in \N^+}$ is any orthonormal basis of $\mathcal H$.
In particular, if $\mathcal T\ge 0$, $\text{Tr}(\mathcal T)=\|\mathcal T\|_{\mathcal L_1( \mathcal H)}$.
Denote by $\LL_2(\mathcal H,\widetilde H)$ the space 
of Hilbert--Schmidt operators from $\mathcal H$ into $\widetilde H$, equipped with  the norm $\|\cdot\|_{\LL_2(\mathcal H,\widetilde H)}=(\sum_{k\in \N^+}\|\cdot f_k\|^2_{\widetilde H})^{\frac{1}{2}}$.
For convenience, we denote $\LL_2(\mathcal H):=\LL_2(\mathcal H,\mathcal H)$
and $L^p:=L^p(\mathcal O,\mathbb R), p\ge 1.$

Denote $X=(u,v)^\top$.  The abstract form of \eqref{mod;swe} is
\begin{equation}
\begin{split}
\label{mod;swe1}
dX(t)&=AX(t)dt+\F(X(t))dt+\G dW(t),\quad t\in(0,T],\\
X(0)&=X_0,
\end{split}
\end{equation}
where
\begin{align*}
X_0=
\begin{bmatrix}
u_0\\
v_0
\end{bmatrix},\quad
A=
\begin{bmatrix}
0 & I\\
\Lambda & 0
\end{bmatrix},\quad
\F(X(t))=
\begin{bmatrix}
0\\
-f(u(t))
\end{bmatrix},\quad
\G=
\begin{bmatrix}
0\\
I
\end{bmatrix}.
\end{align*}
Here and below we denote $I$ by the identity operator defined in  $L^2$. 
Moreover, we define the domain of operator $A$ by
\begin{align*}
\DD(A)=\left\{X\in\H:
AX=\begin{bmatrix}
v\\
\Lambda u
\end{bmatrix}
\in\H:=L^2\times \dot\H^{-1}\right\}=\dot\H^1\times L^2,
\end{align*}
then the operator $A$ generates a unitary group $E(t),$ $t\in \R,$ on $\H$, given by
\begin{align*}
E(t)=\exp (tA)=
\begin{bmatrix}
C(t)& (-\Lambda)^{-\frac 12}S(t)\\
-(-\Lambda)^{\frac 12}S(t) & C(t)
\end{bmatrix},
\end{align*}
where $C(t)=\cos(t(-\Lambda)^{\frac 12})$ and $S(t)=\sin(t(-\Lambda)^{\frac 12})$ are the cosine and sine operators, respectively.

Throughout this article, we will consider the mild solution of \eqref{mod;swe1}, that is,
\begin{align}
\label{sol;swe}
X(t)=E(t)X_0+\int_0^tE(t-s)\mathbb F(X(s))ds+\int_0^t E(t-s)\mathbb G dW(s),\quad a.s.
\end{align}
We refer to \cite{C02,Chow06} for the well-posedness of the mild solution for the stochastic wave equation. 
The following lemma concerns with the temporal H\"{o}lder continuity of both sine and cosine operators, which has been discussed, for example, in \cite{ACLW16}.

\begin{lm}
	\label{lm;prpE_n}
	For $r\in[0,1],$ there exists a positive constant $C':=C'(r)$ such that
	\begin{align*}
	&\|(S(t)-S(s))(-\Lambda)^{-\frac r2}\|_{\LL(L^2)}\leq C'(t-s)^r,\\
	&\|(C(t)-C(s))(-\Lambda)^{-\frac r2}\|_{\LL(L^2)}\leq C'(t-s)^r
	\end{align*}
	and
	$
	\|(E(t)-E(s))X\|\leq C'(t-s)^r\|X\|_{\H^r}
	$ 
	for all $t\geq s\geq 0.$
\end{lm}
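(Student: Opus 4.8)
The plan is to exploit the fact that $S(t)$, $C(t)$ and every fractional power $(-\Lambda)^{\alpha}$ are simultaneously diagonalized by the orthonormal eigenbasis $\{e_i\}$ of $-\Lambda$. On the one-dimensional eigenspace spanned by $e_i$, with eigenvalue $\lambda_i$ and writing $\mu_i:=\lambda_i^{1/2}$, the operator $(S(t)-S(s))(-\Lambda)^{-r/2}$ acts as multiplication by the scalar $(\sin(t\mu_i)-\sin(s\mu_i))\mu_i^{-r}$, and likewise for the cosine. Since the operator norm on $\LL(L^2)$ of an operator diagonal in $\{e_i\}$ equals the supremum of the moduli of its eigenvalues, the first two displayed inequalities reduce to the single scalar estimate
\begin{align*}
\sup_{\mu>0}\,|\sin(t\mu)-\sin(s\mu)|\,\mu^{-r}\leq C'(t-s)^r,
\end{align*}
together with its cosine analogue.

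For this scalar bound, which is the crux of the lemma, I would combine two elementary facts: the Lipschitz bound $|\sin(t\mu)-\sin(s\mu)|\leq \mu(t-s)$ and the trivial bound $|\sin(t\mu)-\sin(s\mu)|\leq 2$, that is, $|\sin(t\mu)-\sin(s\mu)|\leq \min\{2,\mu(t-s)\}$. The key device is the interpolation inequality $\min\{\alpha,\beta\}\leq \alpha^{1-r}\beta^{r}$, valid for $\alpha,\beta\geq 0$ and $r\in[0,1]$, applied with $\alpha=2$ and $\beta=\mu(t-s)$, which yields
\begin{align*}
|\sin(t\mu)-\sin(s\mu)|\,\mu^{-r}\leq 2^{1-r}\big(\mu(t-s)\big)^{r}\mu^{-r}=2^{1-r}(t-s)^{r}.
\end{align*}
The identical computation, using $|\cos(t\mu)-\cos(s\mu)|\leq\min\{2,\mu(t-s)\}$, gives the cosine estimate. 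This settles the first two inequalities with $C'=2^{1-r}$, and I expect this interpolation step to be the only genuinely nontrivial point; everything else is bookkeeping.

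For the third inequality, with $\|\cdot\|=\|\cdot\|_{\H^0}$ on $\H=L^2\times\dot\H^{-1}$ and $X=(x,y)^\top$, I would write out the two components of $(E(t)-E(s))X$ from the matrix form of $E$: the first component is $(C(t)-C(s))x+(-\Lambda)^{-\frac12}(S(t)-S(s))y$ and the second is $-(-\Lambda)^{\frac12}(S(t)-S(s))x+(C(t)-C(s))y$. Measuring the first in $L^2$ and the second in $\dot\H^{-1}$, which after applying $(-\Lambda)^{-\frac12}$ cancels the leading $(-\Lambda)^{\frac12}$, and using that all operators in sight are functions of $-\Lambda$ and hence commute, each of the four resulting terms takes the form of an operator from the first two estimates composed with $(-\Lambda)^{-r/2}$ acting on either $(-\Lambda)^{r/2}x$ or $(-\Lambda)^{(r-1)/2}y$. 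Applying the already-established operator-norm bounds term by term controls each piece by $C'(t-s)^r\|x\|_{\dot\H^r}$ or $C'(t-s)^r\|y\|_{\dot\H^{r-1}}$; combining these with $(a+b)^2\leq 2(a^2+b^2)$ and absorbing numerical factors into a relabeled $C'$ gives $\|(E(t)-E(s))X\|\leq C'(t-s)^r\|X\|_{\H^r}$. The only thing to watch is the correct matching of fractional powers so that $x$ is measured in $\dot\H^r$ and $y$ in $\dot\H^{r-1}$, which is precisely arranged by the commutation just described.
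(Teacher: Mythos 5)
Your proof is correct. Note that the paper itself does not prove this lemma at all --- it states it and defers to the cited reference on full discretizations of stochastic wave equations --- so there is no in-paper argument to compare against; your spectral argument is exactly the standard proof found in that literature. The three ingredients you use are all sound: the operator norm of an operator diagonal in the eigenbasis $\{e_i\}$ is the supremum of the moduli of its eigenvalues; the scalar bound $|\sin(t\mu)-\sin(s\mu)|\leq\min\{2,\mu(t-s)\}$ combined with $\min\{\alpha,\beta\}\leq\alpha^{1-r}\beta^{r}$ gives the H\"older-in-time estimate with $C'=2^{1-r}$ (and likewise for the cosine); and the componentwise reduction of $\|(E(t)-E(s))X\|_{\H}$ to the two operator bounds works precisely because $C(t)$, $S(t)$ and the fractional powers $(-\Lambda)^{\alpha}$ all commute, so the powers can be rearranged to measure $x$ in $\dot\H^{r}$ and $y$ in $\dot\H^{r-1}$ as you indicate.
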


\begin{lm}
	\label{lm;2}
	For any $t\in\R,$ 
	$C(t)$ and $S(t)$ satisfy a trigonometric identity in the sense that
	$
	\|S(t)x\|_{L^2}^2+\|C(t)x\|_{L^2}^2=\|x\|_{L^2}^2$ for $x\in L^2.$
\end{lm}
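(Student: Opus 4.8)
The plan is to reduce this operator identity to a scalar trigonometric identity by diagonalizing $C(t)$ and $S(t)$ in the orthonormal eigenbasis $\{e_i\}_{i=1}^\infty$ of $-\Lambda$. Since $(-\Lambda)^{\frac12}e_i=\sqrt{\lambda_i}\,e_i$, the functional calculus gives $C(t)e_i=\cos(t\sqrt{\lambda_i})e_i$ and $S(t)e_i=\sin(t\sqrt{\lambda_i})e_i$, so that both operators act on each eigenmode as multiplication by a bounded scalar.

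First I would write any $x\in L^2$ as $x=\sum_{i=1}^\infty x_ie_i$ with $x_i=\<x,e_i\>_{L^2}$, so that $\|x\|_{L^2}^2=\sum_{i=1}^\infty x_i^2$ by Parseval's identity. Applying the operators termwise yields $C(t)x=\sum_{i=1}^\infty\cos(t\sqrt{\lambda_i})x_ie_i$ and $S(t)x=\sum_{i=1}^\infty\sin(t\sqrt{\lambda_i})x_ie_i$; these series converge in $L^2$ because the bounds $|\cos|,|\sin|\le 1$ already show $C(t),S(t)\in\LL(L^2)$. Using orthonormality of $\{e_i\}$ once more, the squared norms separate over modes as $\|C(t)x\|_{L^2}^2=\sum_{i=1}^\infty\cos^2(t\sqrt{\lambda_i})x_i^2$ and $\|S(t)x\|_{L^2}^2=\sum_{i=1}^\infty\sin^2(t\sqrt{\lambda_i})x_i^2$.

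The conclusion then follows by adding these two expressions and invoking the pointwise identity $\cos^2\theta+\sin^2\theta=1$ with $\theta=t\sqrt{\lambda_i}$ in each term, which collapses the combined series to $\sum_{i=1}^\infty x_i^2=\|x\|_{L^2}^2$. There is no substantive obstacle here: the only point requiring any care is the interchange of the squared-norm computation with the infinite sum, but this is immediate from Parseval's identity together with the uniform boundedness of the trigonometric factors. Thus the statement is essentially a one-line spectral computation.
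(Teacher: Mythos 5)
Your proof is correct: the paper states Lemma \ref{lm;2} without any proof (it is treated as a standard fact about the sine and cosine operators defined through the spectral calculus of $-\Lambda$), and your eigenbasis diagonalization combined with Parseval's identity and $\cos^2\theta+\sin^2\theta=1$ is precisely the standard argument that the paper leaves implicit. Nothing is missing; the convergence and interchange issues you flag are indeed handled by the uniform bound $|\cos|,|\sin|\le 1$ and orthonormality of $\{e_i\}$.
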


Based on the above trigonometric identity, it  can be obtained that $\|E(t)\|_{\LL(\H)}=1,$  $t\in\R.$ 
Denote the potential functional by $F$ such that $\frac {\partial F}{\partial u}(u)=f(u)$. 
Then the functional $F: L^4 \to \mathbb R$ can be chosen such that 
\begin{align}\label{equ}
a_1\|u\|_{L^4}^4-b_1 \le F(u) \le a_2\|u\|_{L^4}^4+b_2
\end{align} 
for some positive constants $a_1,a_2,b_1,b_2$.
Similar to \cite{C02}, using finite  dimensional approximation and It\^o's formula to the Lyapunov energy functional $V_1:\mathbb{H}^1\rightarrow \mathbb{R}$
\begin{align}\label{Lya-V1}
V_1(u,v)=\frac 12\|u\|^2_{\dot\H^1}+\frac 12\|v\|^2_{L^2}
+F(u)+C_1,\quad C_1\ge b_1,
\end{align}
we have the following energy evolution law of \eqref{mod;swe} by taking the limit.

\begin{lm}
	\label{lm;eel}
	Assume that $X_0\in \H^1$ and $\Q^{\frac 12}\in \mathcal L_2(L^2).$
	Then the stochastic wave equation  { \eqref{mod;swe}} admits the energy evolution law
	\begin{align*}
	\E(V_1(u(t),v(t)))
	=&V_1(u_0, v_0)
	+\frac 12{\rm{Tr}}\left(\Q\right)t, \quad t\in \mathbb R^+.
	\end{align*}
\end{lm}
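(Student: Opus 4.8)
The plan is to apply It\^o's formula to the energy functional $V_1$ along the solution $X=(u,v)^\top$ of \eqref{mod;swe1}. Because the cubic drift $f$ is not globally Lipschitz and the ambient space is infinite dimensional, It\^o's formula cannot be invoked directly on $V_1$; instead I would first carry out the computation for the spectral Galerkin approximation $X^N=(u^N,v^N)^\top$, whose dynamics live in the finite dimensional space $P_N L^2$ where It\^o's formula is rigorous, and then pass to the limit $N\to\infty$.

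For the Galerkin system the $L^2$-gradient of $V_1$ is $D_uV_1=-\Lambda u^N+f(u^N)$ and $D_vV_1=v^N$, while the only second-derivative block that couples to the noise is $D^2_{vv}V_1=I$. Writing the projected drift as $(v^N,\Lambda_N u^N-P_Nf(u^N))^\top$ and pairing it with $DV_1$, the deterministic contribution is $\<-\Lambda_N u^N+P_Nf(u^N),v^N\>+\<v^N,\Lambda_N u^N-P_Nf(u^N)\>$, which vanishes identically by the self-adjointness of $\Lambda_N$ and the symmetry of the real inner product. This cancellation is precisely the Hamiltonian (skew-symmetric) structure of the system, and it is what makes the deterministic part contribute nothing to the evolution of $\E V_1$.

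The surviving terms are the stochastic integral $\int_0^t\<v^N(s),P_N\,dW(s)\>$, a martingale with zero expectation, and the It\^o correction $\frac12\tr(P_N\Q P_N)\,t$ arising from $D^2_{vv}V_1=I$ contracted against the projected covariance $P_N\Q P_N$. Taking expectations yields $\E(V_1(u^N(t),v^N(t)))=V_1(P_Nu_0,P_Nv_0)+\frac12\tr(P_N\Q P_N)\,t$ for the semidiscretization.

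Finally I would let $N\to\infty$. The initial term converges since $P_Nu_0\to u_0$ in $\dot\H^1$ and $P_Nv_0\to v_0$ in $L^2$, and $\tr(P_N\Q P_N)\to\tr(\Q)$ by monotone convergence. The delicate point, and the main obstacle, is passing the limit inside $\E(V_1(u^N(t),v^N(t)))$: whereas the quadratic terms $\frac12\|u^N\|_{\dot\H^1}^2+\frac12\|v^N\|_{L^2}^2$ are controlled by the $\H^1$-strong convergence of the Galerkin scheme established in Section \ref{sec3}, the potential term $F(u^N)$ grows like $\|u^N\|_{L^4}^4$ by \eqref{equ} and therefore requires both the strong convergence $u^N(t)\to u(t)$ in $L^4$ and a uniform-in-$N$ bound on $\E\|u^N(t)\|_{L^4}^4$ to secure uniform integrability. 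These moment bounds follow from the energy law of the Galerkin system together with the regularity estimates of Section \ref{sec3}; combining convergence in probability with uniform integrability then transfers the identity to the limit and gives the claimed energy evolution law.
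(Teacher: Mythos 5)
Your proposal is correct and follows essentially the same route as the paper: the authors' own ``proof'' of Lemma~\ref{lm;eel} is precisely the one-sentence sketch preceding it (``using finite dimensional approximation and It\^o's formula to the Lyapunov energy functional $V_1$ \dots by taking the limit,'' following \cite{C02}), and the Galerkin-level identity you derive is exactly the paper's Proposition on the energy evolution law of $X^N$, with the trace term $\tr(P_N\Q P_N)=\tr\bigl((P_N\Q^{\frac12})(P_N\Q^{\frac12})^{\ast}\bigr)$. Your explicit attention to uniform integrability of the quartic term $F(u^N)$ when passing to the limit is a point the paper leaves implicit, so your write-up is, if anything, more detailed than the original.
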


\section{Exponential integrability property of stochastic wave equation}
\label{sec3}
This section is devoted to analyzing the spatial spectral Galerkin method of  \eqref{mod;swe1}. 
We present the existence, uniqueness and regularity estimate for the solution to the spectral Galerkin discretization, including the uniform boundedness of the solution in $L^{p}(\Omega;\mathcal C([0,T];\mathbb H^{\beta}))$-norm and H\"{o}lder continuity of the solution in $L^p(\OOO;\H)$-norm. 
By giving the strong convergence of the spectral Galerkin method, we show the exponential integrability property of the exact solution of \eqref{mod;swe}.

\subsection{Spectral Galerkin method}
In this subsection, we study the spectral Galerkin method for stochastic wave equation \eqref{mod;swe1}.
The spectral Galerkin method has been used to discretize SPDEs in spatial direction (see e.g., \cite{CHLZ17b,JJT15} and references therein).
For the considered equation and $N\in\N^+,$ we define a finite dimensional subspace $U_N$ of $L^2$ spanned by $\{e_1,e_2,\cdots,e_N\}$,
and the projection operator
$P_N:\dot\H^r\rightarrow U_N$ by
\begin{align}
\label{def:pjp}
P_N\zeta=\sum\limits_{i=1}^N\<\zeta,e_i\>_{L^2}e_i,\quad \forall ~\zeta\in \dot\H^r,\quad r\geq -1,
\end{align}
which satisfies  $
\|P_N\|_{\mathcal L(L^2)}\leq 1.$ %\quad ,\quad \forall~\zeta\in\dot\H^r,\quad r\geq 2.
%We emphasize that $U_N$ here is chosen as the linear space spanned by the $N$ first eigenvectors of $\Lambda.$
Define $\Lambda_N:U_N\rightarrow U_N$ by
\begin{align}\label{la_operator}
\Lambda_N\zeta%=%\Lambda P_N\zeta=P_N\Lambda\zeta
=\Lambda P_N\zeta=P_N\Lambda\zeta=-\sum\limits_{i=1}^N\lambda_i\<\zeta,e_i\>_{L^2}e_i,\quad \forall~ \zeta\in U_N.
\end{align}
By denoting $X^N=(u^N,v^N)^\top$, the spectral Galerkin method applied to \eqref{mod;swe1} yields
\begin{equation}
\begin{split}
\label{mod;sga}
dX^N(t)&=A_NX^N(t)dt+\F_N(X^N(t))dt+\G_N dW(t),\quad t\in(0,T],\\
X^N(0)&=X_0^N,
\end{split}
\end{equation}
where
\begin{align*}
X^N_0=
\begin{bmatrix}
u_0^N\\
v_0^N
\end{bmatrix},\quad
A_N=
\begin{bmatrix}
0 & I\\
\Lambda_N & 0
\end{bmatrix},\quad
\F_N(X^N)=
\begin{bmatrix}
0\\
-P_N\left(f(u^N)\right)
\end{bmatrix},\quad
\G_N=
\begin{bmatrix}
0\\
P_N
\end{bmatrix}
\end{align*}
with  $u_0^N=P_Nu_0,v_0^N=P_Nv_0.$ 
Similarly, the discrete operator $A_N$ 	generates a unitary group
\begin{align*}
E_N(t)=\exp (tA_N)=
\begin{bmatrix}
C_N(t)& (-\Lambda_N)^{-\frac 12}S_N(t)\\
-(-\Lambda_N)^{\frac 12}S_N(t) & C_N(t)
\end{bmatrix},\quad t\in \R,
\end{align*}
where $C_N(t)=\cos(t(-\Lambda_N)^{\frac 12})$ and $S_N(t)=\sin(t(-\Lambda_N)^{\frac 12})$ are the discrete cosine and sine operators defined in $U_N,$ respectively.
It can be verified straightforwardly that
\begin{align*}
C_N(t)P_N\zeta=C(t)P_N\zeta=P_NC(t)\zeta,\quad
S_N(t)P_N\zeta=S(t)P_N\zeta=P_NS(t)\zeta
\end{align*}
for any $\zeta\in\dot\H^r,$ $r\geq -1.$

Thanks to the Lyapunov function $V_1$ in \eqref{Lya-V1},  we are able to show the existence and uniqueness of the mild solution of \eqref{mod;sga} and a priori estimation on $X^N(t)$.
Since the coefficient in \eqref{mod;sga} is locally Lipschitz and $A_N$ is a bounded operator in $U_N\times U_N$, the local existence of the unique mild solution is obtained by using the Banach fixed-point theorem or Picard iterations
under the $\mathcal C([0,T];L^p(\Omega;\H^1))$-norm for $p\ge2.$
To extend the local solution to a global solution, similar to  the proof of \cite[Theorem 4.2]{C02}, we show a priori estimate 
in $L^{p}(\Omega;\mathcal C([0,T];\H^1))$ by applying It\^o's formula to $\left(V_1(u^N,v^N)\right)^p,$ $p\in [2,\infty)$ and using the unitary property of $E_N(t)$. Finally, we obtain the following properties of $X^N$.

%Similar techniques in \cite{C02} yield the following lemma.
\begin{lm}\label{h1}
	Assume that $X_0\in \H^1$, $T>0$ and $\Q^{\frac 12}\in \mathcal L_2(L^2).$
	Then the spectral Galerkin discretization \eqref{mod;sga} has a unique mild solution given by
	\begin{align}
	\label{sol;sga}
	X^N(t)=E_N(t)X^N_0+\int_0^tE_N(t-s)\F_N(X^N(s))ds+
	\int_0^tE_N(t-s)\G_NdW(s)
	\end{align}
	for $t\in[0,T].$
	Moreover, for $p\ge 2$, there exists a positive constant $C:=C(X_0,T,\Q,p)$ such that
	\begin{align}
	\label{spec;H1}
	\|X^N\|_{L^{p}(\Omega;\mathcal C([0,T];\H^1))}\leq
	C.
	\end{align}
\end{lm}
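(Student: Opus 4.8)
The plan is to establish local well-posedness by a contraction argument and then promote it to a global solution together with \eqref{spec;H1} through an energy estimate built on the Lyapunov functional $V_1$. Because \eqref{mod;sga} evolves in the finite-dimensional space $U_N\times U_N$, the operator $A_N$ is bounded, $E_N(t)$ is a group with $\|E_N(t)\|_{\LL(\H)}=1$ by Lemma \ref{lm;2}, and all norms on $U_N$ are equivalent, so the cubic map $\F_N$ is locally Lipschitz. Viewing the mild formulation \eqref{sol;sga} as a fixed-point equation on $\mathcal C([0,\tau];L^p(\OOO;\H^1))$ and using $\|P_N\|_{\LL(L^2)}\le1$ together with the local Lipschitz bound for $f$, the associated map is a contraction for $\tau$ small; the stochastic convolution is well defined in $L^p(\OOO;\H^1)$ thanks to the It\^o isometry and $\Q^{1/2}\in\LL_2(L^2)$. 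This yields a unique local mild solution.

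The crux is the a priori estimate, which I would derive by applying It\^o's formula to $(V_1(u^N,v^N))^p$. The decisive structural fact is that the deterministic part of \eqref{mod;sga} is Hamiltonian with Hamiltonian $V_1$ and therefore does not contribute to $dV_1$: differentiating $V_1$ along the drift $(v^N,\Lambda_N u^N-P_Nf(u^N))^\top$ gives
\begin{align*}
\<u^N,v^N\>_{\dot\H^1}+\<v^N,\Lambda_N u^N-P_Nf(u^N)\>_{L^2}+\<f(u^N),v^N\>_{L^2}=0,
\end{align*}
where I used $\<u^N,v^N\>_{\dot\H^1}=\<-\Lambda_N u^N,v^N\>_{L^2}$ and, since $v^N\in U_N$, the identity $\<f(u^N),v^N\>_{L^2}=\<P_Nf(u^N),v^N\>_{L^2}$. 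Hence only the noise survives, and $dV_1=\<v^N,P_N\,dW\>_{L^2}+\frac12\tr[P_N\Q P_N]\,dt$. Applying It\^o to $(V_1)^p$, the drift is dominated by $C(V_1)^{p-1}$ because $\tr[P_N\Q P_N]\le\tr(\Q)$ and the quadratic-variation density satisfies $\|\Q^{1/2}P_Nv^N\|_{L^2}^2\le\|\Q^{1/2}\|_{\LL(L^2)}^2\|v^N\|_{L^2}^2\le C\,V_1$, both bounds being uniform in $N$.

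To close the estimate I would take the supremum in $t$ and expectation. The martingale term is controlled by the Burkholder--Davis--Gundy inequality, producing a factor $C\,\E[(\sup_{t}(V_1)^p)^{1/2}(\int_0^T(V_1)^{p-1}\,ds)^{1/2}]$; a Young inequality absorbs half of $\E\sup_t(V_1)^p$ into the left-hand side, after which Gronwall's inequality yields $\E\sup_{t\le T}(V_1(t))^p\le C$ uniformly in $N$. Since \eqref{equ} and $C_1\ge b_1$ give $V_1\ge\frac12\|X^N\|_{\H^1}^2$, choosing exponent $p/2\ge1$ and using $\|X^N\|_{\H^1}^p\le(2V_1)^{p/2}$ delivers \eqref{spec;H1}. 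To make this rigorous I would run the argument up to the stopping time $\tau_R=\inf\{t:\|X^N(t)\|_{\H^1}\ge R\}$, obtain bounds independent of $R$, and let $R\to\infty$; finiteness of the energy bound rules out blow-up, so the local solution extends globally.

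I expect the main obstacle to be the a priori estimate rather than local existence: since $f$ is only locally Lipschitz no naive Gronwall argument applies, so one must exploit the cancellation of the nonlinear drift in $dV_1$ (which hinges on $\Lambda_N$ being self-adjoint on $U_N$ and on $\<f(u^N),v^N\>_{L^2}=\<P_Nf(u^N),v^N\>_{L^2}$) and control the $(V_1)^p$ martingale uniformly in $N$. The $N$-independence of the final constant is precisely what the uniform bounds $\tr[P_N\Q P_N]\le\tr(\Q)$ and $\|\Q^{1/2}P_N\|_{\LL(L^2)}\le\|\Q^{1/2}\|_{\LL(L^2)}$ provide.
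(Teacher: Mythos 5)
Your proposal is correct and follows essentially the same route as the paper: local existence via a fixed-point argument exploiting the local Lipschitz nonlinearity, the boundedness of $A_N$ and the unitarity of $E_N(t)$, then global extension through an a priori bound obtained by applying It\^o's formula to $(V_1(u^N,v^N))^p$, where the Hamiltonian cancellation of the drift (including the $P_N$-adjointness identity $\<f(u^N),v^N\>_{L^2}=\<P_Nf(u^N),v^N\>_{L^2}$) leaves only the noise contributions, controlled uniformly in $N$ by $\tr[P_N\Q P_N]\le\tr(\Q)$ together with Burkholder--Davis--Gundy, Young and Gronwall estimates under a stopping-time localization.
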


\begin{prop}
	Assume that $X_0\in \H^1$ and $\Q^{\frac 12}\in \mathcal L_2(L^2).$
	The mild solution $X^N(t)$ satisfies the energy evolution law
	\begin{align*}
	\E(V_1(u^N(t),v^N(t)))
	=&V_1(u^N_0, v^N_0)
	+\frac 12{\rm{Tr}}\left(( P_N\Q^\frac 12)(P_N\Q^\frac 12)^\ast\right)t, \quad t\in \mathbb R^+.
	\end{align*}
\end{prop}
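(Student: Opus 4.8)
The plan is to exploit the fact that, in contrast to the exact equation \eqref{mod;swe}, the Galerkin system \eqref{mod;sga} is a genuinely finite-dimensional stochastic differential equation on $U_N\times U_N$, so that It\^o's formula may be applied directly to $V_1(u^N,v^N)$ without the finite-dimensional approximation and limiting procedure invoked for Lemma \ref{lm;eel}. First I would write \eqref{mod;sga} componentwise as $du^N=v^N\,dt$ and $dv^N=\bigl(\Lambda_Nu^N-P_Nf(u^N)\bigr)\,dt+P_N\,dW$, and then compute the derivatives of the Lyapunov functional \eqref{Lya-V1} restricted to $U_N\times U_N$. The only delicate point is the derivative of the potential term: for $w\in U_N$ one has $\frac{d}{d\varepsilon}F(u^N+\varepsilon w)\big|_{\varepsilon=0}=\langle f(u^N),w\rangle=\langle P_Nf(u^N),w\rangle$, so the gradient of $F$ along $U_N$ is $P_Nf(u^N)$, whence $D_uV_1=-\Lambda_Nu^N+P_Nf(u^N)$ and $D_vV_1=v^N$.

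Next I would assemble the drift generated by It\^o's formula, namely $\langle D_uV_1,v^N\rangle+\langle D_vV_1,\Lambda_Nu^N-P_Nf(u^N)\rangle$. The key observation is a Hamiltonian-type cancellation: since $\Lambda_N$ is self-adjoint on $U_N$, the terms $\langle-\Lambda_Nu^N,v^N\rangle$ and $\langle v^N,\Lambda_Nu^N\rangle$ annihilate each other, and likewise $\langle P_Nf(u^N),v^N\rangle$ cancels $-\langle v^N,P_Nf(u^N)\rangle$, so the entire drift vanishes. The second-order It\^o term is contributed solely by $\tfrac12\|v^N\|_{L^2}^2$, whose Hessian in $v$ is the identity, and it equals $\tfrac12\,\tr\bigl[(P_N\Q^{\frac12})(P_N\Q^{\frac12})^\ast\bigr]\,dt$, coming from the diffusion coefficient $\G_N$ acting on the $\Q$-Wiener process. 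Collecting these pieces yields $dV_1(u^N,v^N)=\langle v^N,P_N\,dW\rangle+\tfrac12\,\tr\bigl[(P_N\Q^{\frac12})(P_N\Q^{\frac12})^\ast\bigr]\,dt$.

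Finally I would integrate in time and take expectations: the deterministic part gives $\tfrac12\,\tr[(P_N\Q^{\frac12})(P_N\Q^{\frac12})^\ast]\,t$, and it remains to argue that the stochastic integral $\int_0^t\langle v^N(s),P_N\,dW(s)\rangle$ is a genuine martingale with zero mean. This is where I would invoke the a priori bound \eqref{spec;H1} of Lemma \ref{h1}: because $\E\int_0^t\|\Q^{\frac12}v^N(s)\|_{L^2}^2\,ds\le\|\Q^{\frac12}\|_{\mathcal L_2(L^2)}^2\,\E\int_0^t\|v^N(s)\|_{L^2}^2\,ds<\infty$, the integrand lies in $L^2(\Omega\times[0,t];\mathcal L_2(L^2,\R))$ and the expectation of the stochastic integral vanishes, giving the claimed identity. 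I expect no serious obstacle here, the argument being essentially a finite-dimensional It\^o computation; the only points warranting care are the correct identification of the projected gradient $P_Nf(u^N)$ of the potential on $U_N$ and the integrability, supplied by Lemma \ref{h1}, needed to discard the martingale term.
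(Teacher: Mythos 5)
Your proposal is correct and follows essentially the same route as the paper: a direct finite-dimensional It\^o computation for $V_1(u^N,v^N)$ on $U_N\times U_N$, in which the Hamiltonian structure cancels the drift (exactly the identity $\left(\GG_{A_N+\F_N,\G_N}(V_{1})\right)(u^N,v^N)=\frac 12\tr\left(P_N\Q^{\frac 12}(P_N\Q^{\frac 12})^*\right)$ computed in the proof of Lemma \ref{explemma;solu}), the It\^o correction produces the trace term, and the martingale term is discarded using the a priori bound \eqref{spec;H1} of Lemma \ref{h1}. Your care with the projected gradient $P_Nf(u^N)$ and with the square-integrability needed for the zero-mean property of the stochastic integral is exactly what the argument requires.
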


\iffalse
\begin{proof}
	We introduce the Lyapunov functional
	\begin{align*}
	V_1(u^N,v^N)=\frac 12\|u^N\|^2_{\dot\H^1}+\frac 12\|v^N\|^2_{L^2}
	+F(u)+C,
	\end{align*}
	where $C\ge b_1$.
	Based on It\^o's formula, we have
	\begin{align*}
	dV_1(u^N,v^N)
	=&\<\nabla u^N,\nabla v^N\>_{L^2}dt
	+\<v^N,\Lambda_Nu^N\>_{L^2}dt+\<v^N,P_NdW(t)\>_{L^2}\\
	&-\<v^N,f(u^N)\>_{L^2}dt
	+\<f(u^N),v^N\>_{L^2}dt
	+\frac 12\sum\limits_{i=1}^N\|Q^{\frac 12}e_i\|^2dt.
	\end{align*}
	Taking expectation, we obtain
	\begin{align*}
	&\E\|u^N(t)\|^2_{\dot\H^1}+\E\|v^N(t)\|^2_{L^2}\\
	\leq &\E\|u^N(0)\|^2_{\dot\H^1}+\E\|v^N(0)\|^2_{L^2}+
	F(u^N(0))+C+t\sum\limits_{i=1}^N\|Q^{\frac 12}e_i\|^2.
	\end{align*}
	Using \eqref{equ}, we have \eqref{spec;H1}.
\end{proof}

\fi

\subsection{Exponential integrability and regularity estimates of the spatial discretization}

In this part, we show the exponential integrability property of $X^N$.
In \cite[Section 5.4]{CHJ13}, the authors first obtain the exponential integrability of spectral Galerkin method of $2$-dimensional stochastic wave equation driven by multiplicative noise on a non-empty compact domain. 
For the applications of exponential integrability, we refer to \cite{BG99,BCH18,CH17,CH18,CHS18b,HK98} and references therein.

\begin{lm}
	\label{explemma;solu}
	Assume that $X_0\in \H^1,$ $T>0$ and $\Q^{\frac 12}\in \mathcal L_2(L^2).$
	Then there exist a constant $\alpha\geq \tr(\Q)$ and a positive constant $C:=C(X_0,T,\mathbf Q, \alpha)$ such that
	\begin{align}\label{exp-mom}
	\sup_{s\in[0,T]}\E\left[\exp\left(\frac{V_1(u^N(s),v^N(s))}{\exp(\alpha s)}\right)\right]\leq C.
	\end{align}
	%Here and throughout this paper $\E[\cdot]$ denotes the expectation operator.
\end{lm}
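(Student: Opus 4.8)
The plan is to invoke the abstract exponential integrability criterion of \cite{CHJ13} with the Lyapunov functional $V_1$ from \eqref{Lya-V1} playing the role of the exponent. Since \eqref{mod;sga} is, for fixed $N$, a finite-dimensional It\^o equation on $U_N\times U_N$ with locally Lipschitz coefficients and $V_1$ is a smooth functional there, the a priori bound \eqref{spec;H1} legitimizes It\^o's formula. Denoting by $\mathscr L$ the Kolmogorov generator of \eqref{mod;sga}, the criterion asserts that if there is $\alpha\in\R$ with
\begin{align*}
(\mathscr L V_1)(X^N)+\frac12\sum_{k\in\N^+}\big\<v^N,P_N\Q^{\frac12}e_k\big\>_{L^2}^2\le \alpha\,V_1(u^N,v^N)
\end{align*}
holding pointwise, then $s\mapsto\exp\big(e^{-\alpha s}V_1(u^N(s),v^N(s))\big)$ is a supermartingale, whence $\E[\exp(e^{-\alpha s}V_1(u^N(s),v^N(s)))]\le\exp(V_1(u^N_0,v^N_0))$ for every $s\in[0,T]$. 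The whole proof thus reduces to verifying this pointwise inequality with some $\alpha\ge\tr(\Q)$.

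First I would compute $\mathscr L V_1$. With $D_uV_1=-\Lambda_N u^N+f(u^N)$, $D_vV_1=v^N$ and $D_{vv}V_1=I$ on $U_N$,
\begin{align*}
(\mathscr L V_1)(X^N)=\<v^N,-\Lambda_N u^N+f(u^N)\>_{L^2}+\<\Lambda_N u^N-P_N f(u^N),v^N\>_{L^2}+\frac12\tr\big(P_N\Q P_N\big).
\end{align*}
The conservative contributions cancel exactly: $\<v^N,-\Lambda_N u^N\>_{L^2}+\<\Lambda_N u^N,v^N\>_{L^2}=0$, and since $v^N\in U_N$ one has $\<P_N f(u^N),v^N\>_{L^2}=\<f(u^N),v^N\>_{L^2}$, so the two $f$-terms cancel as well. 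This is exactly the energy structure underlying the energy evolution law, and it leaves $(\mathscr L V_1)(X^N)=\frac12\tr(P_N\Q P_N)\le\frac12\tr(\Q)$.

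For the diffusion correction, only the $v$-component of $\nabla V_1$ is seen by the noise, and using $P_N v^N=v^N$ together with Parseval's identity,
\begin{align*}
\sum_{k\in\N^+}\big\<v^N,P_N\Q^{\frac12}e_k\big\>_{L^2}^2=\sum_{k\in\N^+}\big\<\Q^{\frac12}v^N,e_k\big\>_{L^2}^2=\|\Q^{\frac12}v^N\|_{L^2}^2\le\tr(\Q)\,\|v^N\|_{L^2}^2.
\end{align*}
Since $\frac12\|v^N\|_{L^2}^2\le V_1(u^N,v^N)$ by \eqref{Lya-V1}, this correction is bounded by $\tr(\Q)\,V_1$; moreover choosing the constant $C_1$ in \eqref{Lya-V1} so that $V_1\ge\frac12$ (recall $V_1\ge C_1-b_1$) lets me absorb the constant $\frac12\tr(\Q)$ into $\tr(\Q)\,V_1$. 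Collecting the two estimates yields the desired inequality with $\alpha=2\tr(\Q)\ge\tr(\Q)$. Finally, $V_1(u^N_0,v^N_0)=V_1(P_Nu_0,P_Nv_0)$ is bounded uniformly in $N$ by a constant depending only on $X_0$, using the $\dot\H^1$- and $L^2$-contractivity of $P_N$ and the Sobolev embedding $\dot\H^1\hookrightarrow L^4$ (valid for $d\le2$) to control $F(P_Nu_0)$; this gives \eqref{exp-mom} with $C=C(X_0,T,\Q,\alpha)$.

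I expect the diffusion-correction step, rather than the generator computation, to be the crux: one must recognize that $\frac12\sum_k\<v^N,P_N\Q^{\frac12}e_k\>_{L^2}^2$ is precisely $\frac12\|\Q^{\frac12}v^N\|_{L^2}^2$ and then dominate it by $V_1$ itself, since only a bound \emph{linear} in $V_1$ closes the self-improving differential inequality that the criterion requires; it is the cancellation of the conservative terms in $\mathscr L V_1$ that makes such a linear bound available. A secondary technical point is the localization turning the It\^o computation into the supermartingale inequality, handled in the standard way via stopping times $\tau_R=\inf\{s\in[0,T]:\|X^N(s)\|_{\H^1}\ge R\}$, the bound \eqref{spec;H1} ensuring $\tau_R\to\infty$, and Fatou's lemma.
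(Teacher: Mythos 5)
Your proposal is correct and follows essentially the same route as the paper: compute the generator of \eqref{mod;sga} applied to $V_1$, use the Hamiltonian cancellation $\<f(u^N)-\Lambda_N u^N,v^N\>_{L^2}+\<v^N,\Lambda_N u^N-P_Nf(u^N)\>_{L^2}=0$, bound the diffusion correction by $\tr(\Q)\,V_1$, and invoke the exponential integrability lemma of \cite{CHJ13}. The only (harmless) deviations are cosmetic: the paper keeps the constant $\tfrac12\tr(\Q)$ as the auxiliary term $\bar U=-\tfrac12\tr(\Q)$ in \cite[Corollary 2.4]{CHJ13} rather than absorbing it into $\alpha V_1$ via $V_1\ge\tfrac12$, and it cites that corollary as a black box where you rederive the supermartingale property (your undamped version of the criterion is stronger than the $e^{-\alpha s}$-weighted one, hence still sufficient).
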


\begin{proof}
	Denote 
	\begin{align*}
	(\GG_{A_N+\F_N, \mathbb G_{N}}\left(V_{1}\right))(u,v):=&
	\<D_uV_1(u,v),v\>_{L^2}+\<D_vV_1(u,v),\Lambda u-f(u)\>_{L^2}\\
	&+\frac 12\sum\limits_{i=1}^\infty \< D_{vv}V_1(u,v) P_N\Q^\frac 12e_i,P_N\Q^\frac 12 e_i\>_{L^2}.
	\end{align*}
	A direct calculation similar to $(5.43)$ in \cite[Section 5.4]{CHJ13} leads to 
	\begin{align*}
	&\left(\GG_{A_N+\F_N,\G_N}(V_{1})\right)\left(u^{N},v^{N}\right)\\
	=&\<f(u^N)-\Lambda u^N, v^N\>_{L^2}+\<v^N,P_N(\Lambda u^N-f(u^N))\>_{L^2}
	+\frac 12\tr(P_N\Q^\frac 12 (P_N\Q^\frac 12)^*)\\
	=&\frac 12\tr(P_N\Q^\frac 12 (P_N\Q^\frac 12)^*).
	\end{align*}	
	Then we get that for $\alpha>0$,
	\begin{align*}
	&\left(\GG_{A_N+\F_N,\G_N}(V_{1})\right)\left(u^{N},v^{N}\right)+\frac{1}{2\exp(\alpha t)} 
	\sum\limits_{i=1}^\infty\<(P_N\Q^\frac 12)^*v^N,e_i\>^2_{L^2}
	\\
	\leq&\frac 12\tr(\Q)+\frac{1}{2\exp(\alpha t)}\sum\limits_{i=1}^\infty\<v^N,\Q^\frac 12e_i\>^2_{L^2}
	%\leq \frac 12\tr(\Q)+\frac{1}{2\exp(\alpha t)}\|v^N\|_{L^2}^2\tr(\Q)\\
	\leq \frac 12\tr(\Q)+\frac{1}{\exp(\alpha t)}V_1(u^N,v^N)\tr(\Q).
	\end{align*}
	
	Let $\bar U=-\frac 12\tr(\Q),\alpha\geq \tr(\Q).$ According to the exponential integrability lemma in \cite[Corollary 2.4]{CHJ13}, we have
	\begin{align*}
	\E\left[\exp\left(\frac{V_1(u^N(t),v^N(t))}{\exp (\alpha t)}+\int_0^t\frac{\bar U(s)}{\exp (\alpha s)}ds \right) \right]\leq \exp (V_1(u_0^N,v_0^N)),
	\end{align*}
	which implies \eqref{exp-mom}.
\end{proof}

%In view of the above lemma, we have the following corollaries using the  Gagliardo--Nirenberg inequality and Sobolev embedding theorem.

\begin{cor}
	\label{cor;EIsolu}
	Let  $d=1,2$. Assume that $X_0\in \H^1,$ $T>0$ and $\Q^{\frac 12}\in \mathcal L_2(L^2).$ For any $c>0$, it holds that
	\begin{align*}
	\sup_{N\in \mathbb N^+}\E\Big[\exp\left(\int_0^T c\|u^N(s)\|_{L^6}^2ds\right)\Big]<\infty.
	\end{align*}
\end{cor}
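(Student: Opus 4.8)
The plan is to derive the corollary as a consequence of the uniform exponential integrability bound \eqref{exp-mom} in Lemma \ref{explemma;solu}, which controls the exponential moment of the Lyapunov functional $V_1(u^N(s),v^N(s))$. The essential observation is that $V_1$ dominates $F(u^N)$, and by the lower bound in \eqref{equ} the potential functional controls $\|u^N\|_{L^4}^4$; thus $V_1(u^N(s),v^N(s)) \ge a_1\|u^N(s)\|_{L^4}^4 - b_1 + C_1$. In low dimension $d\le 2$, Sobolev embedding and interpolation allow us to bound $\|u^N\|_{L^6}^2$ in terms of the quantities controlled by $V_1$. The main point is therefore to convert a bound on the exponential moment of an $L^4$-type (or $\dot\H^1$) quantity into a bound on the exponential moment of the time integral of $\|u^N(s)\|_{L^6}^2$.

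The key steps, in order, are as follows. First I would establish a pointwise (in $s$) deterministic inequality of the form $\|u^N(s)\|_{L^6}^2 \le C\,V_1(u^N(s),v^N(s)) + C$ valid for $d\le 2$; this is the analytic heart of the argument. For $d=1$ one has the embedding $\dot\H^1 \hookrightarrow L^\infty$, so $\|u^N\|_{L^6}^2 \le C\|u^N\|_{\dot\H^1}^2 \le C V_1 + C$. For $d=2$ one uses the Gagliardo--Nirenberg/Sobolev inequality $\|u^N\|_{L^6} \le C\|u^N\|_{\dot\H^1}^{\theta}\|u^N\|_{L^2}^{1-\theta}$ (or directly $\dot\H^1\hookrightarrow L^p$ for all finite $p$ in two dimensions) together with the $\dot\H^1$-control coming from the $\frac12\|u^N\|_{\dot\H^1}^2$ term in $V_1$, again yielding $\|u^N\|_{L^6}^2 \le C V_1 + C$. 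Second, using this bound and that $V_1(u^N(s),v^N(s)) \le \exp(\alpha T)\cdot \frac{V_1(u^N(s),v^N(s))}{\exp(\alpha s)}$ for $s\in[0,T]$, I would write
\begin{align*}
\int_0^T c\|u^N(s)\|_{L^6}^2\,ds \le cC\int_0^T V_1(u^N(s),v^N(s))\,ds + cCT.
\end{align*}
Third, by Jensen's inequality applied to the (normalized) time integral, $\exp\big(\frac{1}{T}\int_0^T cCT\,V_1\,ds\big) \le \frac{1}{T}\int_0^T \exp(cCT\,V_1(u^N(s),v^N(s)))\,ds$, so taking expectations and applying Fubini reduces everything to $\sup_{s\in[0,T]}\E[\exp(cCT\,V_1(u^N(s),v^N(s)))]$.

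The final step is to match the constant $cCT$ against the admissible exponent $1/\exp(\alpha s)$ furnished by Lemma \ref{explemma;solu}. Since Lemma \ref{explemma;solu} gives a uniform (in $N$) bound only for the specific exponent $\exp(-\alpha s)$, I would invoke the freedom in that lemma: the exponential integrability estimate holds with the Lyapunov functional scaled by any sufficiently large prefactor, because replacing $V_1$ by $\kappa V_1$ and rerunning the infinitesimal-generator computation multiplies the drift and diffusion contributions by controllable factors, so one obtains $\sup_{s}\E[\exp(\kappa\,e^{-\alpha_\kappa s}V_1)]\le C$ for every $\kappa>0$ with a possibly larger $\alpha_\kappa$. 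Choosing $\kappa$ so that $\kappa e^{-\alpha_\kappa T} \ge cCT$ then gives the desired uniform bound. I expect the main obstacle to be precisely this last rescaling: verifying that the exponential integrability lemma \cite[Corollary 2.4]{CHJ13} applies with an arbitrarily large coefficient in front of $V_1$ (equivalently, that the generator inequality in the proof of Lemma \ref{explemma;solu} survives multiplication of $V_1$ by a large constant $\kappa$ with only the rate $\alpha$ deteriorating), since the cubic nonlinearity must continue to cancel and the noise term must remain dominated by $V_1$ after rescaling.
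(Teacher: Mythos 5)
There is a genuine gap, and it sits exactly where you predicted: the final rescaling step. Your Steps 1--3 are valid inequalities, but they reduce the problem to bounding $\sup_{s\in[0,T]}\E\big[\exp\big(cCT\,V_1(u^N(s),v^N(s))\big)\big]$ with an \emph{arbitrarily large} coefficient $cCT$ in front of $V_1$, and no version of Lemma \ref{explemma;solu} can deliver this. If you rerun the generator computation with $\kappa V_1$ in place of $V_1$, the drift contribution scales like $\kappa$, but the quadratic (It\^o-correction) term scales like $\kappa^2$: one gets
\begin{align*}
\frac{\kappa^2}{2e^{\alpha t}}\sum_{i}\<(P_N\Q^{\frac12})^*v^N,e_i\>_{L^2}^2
\;\le\; \frac{\kappa\,\tr(\Q)}{e^{\alpha t}}\,\big(\kappa V_1\big),
\end{align*}
so the hypothesis of \cite[Corollary 2.4]{CHJ13} forces $\alpha_\kappa\ge\kappa\,\tr(\Q)$. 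Consequently the effective coefficient you obtain at the terminal time is $\kappa e^{-\alpha_\kappa T}\le \kappa e^{-\kappa\tr(\Q)T}\le (e\,\tr(\Q)\,T)^{-1}$, which is bounded \emph{uniformly in $\kappa$}; it can never be made $\ge cCT$ for large $c$. This is not a technical artifact of the lemma: since $V_1\ge\frac12\|v^N\|_{L^2}^2$ and $v^N$ contains a Gaussian stochastic convolution, Fernique-type considerations show $\E[\exp(\lambda V_1)]$ is genuinely infinite once $\lambda$ exceeds a finite threshold. So your Step 1, the pointwise bound $\|u^N\|_{L^6}^2\le C V_1+C$, is correct but too lossy --- it funnels the large constant onto quantities with Gaussian tails, where exponential integrability with large parameters simply does not hold.

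The paper's proof avoids this by never collapsing the $L^6$ norm onto $V_1$ as a whole. It keeps the interpolation structure $\|u^N\|_{L^6}^2\le C\|\nabla u^N\|_{L^2}^{2a}\|u^N\|_{L^2}^{2(1-a)}$ with $a=\frac d3<1$ (this is where $d\le2$ enters) and applies Young's inequality \emph{asymmetrically}: the gradient factor is given exactly the small weight $\frac{1}{2e^{\alpha t}}$ that Lemma \ref{explemma;solu} tolerates, while the arbitrarily large constant $(cCT)^{\frac1{1-a}}$ is pushed entirely onto $\|u^N\|_{L^2}^2$. That term, in turn, is harmless: by H\"older and Young, any fixed multiple of $\|u^N\|_{L^2}^2$ is dominated by $\epsilon\|u^N\|_{L^4}^4+C(\epsilon)$ with $\epsilon$ as small as one likes, and for $\epsilon\le a_1e^{-\alpha T}$ the quartic part $F(u^N)$ inside $V_1$ absorbs it, so one lands on $\exp\big(V_1/e^{\alpha t}\big)$ with precisely the admissible weight. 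In short: the large constant $c$ must only ever multiply $\|u^N\|_{L^2}^2$, never $\|\nabla u^N\|_{L^2}^2$, $\|v^N\|_{L^2}^2$, or $\|u^N\|_{L^4}^4$. Your argument as written cannot be repaired by tuning $\kappa$ and $\alpha_\kappa$; it needs to be restructured along these lines.
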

\begin{proof}
	By using Jensen's inequality, the Gagliardo--Nirenberg inequality $\|u\|_{L^6}\le C\|\nabla u\|_{L^2}^a\|u\|^{1-a}_{L^2}$ with $a=\frac d3$, and the Young inequality,
	we have that
	\begin{align*}
	&\E\left[\exp\left(\int_0^T c\|u^N(s)\|_{L^6}^2ds\right)\right)\leq \sup_{t\in[0,T]}\E\left[\exp(cT\|u^N(t)\|_{L^6}^2)\right]\\
	\leq&  \sup_{t\in [0,T]}\E\left[\exp\left(\frac{\|\nabla u^N(t)\|_{L^2}^2}{2\exp(\alpha t)}\right)\exp\Big(\exp(\frac a{1-a}\alpha T)\|u^N(t)\|_{L^2}^2 (cCT)^{\frac 1{1-a}}2^{\frac a{1-a}}\Big)\right].
	\end{align*}
	Then the H\"older and the Young inequalities imply that for sufficient small $\epsilon>0,$
	\begin{align*}
	&\E\left[\exp\left(\int_0^T c\|u^N(s)\|_{L^6}^2ds\right)\right]\\\nonumber 
	\leq&  C(\epsilon,d)\sup_{t\in [0,T]}\E\left[\exp\left(\frac{\|\nabla u^N(t)\|_{L^2}^2}{2\exp(\alpha t)}\right)\exp(\epsilon \|u^N(t)\|_{L^4}^4)\right]\\\nonumber 
	\le& C(\epsilon,d)\sup_{t\in [0,T]}\E\left[\exp\left(\frac{V_1(u^N(t),v^N(t))}{\exp(\alpha t)}\right)\right].
	\end{align*}
	Applying Lemma \ref{explemma;solu}, 
	we complete the proof.
\end{proof}
\begin{rk}
	When $d=1,$ using the Gagliardo--Nirenberg inequality  $\|u\|_{L^{\infty}}\le \|\nabla u\|_{L^2}^{\frac 12}\|u\|_{L^2}^{\frac 12}$, one can obtain that for any $c>0$,
	$
	\sup\limits_{N\in\mathbb N^+}\E\left[\exp\left(\int_0^T c\|u^N(s)\|_{L^\infty}^2ds\right)\right]<\infty.
	$
\end{rk}

Now we show the higher regularity estimate of the solution of \eqref{mod;sga}.

\begin{prop}\label{d1-reg}
	Let $p\ge1$, $d=1$, $\beta\ge 1$, $\|(-\Lambda)^{\frac {\beta-1}2}\Q^\frac 12\|_{\mathcal L_2(L^2)}<\infty$, $T>0$ and $X_0\in \H^\beta$.
	Then the mild solution of \eqref{mod;sga} satisfies
	\begin{align*}
	\|X^N\|_{L^{p}(\Omega;\mathcal C([0,T];\mathbb H^{\beta}))}\le C(X_0,T,\mathbf Q,p).
	\end{align*}
\end{prop}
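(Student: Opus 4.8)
The plan is to estimate $\|X^N(t)\|_{\H^\beta}$ pathwise from the mild formula \eqref{sol;sga} and then take $L^p(\Omega)$-moments. Applying the $\H^\beta$-norm to \eqref{sol;sga}, I would first record that $E_N(t)$ is a unitary group not only on $\H$ (Lemma \ref{lm;2}) but on every $\H^r$: since $C_N(t),S_N(t)$ are functions of $(-\Lambda_N)^{1/2}$ they commute with $(-\Lambda_N)^{r/2}$, so the trigonometric identity upgrades to $\|E_N(t)\|_{\LL(\H^r)}=1$. This gives
\[
\|X^N(t)\|_{\H^\beta}\le \|X_0^N\|_{\H^\beta}+\int_0^t\|\F_N(X^N(s))\|_{\H^\beta}\,ds+\Big\|\int_0^tE_N(t-s)\G_N\,dW(s)\Big\|_{\H^\beta}.
\]
The first term is $\le\|X_0\|_{\H^\beta}$ because $P_N$ commutes with $(-\Lambda)^{\beta/2}$ and is a contraction. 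For the stochastic convolution I would use the factorization method with kernel $(t-s)^{\alpha-1}$, $\alpha\in(0,\tfrac12)$, together with the Burkholder--Davis--Gundy inequality, reducing its $L^p(\Omega;\mathcal C([0,T];\H^\beta))$-norm to $\|\G_N\Q^{1/2}\|_{\LL_2(L^2,\H^\beta)}$. Since the $\H^\beta$-norm of $(0,w)^\top$ equals $\|w\|_{\dot\H^{\beta-1}}$, this Hilbert--Schmidt norm equals $\|(-\Lambda)^{(\beta-1)/2}P_N\Q^{1/2}\|_{\LL_2(L^2)}\le\|(-\Lambda)^{(\beta-1)/2}\Q^{1/2}\|_{\LL_2(L^2)}<\infty$ uniformly in $N$, by hypothesis.

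The remaining deterministic term is the core of the argument. Since $\F_N(X^N)=(0,-P_Nf(u^N))^\top$, we have $\|\F_N(X^N(s))\|_{\H^\beta}=\|P_Nf(u^N(s))\|_{\dot\H^{\beta-1}}\le\|f(u^N(s))\|_{\dot\H^{\beta-1}}$. With $d=1$ I would estimate the polynomial $f(u)=c_3u^3+\cdots+c_0$ by a tame (Moser-type) product inequality in the scale $\dot\H^{\beta-1}=\DD((-\Lambda)^{(\beta-1)/2})$, exploiting that powers of a function vanishing on $\partial\OO$ together with the relevant even derivatives remain in that domain, so as to obtain
\[
\|f(u^N(s))\|_{\dot\H^{\beta-1}}\le C\big(1+\|u^N(s)\|_{L^\infty}^2\big)\|u^N(s)\|_{\dot\H^{\beta-1}}+C\le C\big(1+\|u^N(s)\|_{\dot\H^1}^2\big)\|X^N(s)\|_{\H^\beta}+C,
\]
where I used $\|u^N\|_{\dot\H^{\beta-1}}\le\lambda_1^{-1/2}\|u^N\|_{\dot\H^\beta}\le\lambda_1^{-1/2}\|X^N\|_{\H^\beta}$ and the one-dimensional embedding $\dot\H^1\hookrightarrow L^\infty$ to replace $\|u^N\|_{L^\infty}$ by $\|u^N\|_{\dot\H^1}$.

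Inserting these bounds and applying Gronwall's inequality pathwise yields
\[
\sup_{t\in[0,T]}\|X^N(t)\|_{\H^\beta}\le C\Big(\|X_0\|_{\H^\beta}+\sup_{t\in[0,T]}\Big\|\int_0^tE_N(t-s)\G_N\,dW(s)\Big\|_{\H^\beta}+1\Big)\exp\Big(C\int_0^T\big(1+\|u^N(s)\|_{\dot\H^1}^2\big)\,ds\Big).
\]
To conclude in $L^p(\Omega)$ I would take the $L^p$-norm and split by Hölder's inequality: the prefactor has finite moments of every order (its stochastic part by the factorization estimate above), while the exponential factor has finite moments of every order thanks to the exponential integrability of $\int_0^T\|u^N(s)\|_{\dot\H^1}^2\,ds$, which in $d=1$ follows from the $L^\infty$-variant noted in the Remark after Corollary \ref{cor;EIsolu} (equivalently from Lemma \ref{explemma;solu} via $\|u^N\|_{\dot\H^1}^2\le 2V_1(u^N,v^N)$). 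All constants produced are independent of $N$, giving the claimed uniform bound.

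I expect the main obstacle to be twofold. First, establishing the tame nonlinear estimate in the fractional domain $\dot\H^{\beta-1}$ for arbitrary $\beta\ge1$, rather than in ordinary Sobolev spaces, requires checking that $f(u^N)$ satisfies the boundary-compatibility conditions defining $\DD((-\Lambda)^{(\beta-1)/2})$ and an interpolation argument for non-integer $\beta$. Second, closing the Gronwall estimate in $L^p(\Omega)$ is delicate because the exponential-of-an-integral factor is integrable only by virtue of the exponential integrability property; thus this regularity estimate genuinely relies on the structure-preserving analysis of the previous subsection rather than on a naive moment bound.
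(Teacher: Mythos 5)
Your overall strategy (pathwise Gronwall from the mild formulation, then closing the moments via exponential integrability) is workable in $d=1$, but the moment-closing step as you wrote it has a genuine gap. After Gronwall and H\"older you need, for arbitrary $p\ge 1$ and for the (uncontrolled) constant $C$ produced by the tame estimate, $\E\bigl[\exp\bigl(2pC\int_0^T\|u^N(s)\|_{\dot\H^1}^2\,ds\bigr)\bigr]<\infty$, i.e.\ exponential integrability of the full gradient energy at an \emph{arbitrarily large} scale. Neither source you cite provides this. Lemma \ref{explemma;solu} together with $\|u^N\|_{\dot\H^1}^2\le 2V_1(u^N,v^N)$ gives exponential moments of $\|u^N\|_{\dot\H^1}^2$ only at the one fixed scale $\tfrac 12 e^{-\alpha T}$, with $\alpha\ge \tr(\Q)$ not tunable; it does not scale up to $2pC$. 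The remark after Corollary \ref{cor;EIsolu} concerns $\int_0^T\|u^N\|_{L^\infty}^2\,ds$, and since $\|u\|_{L^\infty}\lesssim \|u\|_{\dot\H^1}$ (not conversely), it says nothing about the $\dot\H^1$-norm. Indeed the arbitrary-scale results in the paper (Corollary \ref{cor;EIsolu} and its remark) exist precisely because the Gagliardo--Nirenberg exponent on the gradient factor is strictly less than one, so the gradient contribution can be Young-ed down to the fixed scale of Lemma \ref{explemma;solu}; no such room exists for $\|u^N\|_{\dot\H^1}^2$ itself. Worse, the claim is generically false: the stochastic part of $u^N$ is Gaussian in $\dot\H^1$, and squared norms of nondegenerate Gaussians have finite exponential moments only below a critical scale.

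The repair is local: do not upgrade $\|u^N\|_{L^\infty}$ to $\|u^N\|_{\dot\H^1}$. Keeping the tame estimate in the form $\|f(u^N)\|_{\dot\H^{\beta-1}}\le C(1+\|u^N\|_{L^\infty}^2)\|X^N\|_{\H^\beta}+C$, Gronwall produces the factor $\exp\bigl(C\int_0^T(1+\|u^N\|_{L^\infty}^2)\,ds\bigr)$, whose moments of every order are finite by the remark after Corollary \ref{cor;EIsolu} (this is exactly an arbitrary-scale statement, valid for $d=1$); the rest of your argument then goes through. It is worth contrasting this with the paper's own proof, which avoids Gronwall and exponential integrability entirely: after the same tame estimate it applies H\"older and then Young's inequality to decouple the product, bounding
\begin{align*}
\E\sup_{t\in[0,T]}\left(\int_0^t\|(-\Lambda)^{\frac{\beta-1}2}f(u^N(s))\|_{L^2}\,ds\right)^p
\le C\int_0^T\E\bigl(1+\|u^N(s)\|_{\dot\H^1}^{4p}\bigr)ds
+C\int_0^T\E\|u^N(s)\|_{\dot\H^{\beta-1}}^{2p}\,ds,
\end{align*}
where the first term is controlled by Lemma \ref{h1} and the second by induction on $\beta$ (for $\beta\in[1,2)$ it also reduces to Lemma \ref{h1}, since $\|u^N\|_{\dot\H^{\beta-1}}\le C\|u^N\|_{\dot\H^1}$). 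That bootstrap needs only polynomial moments, so it is both simpler and independent of the exponential-integrability machinery, whereas your route, even once repaired, genuinely consumes the $d=1$ result on $L^\infty$.
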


\begin{proof}
	For the stochastic convolution, using the Burkholder--Davis--Gundy inequality and the unitary property of $E_N(\cdot)$, we have
	\begin{align*}
	&\E\sup\limits_{t\in[0,T]}\left\|\int_0^tE_N(t-s)\G_NdW(s)\right\|_{\mathbb H^{\beta}}^p
	\leq  C\left(\int_0^T\|(-\Lambda)^{\frac{\beta-1}{2}}\Q^{\frac 12}\|_{\LL_2(L^2)}^2ds\right)^\frac p2\leq C.
	\end{align*}
	Now it suffices to estimate $\|\int_0^tE_N(t-s)\F_N(X^N(s))ds\|_{L^{p}(\Omega;\mathcal C([0,T];\mathbb H^{\beta}))}$.
	Since
	\begin{align*}
	E_N(t-s)\F_N(X^N(s))=\begin{bmatrix}
	-(-\Lambda)^{-\frac 12}S(t-s)f(u^N(s))\\
	-C(t-s)f(u^N(s))
	\end{bmatrix},
	\end{align*}
	it suffices to estimate
	$\E\Big[\sup\limits_{t\in[0,T]}(\int_0^t\|(-\Lambda)^{\frac {\beta-1} 2}f(u^N(s))\|_{L^2}ds)^p\Big]$.
	The Sobolev embedding $\dot {\mathbb H}^1 \hookrightarrow L^\infty$ leads to
	\begin{align*}
	\int_0^t\left\|(-\Lambda)^{\frac {\beta-1} 2}f(u^N(s))\right\|_{L^2}ds
	%&\le C\int_0^t(1+\|u^N(s)\|_{L^\infty}^2)\|u^N(s)\|_{\dot \H^{\beta -1}}ds\\
	&\le C\int_0^t(1+\|u^N(s)\|_{\dot \H^1}^2)\|u^N(s)\|_{\dot \H^{\beta -1}}ds.
	\end{align*}
	Based on the H\"older inequality and the Young inequality, we obtain
	\begin{align*}
	&\E\sup\limits_{t\in[0,T]}\left(\int_0^t\|(-\Lambda)^{\frac {\beta-1} 2}f(u^N(s))\|_{L^2}ds\right)^p\\
	\leq& C\E
	\int_0^T(1+\|u^N(s)\|_{\dot \H^1}^2)^p\|u^N(s)\|_{\dot \H^{\beta -1}}^pds\\
	\leq & C\int_0^T \E(1+\|u^N(s)\|_{\dot \H^1}^{4p})ds
	+C\int_0^T \E\|u^N(s)\|_{\dot \H^{\beta -1}}^{2p}ds,
	\end{align*}
	which, together with Lemma \ref{h1}  shows the desired result for the case that $\beta \in [1,2)$. For $\beta\in [n,n+1)$, $n\in {\{2,3,\cdots\}}$, we complete the proof via induction arguments.
\end{proof}

The following regularity estimate of $X^N$ is for the case $d=2$. 
Compared to Proposition  \ref{d1-reg},  one may  use different skill to deal with the cubic nonlinearity due to the Sobolev embedding $\dot {\mathbb H}^1 \hookrightarrow L^\infty$  fails.

\begin{prop}
	Let $d=2$, $T>0$, $X_0\in \H^2$ and $\|(-\Lambda)^{\frac {1}2}\Q^\frac 12\|_{\mathcal L_2(L^2)}<\infty.$ 
	Then for any $p\geq2$, there exists a positive constant $C:=C(X_0,\Q,T,p)$ such that
	\begin{align}
	\label{spec;H2}
	\|X^N\|_{L^p(\Omega; \mathcal C([0,T];\H^2))}\leq
	C(X_0,\Q,T,p).
	\end{align}
\end{prop}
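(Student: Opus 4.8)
The plan is to argue directly from the mild formulation \eqref{sol;sga} and to exploit that, by the trigonometric identity of Lemma \ref{lm;2}, $E_N(t)$ is unitary on every space $\H^r$. Taking the $\H^2$-norm of \eqref{sol;sga} and the supremum over $t\in[0,T]$ decomposes the problem into three contributions. The initial datum $E_N(t)X^N_0$ is controlled by $\|X_0\|_{\H^2}$, which is finite since $X_0\in\H^2$. The stochastic convolution $\int_0^tE_N(t-s)\G_NdW(s)$ is bounded in $L^p(\Omega)$ exactly as in Proposition \ref{d1-reg} with $\beta=2$, namely by the Burkholder--Davis--Gundy inequality, the unitarity of $E_N$, and the hypothesis $\|(-\Lambda)^{\frac12}\Q^{\frac12}\|_{\LL_2(L^2)}<\infty$. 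The only genuinely new term is the deterministic convolution $\int_0^tE_N(t-s)\F_N(X^N(s))\,ds$. Writing $f=\tilde f+c_0$ with $\tilde f(u)=c_3u^3+\cdots+c_1u$, the part $\tilde f(u^N)$ vanishes on $\partial\OO$ (because $u^N$ does), hence lies in $\dot\H^1$, and its convolution is estimated through $\|\F_N\|_{\H^2}=\|P_N\tilde f(u^N)\|_{\dot\H^1}\le\|\tilde f(u^N)\|_{\dot\H^1}=\|f'(u^N)\nabla u^N\|_{L^2}$; the constant $c_0$ contributes a deterministic term whose $\H^2$-norm is bounded by $C\|c_0\|_{L^2}$ using the explicit integrals $\int_0^tS(t-s)\,ds=(-\Lambda)^{-\frac12}(I-C(t))$ and $\int_0^tC(t-s)\,ds=(-\Lambda)^{-\frac12}S(t)$, i.e.\ the oscillatory time-integration against the wave group regains the missing derivatives.

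The heart of the matter is that, since $\dot\H^1\hookrightarrow L^\infty$ fails in $d=2$, the cubic contribution $\|(u^N)^2\nabla u^N\|_{L^2}$ to $\|f'(u^N)\nabla u^N\|_{L^2}$ cannot be bounded by $\|u^N\|_{\dot\H^1}^3$ as in the one-dimensional Proposition \ref{d1-reg}; this is the "different skill" anticipated before the statement. Instead I would interpolate: by H\"older's inequality $\|(u^N)^2\nabla u^N\|_{L^2}\le\|u^N\|_{L^8}^2\|\nabla u^N\|_{L^4}$, and the two-dimensional Gagliardo--Nirenberg inequalities $\|w\|_{L^8}\le C\|w\|_{L^2}^{1/4}\|w\|_{\dot\H^1}^{3/4}$ and $\|\nabla w\|_{L^4}\le C\|w\|_{\dot\H^1}^{1/2}\|w\|_{\dot\H^2}^{1/2}$ give $\|(u^N)^2\nabla u^N\|_{L^2}\le C\|u^N\|_{L^2}^{1/2}\|u^N\|_{\dot\H^1}^{2}\|u^N\|_{\dot\H^2}^{1/2}$. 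The decisive feature is that $\|u^N\|_{\dot\H^2}$ enters only with exponent $1/2<1$, so a Young inequality produces $\|f'(u^N)\nabla u^N\|_{L^2}\le\tfrac12\|u^N\|_{\dot\H^2}+C\bigl(1+\|u^N\|_{L^2}\|u^N\|_{\dot\H^1}^4\bigr)$, the lower-order monomials of $f'$ (such as $u^N\nabla u^N$ and $\nabla u^N$) being handled analogously and yielding $\dot\H^2$ to a power at most $1/2$.

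Collecting the three contributions and writing $Y^N(\tau):=\sup_{t\le\tau}\|X^N(t)\|_{\H^2}$, together with $\|u^N(s)\|_{\dot\H^2}\le Y^N(s)$, produces pathwise the integral inequality $Y^N(\tau)\le C_0+Z^N+\tfrac12\int_0^\tau Y^N(s)\,ds+C\int_0^\tau\|u^N(s)\|_{L^2}\|u^N(s)\|_{\dot\H^1}^4\,ds$, where $C_0$ absorbs $\|X_0\|_{\H^2}$ and the bounded constant-term contribution and $Z^N$ is the supremum in time of the stochastic convolution. Gronwall's lemma then bounds $Y^N(T)$ by $e^{T/2}$ times the bracketed data; taking the $L^p(\Omega)$-norm finishes the proof, since $\|Z^N\|_{L^p(\Omega)}<\infty$ by Burkholder--Davis--Gundy, while, using the Poincar\'e inequality $\|u^N\|_{L^2}\le C\|u^N\|_{\dot\H^1}$ and H\"older's inequality in $\Omega$, the remaining term is dominated by $CT\,\|u^N\|_{L^{5p}(\Omega;\mathcal C([0,T];\dot\H^1))}^5$, which is finite and \emph{uniform in $N$} by Lemma \ref{h1}. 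The main obstacle is exactly the loss of $L^\infty$-control in two dimensions: everything hinges on arranging, via the Gagliardo--Nirenberg interpolation above, that the top-order norm $\|u^N\|_{\dot\H^2}$ appears sublinearly so that the estimate closes through Gronwall rather than diverging; a secondary point is keeping all constants independent of $N$, which is ensured by the contraction property of $P_N$ and the unitarity of $E_N$.
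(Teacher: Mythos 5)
Your proof is correct, and it takes a genuinely different route from the paper's. The paper proves \eqref{spec;H2} by a stochastic energy argument: it introduces the second Lyapunov functional $V_2(u^N,v^N)=\frac 12\|\Lambda u^N\|_{L^2}^2+\frac 12\|\nabla v^N\|_{L^2}^2+\frac 12\<(-\Lambda)u^N,f(u^N)\>_{L^2}$, applies It\^o's formula, and lets the Hamiltonian structure cancel all top-order terms, so that only the remainder $\frac 12\<\nabla u^N,f''(u^N)\nabla u^N\,v^N\>_{L^2}$ survives; this remainder is estimated with the same two-dimensional Gagliardo--Nirenberg inequalities you invoke, the cross term in $V_2$ is absorbed so that $V_2$ dominates $\frac 14\|\Lambda u^N\|_{L^2}^2$ up to lower-order quantities, and Gronwall applied to $\E V_2$ yields a bound in $\mathcal C([0,T];L^2(\Omega;\H^2))$, after which the $L^p(\Omega;\mathcal C([0,T];\H^2))$ statement is obtained by returning to the mild form as in Proposition \ref{d1-reg}. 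You instead stay entirely in the mild formulation \eqref{sol;sga}: unitarity of $E_N$ on $\H^2$, Burkholder--Davis--Gundy for the stochastic convolution, and the interpolation $\|(u^N)^2\nabla u^N\|_{L^2}\le C\|u^N\|_{L^2}^{1/2}\|u^N\|_{\dot\H^1}^{2}\|u^N\|_{\dot\H^2}^{1/2}$, whose decisive feature (sublinear appearance of the top norm, hence linear after Young) lets a pathwise Gronwall close and delivers the $\E\sup_{t\le T}$ bound in one pass, without It\^o calculus and without the two-step upgrade. Your treatment of the constant term is a genuine improvement in rigor: a nonzero constant does not lie in $\dot\H^1=\DD((-\Lambda)^{\frac 12})$ (its Dirichlet eigenexpansion $\sum_i\lambda_i\<c_0,e_i\>_{L^2}^2$ diverges), so the bound $\|P_Nf(u^N)\|_{\dot\H^1}\le\|f(u^N)\|_{\dot\H^1}$ is not available for $f$ itself, and your splitting $f=\tilde f+c_0$ together with the explicit time-integrals of the sine and cosine operators handles exactly the point that the paper's mild-form estimates pass over silently. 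One small addition you should make explicit: the pathwise Gronwall requires knowing $\sup_{t\le T}\|X^N(t)\|_{\H^2}<\infty$ almost surely beforehand; for fixed $N$ this follows from the inverse inequality $\|u^N\|_{\dot\H^2}\le\lambda_N\|u^N\|_{L^2}$ together with \eqref{spec;H1}, which is the same justification the paper gives for the integrability of $V_2$. With that sentence added your argument is complete, and all constants are uniform in $N$, since they come only from $\|X_0\|_{\H^2}$, $\|(-\Lambda)^{\frac 12}\Q^{\frac 12}\|_{\LL_2(L^2)}$, the Gagliardo--Nirenberg constants, and the moments supplied by Lemma \ref{h1}.
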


\begin{proof}
	We only present the proof for $p=2$ here, since the proof for general $p>2$ is similar.
	Similar to the proof of Proposition \ref{d1-reg}, it only suffices to get a uniform bound of $X^N$ under the $\mathcal C([0,T];L^2(\Omega;\mathbb H^2))$-norm. 
	We introduce another Lyapunov functional 
	$
	V_2(u^N,v^N)=\frac 12\left\|\Lambda u^N\right\|^2_{L^2}
	+\frac 12\left\|\nabla v^N\right\|^2_{L^2}
	+\frac 12\<(-\Lambda)u^N,f(u^N)\>_{L^2}.
	$
	By applying It\^o's formula to $V_2$ and the commutativity between $\Lambda$ and $P_N$, we get 
	\begin{align*}
	dV_2(t)%=%&\<\Lambda u^N,\Lambda v^N\>_{L^2}dt+\<\nabla v^N,\nabla(\Lambda_Nu^N)\>_{L^2}dt
	%+\frac 12{\rm{Tr}}\left((\nabla P_N\Q^\frac 12)(\nabla P_N\Q^\frac 12)^\ast\right)dt\\
	%&+\<\nabla v^N,-\nabla\left(P_Nf(u^N)\right)dt+\nabla P_NdW(t)\>_{L^2}\\
	%&+\frac 12\<(-\Lambda)v^N,f(u^N)\>_{L^2}dt+\frac 12\<(-\Lambda)u^N,f'(u^N)v^N\>_{L^2}dt\\
	%=&\frac 12\<\Lambda v^N,f(u^N)\>_{L^2}dt
	%+\<\nabla v^N,\nabla P_NdW(t)\>_{L^2}+\frac 12{\rm{Tr}}\left((\nabla P_N\Q^\frac 12)(\nabla P_N\Q^\frac 12)^\ast\right)dt\\
	%&+\frac 12\<\nabla u^N,f'(u^N)\nabla v^N\>_{L^2}dt+\frac 12\<\nabla u^N,f''(u^N)\nabla u^Nv^N\>_{L^2}dt\\
	=&{\MyRoman{1}_1}(t)dt
	+\<\nabla v^N(t),\nabla P_NdW(t)\>_{L^2}
	+\frac 12{\rm{Tr}}\left((\nabla P_N\Q^\frac 12)(\nabla P_N\Q^\frac 12)^\ast\right)dt,
	\end{align*}
	where $${\MyRoman{1}_1}(t)=\frac 12\<\nabla u^N(t),f''(u^N(t))\nabla u^N(t)v^N(t)\>_{L^2}.$$
	Making use of the H\"older inequality and the Gagliardo--Nirenberg inequality 
	$\|\nabla u\|_{L^4}\le C\|\Lambda u\|_{L^2}^{\frac 12}\|\nabla u\|_{L^2}^{\frac 12},$ we have
	\begin{align*}
	{\MyRoman{1}_1}\leq& C\|\nabla u^N\|^2_{L^4}(1+\|u^N\|_{L^{\infty}})\|v^N\|_{L^2}
	\leq C\|\Lambda u^N\|_{L^2}\|\nabla u^N\|_{L^2}(1+\|u^N\|_{L^{\infty}})\|v^N\|_{L^2}.
	\end{align*}
	By further applying the Gagliardo--Nirenberg inequality
	$\|u\|_{L^{\infty}}\le C\|\Lambda u\|^{\frac 14}_{L^2}\|u\|_{L^6}^{\frac 34}$ 
	and using the Young inequality, we get
	\begin{align*}
	{\MyRoman{1}_1}
	&\le C\|\Lambda u^N\|_{L^2}\|\nabla  u^N\|_{L^2}(1+\|\Lambda u^N\|_{L^2}^{\frac 14}\|u^N\|_{L^6}^{\frac 34})\|v^N\|_{L^2}\\
	&\le C\Big(\|\nabla u^N\|_{L^2}^2\|v^N\|_{L^2}^2+\|\nabla u^N\|_{L^2}^{\frac 83}\|v^N\|_{L^2}^{\frac 83}\|u^N\|_{L^6}^{2}+\|\Lambda u^N\|_{L^2}^{2}\Big).
	\end{align*}
	\iffalse
	and for $d=3$,
	\begin{align*}
	{\MyRoman{1}_1}
	&\le  C(1+\|u^N\|_{L^2}^{\frac 12}\|u^N\|_{\dot{\H}^1}^{\frac 12}\|v^N\|_{L^2})(1+\|\Lambda u^N\|_{L^2}^{2})\\
	&\le C(1+\|\Lambda u^N\|_{L^2}^{2})
	+C\big(\epsilon\|v^N\|_{L^2}^2+\epsilon\|u^N\|_{\dot \H^1}^2+\epsilon\|u^N\|_{L^4}^4+C(\epsilon)\big)\|\Lambda u^N\|_{L^2}^{2}
	\end{align*}
	with sufficiently small $\epsilon >0$.
	\fi
	Using the Cauchy--Schwarz inequality and the Young inequality and the fact that $\dot{\H}^1\hookrightarrow L^6$, we deduce that 
	\begin{align*}
	&\left|\<(-\Lambda)u^N,f(u^N)\>_{L^2}\right|\\
	\leq&
	\|(-\Lambda)u^N\|_{L^2}\|f(u^N)\|_{L^2}\leq
	\frac 12\|\Lambda u^N\|_{L^2}^2+\frac {\tilde C_1(c_0,c_1,c_2,c_3)}{2}\|u^N\|_{L^6}^6+{\tilde C_2(c_0,c_1,c_2,c_3)}\\
	\leq& \frac 12\|\Lambda u^N\|_{L^2}^2+\frac {\tilde C_1(c_0,c_1,c_2,c_3,d)}{2}\|u^N\|_{\dot{\H}^1}^6+\tilde C_2(c_0,c_1,c_2,c_3).
	\end{align*}
	The above inequality leads to $V_2(u^N,v^N)\geq \frac 14 \|\Lambda u^N\|_{L^2}^2-\frac {\tilde C_1}{4}\|u^N\|_{\dot{\H}^1}^6-\frac 12\tilde C_2,$ which yields that 
	\begin{align*}
	dV_2\leq&
	C\left(V_2+\|u^N\|_{\dot{\H}^1}^6+1\right)dt
	+C(1+\|u^N\|_{\dot{\H}^1}^8+\|v^N\|_{L^2}^8)dt\\
	&+\<\nabla v^N,\nabla P_NdW\>_{L^2}
	+\frac 12{\rm{Tr}}\left((\nabla P_N\Q^\frac 12)(\nabla P_N\Q^\frac 12)^\ast\right)dt.
	\end{align*}
	By using the inverse equality $\|u^N\|_{\dot{\H}^{2}}\le C\lambda_N \|u^N\|_{L^2}$ and the integrability of $u^N$ in $\mathbb H^1$ in  \eqref{spec;H1}, one could obtain the integrability of $V_2$ and other terms on the right hand side of the above equality. 
	Taking the expectation on both sides and applying the Gronwall inequality in \cite[Corollary 2]{Drag03}, we have
	\begin{align*}
	\E V_2(u^N(t),v^N(t))\leq& C\exp{(Ct)}
	\Big(\|X_0\|_{\mathbb H^2}^2+\frac 12{\rm{Tr}}\left((\nabla P_N\Q^\frac 12)(\nabla P_N\Q^\frac 12)^\ast\right)t\\
	&+\int_0^t\E(1+\|v^N(s)\|_{L^2}^8+\|u^N(s)\|_{\dot{\mathbb H}^1}^8)ds\Big),
	\end{align*}
	which, combined with Lemma \ref{h1}, shows the desired result. 
\end{proof}

Next we derive the H\"{o}lder continuity in temporal direction for the numerical solution $\{u^N\}_{N\in\N}$ and $\{X^N\}_{N\in\N}$ with respect to $L^p(\OOO;L^2)$-norm and $L^p(\OOO;\H)$-norm, respectively. 
Both results play a key role in our error analysis in Section \ref{sec4}.

\begin{lm}
	\label{lm;Holderu^N}
	Assume that conditions in  Lemma \ref{h1} hold. Then there exists $C:=C(X_0,\mathbf Q, T,p)>0$ such that for any $0\leq s\leq t\leq T$,
	\begin{equation*}
	\begin{split}
	\|u^N(t)-u^N(s)\|_{L^p(\OOO;L^2)}\leq C|t-s|,\quad \|X^N(t)-X^N(s)\|_{L^p(\OOO;\H)}\leq C|t-s|^{\frac 12}.
	\end{split}
	\end{equation*}
\end{lm}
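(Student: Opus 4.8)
The plan is to prove the two bounds separately, treating the $L^2$-Lipschitz estimate for $u^N$ as an easy consequence of the first equation of the system, and the $\H$-valued Hölder-$\tfrac12$ estimate for $X^N$ via a mild-solution decomposition based at time $s$.

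For the first inequality I would exploit that the first component of \eqref{mod;sga} reads $du^N=v^N\,dt$, so that pathwise $u^N(t)-u^N(s)=\int_s^t v^N(r)\,dr$. By Minkowski's inequality $\|u^N(t)-u^N(s)\|_{L^2}\le\int_s^t\|v^N(r)\|_{L^2}\,dr\le|t-s|\sup_{r\in[0,T]}\|v^N(r)\|_{L^2}$, and taking the $L^p(\OOO)$-norm gives $\|u^N(t)-u^N(s)\|_{L^p(\OOO;L^2)}\le|t-s|\,\|v^N\|_{L^p(\OOO;\CC([0,T];L^2))}$. Since $\|v^N(r)\|_{L^2}\le\|X^N(r)\|_{\H^1}$, Lemma \ref{h1} bounds the last factor by $C$, yielding the claimed Lipschitz estimate.

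For the second inequality I would use the group property $E_N(t-r)=E_N(t-s)E_N(s-r)$ to rewrite \eqref{sol;sga} as an evolution started at $s$, namely $X^N(t)=E_N(t-s)X^N(s)+\int_s^tE_N(t-r)\F_N(X^N(r))\,dr+\int_s^tE_N(t-r)\G_N\,dW(r)$, so that $X^N(t)-X^N(s)=(E_N(t-s)-I)X^N(s)+\int_s^tE_N(t-r)\F_N(X^N(r))\,dr+\int_s^tE_N(t-r)\G_N\,dW(r)$, and then estimate the three terms in $L^p(\OOO;\H)$. For the first term, since $C_N(t)P_N=C(t)P_N$ and $S_N(t)P_N=S(t)P_N$ show that $E_N$ inherits the Hölder continuity of Lemma \ref{lm;prpE_n}, I get $\|(E_N(t-s)-I)X^N(s)\|_{\H}\le C|t-s|^{1/2}\|X^N(s)\|_{\H^{1/2}}\le C|t-s|^{1/2}\|X^N(s)\|_{\H^{1}}$, whose $L^p(\OOO)$-norm is controlled by Lemma \ref{h1}. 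For the drift term, unitarity of $E_N$ on $\H$ together with $\|\F_N(X^N)\|_{\H}=\|P_Nf(u^N)\|_{\dot\H^{-1}}\le\lambda_1^{-1/2}\|f(u^N)\|_{L^2}$ and the polynomial growth of $f$ combined with the Sobolev embedding $\dot\H^1\hookrightarrow L^6$ (valid for $d\le2$) give $\|\F_N(X^N(r))\|_{\H}\le C(1+\|u^N(r)\|_{\dot\H^1}^3)$; hence this term is bounded by $C|t-s|(1+\sup_r\|u^N(r)\|_{\dot\H^1}^3)$ in $\H$, and after taking the $L^p(\OOO)$-norm and invoking Lemma \ref{h1} with exponent $3p$, by $C|t-s|\le CT^{1/2}|t-s|^{1/2}$.

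The remaining stochastic convolution $\int_s^tE_N(t-r)\G_N\,dW(r)$ requires the most care, and I would handle it with the Burkholder--Davis--Gundy inequality: $\E\|\int_s^tE_N(t-r)\G_N\,dW(r)\|_{\H}^p\le C_p\,\E\big(\int_s^t\|E_N(t-r)\G_N\Q^{1/2}\|_{\LL_2(L^2,\H)}^2\,dr\big)^{p/2}$. Using again unitarity of $E_N$ on $\H$ and $\|\G_N\Q^{1/2}e_k\|_{\H}=\|P_N\Q^{1/2}e_k\|_{\dot\H^{-1}}\le\lambda_1^{-1/2}\|\Q^{1/2}e_k\|_{L^2}$, the Hilbert--Schmidt norm is bounded uniformly in $r$ and $N$ by $\lambda_1^{-1/2}\tr(\Q)^{1/2}<\infty$, so the integrand integrates to at most $C|t-s|$ and this term is $O(|t-s|^{1/2})$ in $L^p(\OOO;\H)$. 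Summing the three contributions gives the $\tfrac12$-Hölder bound. The main point to get right is the uniform-in-$N$ control of the cubic drift, which hinges on coupling the polynomial bound for $f$ with the embedding $\dot\H^1\hookrightarrow L^6$ and the moment estimate of Lemma \ref{h1}; the stochastic term is then routine given the unitary property of $E_N$ and the finite trace of $\Q$.
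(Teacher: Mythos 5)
Your proof is correct, but it takes a genuinely different route from the paper's. For the Lipschitz bound on $u^N$, the paper expands $u^N(t)-u^N(s)$ through the mild formula based at time $0$, producing six terms (differences of initial-data terms, two drift integrals, two stochastic convolutions) that are estimated with the operator H\"older bounds of Lemma \ref{lm;prpE_n} and the Burkholder--Davis--Gundy inequality; you instead integrate the pathwise identity $du^N=v^N\,dt$ and invoke only the uniform moment bound of Lemma \ref{h1}, which is shorter and avoids Lemma \ref{lm;prpE_n} and BDG entirely for this component. For the $\H$-valued H\"older-$\tfrac12$ bound, the paper again works from the base point $0$ with operator differences (it only sketches this part as ``similar''), whereas you restart the mild solution at time $s$ via the cocycle identity $X^N(t)=E_N(t-s)X^N(s)+\int_s^t E_N(t-r)\F_N(X^N(r))\,dr+\int_s^t E_N(t-r)\G_N\,dW(r)$, which cleanly isolates the one term that actually caps the exponent at $\tfrac12$, namely the stochastic convolution over $[s,t]$ measured in the $\dot\H^{-1}$ component. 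Your decomposition is arguably more transparent about where each power of $|t-s|$ comes from; the paper's template has the advantage of treating both components by the same mechanism and of keeping every operator written with $P_N$, so the $N$-uniformity is visible term by term. Two small points to make explicit in a polished write-up: the transfer of Lemma \ref{lm;prpE_n} from $E$ to $E_N$ (which you correctly justify via $C_N(t)P_N=C(t)P_N$, $S_N(t)P_N=S(t)P_N$), and the reduction of the case $1\le p<2$ to $p\ge2$ by Jensen's inequality, since Lemma \ref{h1} and BDG are stated for $p\ge2$.
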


\begin{proof}
	From \eqref{mod;sga}, we have
	\begin{align*}
	u^N(t)-u^N(s)=&(C_N(t)-C_N(s))P_N(u_0)+(-\Lambda_N)^{-\frac 12}(S_N(t)-S_N(s))P_N(v_0)\\
	&-\int_0^s(-\Lambda_N)^{-\frac 12}(S_N(t-r)-S_N(s-r))P_N(f(u^N))dr\\
	&-\int_s^t(-\Lambda_N)^{-\frac 12}S_N(t-r)P_N(f(u^N))dr\\
	&+\int_0^s(-\Lambda_N)^{-\frac 12}(S_N(t-r)-S_N(s-r))P_NdW(r)\\
	&+\int_s^t(-\Lambda_N)^{-\frac 12}S_N(t-r)P_NdW(r).
	\end{align*}
	Using the properties of $C_N(t)$ and $S_N(t)$ in Lemma \ref{lm;prpE_n} and the Burkholder--Davis--Gundy inequality gives 
	\begin{align*}
	&\|u^N(t)-u^N(s)\|_{L^p(\OOO;L^2)}\\
	\leq& C|t-s|\left(\|u_0\|_{L^p(\OOO;\dot{\H}^1)}+\|v_0\|_{L^p(\OOO;L^2)}\right)\\
	&+C\int_0^s(t-s)\|f(u^N)\|_{L^p(\OOO;L^2)}ds
	+C\int_s^t\|f(u^N)\|_{L^p(\OOO;\dot{\H}^{-1})}ds\\
	&+\left(\int_0^s\|(-\Lambda_N)^{-\frac 12}(S_N(t-r)-S_N(s-r))P_N\mathbf Q^\frac 12\|_{\LL_2(L^2)}^2dr\right)^{\frac 12}\\
	&+\left(\int_s^t\|(-\Lambda_N)^{-\frac 12}S_N(t-r)P_N\mathbf Q^\frac12\|_{\LL_2(L^2)}^2dr\right)^{\frac 12}\\
	\leq& C|t-s|\left(1+\|u_0\|_{L^p(\OOO;\dot{\H}^1)}+\|v_0\|_{L^p(\OOO;L^2)}+\sup_{0\leq t\leq T}\|u^N(t)\|^3_{L^{3p}(\OOO;\dot{\H}^1)}\right)\le C|t-s|,
	\end{align*}
	which is the claim for $u^N$.
	For the estimate of $X^N$, the proof is similar. 
	\iffalse
	The main difference lies on
	\begin{align*}
	\E\big[\|\int_s^t(-\Lambda)^{-\frac 12}C(t-r)dW(r)\|^{2p}\big]
	&\le C\E\big[(\int_s^t\|(-\Lambda)^{-\frac 12}C(t-r)\|_{\mathcal L_2(\dot{\mathbb H})}^{2}dr)^p\big]\le C(t-s)^p,
	\end{align*}
	which completes the proof.
	\fi
\end{proof}

\subsection{Exponential integrability of stochastic wave equation}
In this part, we first prove that the spectral Galerkin approximation $X_N$ converges to the solution of \eqref{mod;swe1} in strong sense 
based on Lemma \ref{explemma;solu}.

\begin{prop}\label{con-spa}
	Assume that $d=1$, $\beta\ge 1$ {\rm (}or $d=2$, $\beta=2${\rm )}, $X_0\in \H^\beta,$ $T>0$ and $\|(-\Lambda)^{\frac {\beta-1}2}\Q^\frac 12\|_{\mathcal L_2(L^2)}$$<\infty$.
	Then for any $p\geq 2,$ \eqref{mod;sga} satisfies that 
	\begin{align*}
	&\|X^N-X\|
	_{ L^p(\OOO;\mathcal C([0,T];\mathbb H))}
	=O(\lambda_N^{-\frac \beta 2}).
	\end{align*}
\end{prop}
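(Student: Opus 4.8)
The plan is to exploit the commutation relations $C_N(t)P_N=P_NC(t)$ and $S_N(t)P_N=P_NS(t)$, which give $E_N(t)P_N=P_NE(t)$, and to split the error as $X-X^N=(I-P_N)X+(P_NX-X^N)$. For the projection part, the spectral characterization of the $\dot\H^r$-norms yields $\|(I-P_N)X(t)\|_{\H}\le C\lambda_N^{-\beta/2}\|X(t)\|_{\H^\beta}$, so this term is $O(\lambda_N^{-\beta/2})$ once one knows $X\in L^p(\OOO;\mathcal C([0,T];\H^\beta))$; this regularity of the exact solution is inherited from the $N$-uniform bounds of Lemma \ref{h1} and Proposition \ref{d1-reg} (and its $d=2$ analogue) by a lower-semicontinuity argument once convergence is in hand. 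Applying $P_N$ to the mild form \eqref{sol;swe} and using $E_N(t)P_N=P_NE(t)$ together with $P_N\G=\G_N$, the initial-data and stochastic-convolution contributions to $P_NX-X^N$ cancel exactly, leaving
\[
P_NX(t)-X^N(t)=-\int_0^tE_N(t-s)\begin{bmatrix}0\\ P_N\big(f(u(s))-f(u^N(s))\big)\end{bmatrix}ds.
\]

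Next I would estimate the nonlinear term. Since $\|E_N(t-s)\|_{\LL(\H)}=1$ and $P_N$ is a contraction on each $\dot\H^r$, it suffices to bound $\|f(u)-f(u^N)\|_{\dot\H^{-1}}$. Using the embedding $L^{6/5}\hookrightarrow\dot\H^{-1}$ (valid for $d\le 2$), the factorization $f(u)-f(u^N)=(u-u^N)\int_0^1f'(u^N+\theta(u-u^N))d\theta$ with $|f'(\cdot)|\le C(1+|u|^2+|u^N|^2)$, and Hölder's inequality, I get
\[
\|f(u)-f(u^N)\|_{\dot\H^{-1}}\le C\big(1+\|u\|_{L^6}^2+\|u^N\|_{L^6}^2\big)\|u-u^N\|_{L^2}.
\]
Bounding $\|u-u^N\|_{L^2}\le\|(I-P_N)u\|_{L^2}+\|P_NX-X^N\|_{\H}$ and $\|(I-P_N)u(s)\|_{L^2}\le C\lambda_N^{-\beta/2}\|u(s)\|_{\dot\H^\beta}$ then produces, for $e^N(t):=\sup_{r\le t}\|P_NX(r)-X^N(r)\|_{\H}$, an integral inequality $e^N(t)\le C\lambda_N^{-\beta/2}\int_0^T\rho^N(s)\|u(s)\|_{\dot\H^\beta}ds+\int_0^t\rho^N(s)e^N(s)ds$ with random weight $\rho^N(s)=C(1+\|u(s)\|_{L^6}^2+\|u^N(s)\|_{L^6}^2)$.

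I would then apply the Gronwall inequality pathwise to obtain $e^N(T)\le C\lambda_N^{-\beta/2}\big(\int_0^T\rho^N\|u\|_{\dot\H^\beta}ds\big)\exp\big(\int_0^T\rho^N ds\big)$, and take the $L^p(\OOO)$-norm using Hölder's inequality to decouple the polynomial factor from the exponential one. The polynomial factor is controlled in every $L^q(\OOO)$ by Lemma \ref{h1}, Proposition \ref{d1-reg} and the exact solution's $\H^\beta$-regularity, while the exponential factor $\E\exp\big(q\int_0^T(\|u^N\|_{L^6}^2+\|u\|_{L^6}^2)ds\big)$ is bounded uniformly in $N$ by the exponential integrability Corollary \ref{cor;EIsolu} (for $u^N$ uniformly in $N$, and for $u$ after passing to the limit). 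This yields $\|X^N-X\|_{L^p(\OOO;\mathcal C([0,T];\H))}=O(\lambda_N^{-\beta/2})$.

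The main obstacle is the cubic, non-globally-Lipschitz nonlinearity: it rules out a deterministic Gronwall constant and forces the random weight $\rho^N$, so the argument hinges on the $N$-uniform exponential integrability of Corollary \ref{cor;EIsolu} to absorb $\E\exp(q\int_0^T\|u^N\|_{L^6}^2ds)$. A subtler point is the apparent circularity that the exponential factor also involves the exact solution $u$, whose exponential integrability is itself a target of this section; I would resolve this by first establishing convergence in probability through a localization (stopping-time) argument using only the uniform moment bounds, then transferring the exponential integrability of $u^N$ to $u$ by Fatou's lemma, and finally rerunning the estimate above to obtain the sharp rate $\lambda_N^{-\beta/2}$.
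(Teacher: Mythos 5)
Your proposal is mathematically sound, and its analytical core --- the exact cancellation of the initial-data and stochastic-convolution terms after projecting the mild form by $P_N$, the embedding $L^{6/5}\hookrightarrow\dot\H^{-1}$ giving $\|f(u)-f(u^N)\|_{\dot\H^{-1}}\le C(1+\|u\|_{L^6}^2+\|u^N\|_{L^6}^2)\|u-u^N\|_{L^2}$, the pathwise Gronwall inequality with a random weight, and the H\"older decoupling absorbed by exponential integrability --- coincides with the paper's. The structural difference is how the two arguments escape the circularity you correctly flagged. You compare $X^N$ directly with the exact solution $X$, which forces you to invoke the $\H^\beta$-regularity and the exponential integrability of $X$ before either has been established, and you patch this with a three-pass bootstrap (a stopping-time localization yielding convergence in probability, a Fatou transfer of the properties from $u^N$ to $u$, then a rerun of the quantitative estimate). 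The paper never touches the exact solution in the quantitative step: it compares two Galerkin approximations, writing $u^N-u^{M'}=(u^N-P_Nu^{M'})+(P_N-I)u^{M'}$ for $M'>N$, so every norm entering the Gronwall weight and the projection error is a norm of a Galerkin solution, for which Proposition \ref{d1-reg} and Corollary \ref{cor;EIsolu} give bounds uniform in $N$. This proves in a single pass that $\{X^N\}$ is Cauchy in $L^p(\OOO;\mathcal C([0,T];\H))$ with the rate $\lambda_N^{-\beta/2}$; the limit is then identified (Step 2 of the paper's proof) as the mild solution, and the exponential integrability and higher regularity of $u$ drop out afterwards by Fatou's lemma --- they are outputs of the proof rather than inputs. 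Both routes yield the same rate: yours is conceptually more direct but relies on the externally cited well-posedness of $X$ and on extra localization machinery whose details (definition of the stopping times, handling the projection error without $\H^\beta$-regularity of $X$ in the first pass) would still need to be written out carefully; the paper's Cauchy-sequence device is self-contained and simultaneously delivers the exponential integrability of the exact solution, which is the stated goal of the section.
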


\begin{proof}
	For the sake of simplicity, we take the first component of $X^N$ as an example to illustrate the desired result.
	
	{\em Step 1: Strong convergence and limit of $u^N$.}
	We claim that $\{u^N\}_{N\in\N^+}$ is a Cauchy sequence in $ L^p(\OOO;\mathcal C([0,T];L^2)).$
	Notice that
	\begin{align*}
	u^N(t)-u^{M'}(t)&=\left(u^N(t)-P_Nu^{M'}(t)\right)+\left((P_N-I)u^{M'}(t)\right),
	\end{align*}
	where $N,{M'}\in \N^+.$
	\iffalse
	and
	\begin{align*}
	u^N(t)=&C_N(t)u^N_0+(-\Lambda)^{(-\frac 12)}S_N(t)v^N_0-
	\int_0^t(-\Lambda)^{(-\frac 12)}S(t-s)P_Nf(u^N)ds\\
	&+
	\int_0^t(-\Lambda)^{(-\frac 12)}S(t-s)P_NdW(s),\\
	u^M(t)=&C_M(t)u^M_0+(-\Lambda)^{(-\frac 12)}S_M(t)v^M_0-
	\int_0^t(-\Lambda)^{(-\frac 12)}S(t-s)P_Mf(u^M)ds\\
	&+
	\int_0^t(-\Lambda)^{(-\frac 12)}S(t-s)P_MdW(s).
	\end{align*}
	\fi
	Without loss of generality, it may be assumed that ${M'}>N.$
	According to the expression of both $u^N$ and $u^{M'},$ using the definition of $P_N,$ we have
	\begin{align*}
	\|(P_N-I)u^{M'}(t)\|_{L^2}^2=\sum\limits_{i=N+1}^{\infty}
	\lambda_i^{-\beta}\<u^{M'}(t),\lambda_i^{\frac\beta 2}e_i\>^2_{L^2}
	\leq \lambda_N^{-\beta}\|u^{M'}(t)\|_{{\dot \H}^\beta}^2
	\end{align*}
	with $\beta\geq 1.$
	With respect to the term $u^N(t)-P_Nu^{M'}(t),$ we have
	\begin{align*}
	u^N(t)-P_Nu^{M'}(t)
	=&\int_0^t(-\Lambda)^{(-\frac 12)}S(t-s)P_N\left(f(u^{M'})-f(u^N)\right)ds.
	\end{align*}
	From the Sobolev embedding $L^\frac 65\hookrightarrow \dot{\H}^{-1}$, and using the H\"older inequality,
	\begin{align*}
	&\|u^N(t)-P_Nu^{M'}(t)\|_{L^2}\leq \int_0^t\left\|(-\Lambda)^{(-\frac 12)}S(t-s)P_N\left(f(u^{M'})-f(u^N)\right) \right\|_{L^2}ds\\
	\leq &C\int_{0}^t
	(1+\|u^N\|_{L^6}^2+\|u^{M'}\|_{L^6}^2)\Big(\|u^N-P_Nu^{M'}\|_{L^2}+\|(P_N-I)u^{M'}\|_{L^2}\Big)ds,
	\end{align*}
	which implies
	\begin{align*}
	&\|u^N(t)-P_Nu^{M'}(t)\|_{L^2}\\
	\leq &
	C\lambda_N^{-\frac\beta 2}\exp\left(\int_0^T \left(\|u^N\|_{L^6}^2+
	\|u^{M'}\|_{L^6}^2\right)ds\right)\int_{0}^t
	(1+\|u^N\|_{L^6}^2+
	\|u^{M'}\|_{L^6}^2)\|u^{M'}\|_{{\dot \H}^\beta}ds
	\end{align*}
	due to Gronwall's inequality.
	Taking the $p$th moment and then using the  H\"older and the Young inequalities, and Corollary \ref{cor;EIsolu}, we obtain
	\begin{align*}
	&\|u^N-u^{M'}\|_{ L^p(\OOO;\mathcal C([0,T];L^2))}
	\\
	\leq&
	C\lambda_N^{-\frac\beta 2}\Big\|\exp\left(\int_0^T \left(\|u^N\|_{L^6}^2+
	\|u^{M'}\|_{L^6}^2\right)ds\right)\Big\|_{L^{2p}{(\Omega;\R)}}\\
	&\cdot
	\Big\|\int_{0}^T
	(1+\|u^N\|_{L^6}^2+
	\|u^{M'}\|_{L^6}^2)\|u^{M'}\|_{{\dot \H}^\beta}ds\Big\|_{L^{2p}{(\Omega;\R)}}+\lambda_N^{-\frac{\beta}2}\|u^{M'}\|_{L^p(\Omega; \mathcal C([0,T];{\dot \H}^\beta))},
	\end{align*}
	which leads to
	\begin{align*}
	\|u^N-u^{M'}\|_{ L^p(\OOO;\mathcal C([0,T];L^2))}
	\leq C\lambda_{N}^{-\frac\beta 2}.
	\end{align*}
	Similarly, we can prove that $\{v^{M'}\}_{{M'}\in \N^+}$ is a Cauchy sequence in $ L^p(\OOO;\mathcal C([0,T];\dot\H^{-1}))$ which means that $\{X^{M'}\}_{{M'}\in \N^+}$ is a Cauchy sequence in $ L^p(\OOO;\mathcal C([0,T];\H)).$ 
	Denote by $X=(u,v)^{\top}\in \H$ the limit of $\{X^{M'}\}_{{M'}\in \N^+}$.
	Due to Propositions \ref{d1-reg} and Fatou's lemma, we  have  $\E[\|X\|_{\mathcal C([0,T];\H^1)}^p]\le C(X_0,\Q,T,p)$. From the Gagliardo--Nirenberg inequality and the boundedness of $X$ and $X^N$ in $ L^p(\OOO;\mathcal C([0,T];\H^{1})),$ it follows that $u^N$ converges to $u$ in $ L^p(\OOO;\mathcal C([0,T];L^6)).$
	By Jensen's inequality and Fatou's lemma, we have
	\begin{align*}
	\E\left(\exp\left(\int_0^Tc\|u(s)\|_{L^6}^2ds\right)\right)
	&\leq \frac 1T\int_{0}^T
	\E\exp\left(cT\|u(s)\|_{L^6}^2\right)ds\\
	&\leq\varliminf _{N\rightarrow\infty} \frac 1T\int_{0}^T
	\E\exp\left(cT\|u^N(s)\|_{L^6}^2\right)ds.
	\end{align*}
	Then the similar procedure on the proof of Corollary \ref{cor;EIsolu} leads that for any $c>0$,
	\begin{align*}
	\E\left(\exp\left(\int_0^Tc \|u(s)\|_{L^6}^2ds\right)\right)
	&<\infty.
	\end{align*}
	
	{\em Step 2: Existence and uniqueness of the mild solution.}
	To show that the strong limit $X$ is the mild solution of \eqref{mod;swe}, it suffices to prove that
	\begin{align}
	\label{sol;swe1}
	X(t)=E(t)X_0+\int_0^tE(t-s)\F(X(s))ds+
	\int_0^tE(t-s)\G dW(s)
	\end{align}
	for any $t\in[0,T].$
	We take the convergence of $\{u^N\}_{N\in\N^+}$ as an example for convenience, that is, to show that $u$ satisfies
	\begin{align*}
	u(t)=&C(t)u_0+(-\Lambda)^{(-\frac 12)}S(t)v_0-
	\int_0^t(-\Lambda)^{(-\frac 12)}S(t-s)f(u(s))ds\\
	&+\int_0^t(-\Lambda)^{(-\frac 12)}S(t-s)dW(s).
	\end{align*}
	To this end, we show that the mild form of the exact solution $u^N$ is convergent to that of $u$.
	The assumption on $X_0$ yields that
	\begin{align*}
	\|C(t)(I-P_N)u_0\|_{L^2}+\|(-\Lambda)^{(-\frac 12)}S(t)(I-P_N)v_0\|_{L^2}
	&\le C\lambda_N^{-\frac \beta 2}(\|u_0\|_{\dot \H^{\beta}}+\|v_0\|_{\dot \H^{\beta-1}}).
	\end{align*}
	Based on the Sobolev embedding $L^{\frac 65}\hookrightarrow\dot{\H}^{-1},$
	we have
	\begin{align*}
	&\left\|\int_0^t(-\Lambda)^{(-\frac 12)}S(t-s)P_N\left(f(u)-f(u^N)\right)ds\right\|_{L^p(\Omega;\mathcal C([0,T];L^2))}\\
	\leq&
	C\int_{0}^T
	\|(1+\|u\|_{L^6}^2+\|u^N\|_{L^6}^2)\|_{L^{2p}(\Omega;\R)}\|u-u^N\|_{L^{2p}(\Omega;L^2)}ds\le C\lambda_{N}^{-\frac \beta 2}.
	\end{align*}
	For the stochastic term, by the Burkholder--Davis--Gundy inequality we obtain that for $p\ge 2$,
	\iffalse
	\begin{align*}
	&\|\int_0^t(-\Lambda)^{(-\frac 12)}S(t-s)(I-P_N)dW(s)\|_{L^p(\Omega;L^2)}\\
	&\le \sqrt{\int_0^t\|(-\Lambda)^{(-\frac 12)}S(t-s)(-\Lambda)^{(-\frac \beta 2)}(I-P_N)(-\Lambda)^{(\frac \beta 2-\frac 12)}\|_{L^p(\Omega;\mathcal L_2^0)}^2ds}\\
	&\le C\lambda_N^{-\frac 12-\frac \beta 2}.
	\end{align*}
	Then summing up all the above estimates and using the H\"older inequality
	finish the proof.
\end{proof}

\begin{proof}
	Under the same conditions of Theorem \ref{con-spa},
	one could get the following stronger result,\begin{align*}
	&\|X_N-X\|_{L^2(\OOO;\mathcal C([0,T];\H))}=O(\lambda_N^{-\frac \beta2}).
	\end{align*}
	The main difference of the proof compared to that of Theorem \ref{con-spa} lies on dealing with the stochastic term.
	For instance, we use the properties of trigonometric functions and Burkholder--Davis--Gundy inequality and get that for $p\ge 2$,
	
	\fi
	\begin{align*}
	&\|\int_0^t(-\Lambda)^{(-\frac 12)}S(t-s)(I-P_N)dW(s)\|_{L^p(\Omega;\mathcal C([0,T];L^2))}\\
	\le& C\|\int_0^t(-\Lambda)^{(-\frac 12)}C(s)(I-P_N)dW(s)\|_{L^p(\Omega;\mathcal C([0,T];L^2))}\\
	&+C\|\int_0^t(-\Lambda)^{(-\frac 12)}S(s)(I-P_N)dW(s)\|_{L^p(\Omega;\mathcal C([0,T];L^2))}
	\le C\lambda_N^{-\frac \beta 2}.
	\end{align*}
	Combining the above estimates, we complete the proof.
\end{proof}

From the proof of Proposition \ref{con-spa}, we have the following exponential integrability property of the exact solution.
\begin{prop}
	Let  $d=1,2,$ $X_0\in \H^1$ and $\Q^{\frac 12}\in \mathcal L_2(L^2).$ For any $c\in\mathbb R$ and $T>0$, it holds that $
	\E\left(\exp\left(\int_0^T c\|u(s)\|_{L^6}^2ds\right)\right)<\infty.
	$
\end{prop}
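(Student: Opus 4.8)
The plan is to transfer the uniform exponential integrability of the spectral Galerkin approximations (Corollary \ref{cor;EIsolu}) to the strong limit $u$, reusing the machinery already assembled in the proof of Proposition \ref{con-spa}. The range $c\le 0$ is immediate: then $c\|u(s)\|_{L^6}^2\le 0$, so $\exp\big(\int_0^T c\|u(s)\|_{L^6}^2\,ds\big)\le 1$ and its expectation is finite. It therefore suffices to treat $c>0$.

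For $c>0$ I would first record the strong convergence $u^N\to u$ in $L^p(\OOO;\mathcal C([0,T];L^6))$. This is supplied by Step 1 of the proof of Proposition \ref{con-spa}: under $X_0\in\H^1$ the interpolation index $\beta=1$ together with the $\H^1$-bound of Lemma \ref{h1} and the uniform exponential moment of Corollary \ref{cor;EIsolu} already force $u^N$ to be Cauchy, hence convergent, in $L^p(\OOO;\mathcal C([0,T];L^2))$ for both $d=1$ and $d=2$; the Gagliardo--Nirenberg interpolation $\|w\|_{L^6}\le C\|\nabla w\|_{L^2}^{d/3}\|w\|_{L^2}^{1-d/3}$ and the uniform $\H^1$-boundedness of $u^N$ and $u$ then upgrade this to $L^6$. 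In particular, along a subsequence, $\|u^{N_k}(s)\|_{L^6}\to\|u(s)\|_{L^6}$ for every $s\in[0,T]$, $\P$-almost surely.

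The quantitative step mirrors the proof of Corollary \ref{cor;EIsolu}. Jensen's inequality applied to the normalized time average gives
\begin{align*}
\E\left(\exp\left(\int_0^T c\|u(s)\|_{L^6}^2\,ds\right)\right)
\le \frac 1T\int_0^T \E\exp\left(cT\|u(s)\|_{L^6}^2\right)\,ds,
\end{align*}
and for each fixed $s$ Fatou's lemma together with the almost sure subsequential convergence yields
\begin{align*}
\E\exp\left(cT\|u(s)\|_{L^6}^2\right)
\le \varliminf_{k\to\infty}\E\exp\left(cT\|u^{N_k}(s)\|_{L^6}^2\right).
\end{align*}
I would then bound the right-hand side uniformly in $k$ exactly as in Corollary \ref{cor;EIsolu}: since $d\le 2$ the exponent $a=d/3$ satisfies $a<1$, so the Young inequality absorbs $cT\|u^{N_k}(s)\|_{L^6}^2$ into $\tfrac 12 e^{-\alpha s}\|\nabla u^{N_k}(s)\|_{L^2}^2+\epsilon\|u^{N_k}(s)\|_{L^4}^4$, hence into $e^{-\alpha s}V_1(u^{N_k}(s),v^{N_k}(s))$, and the $N$-uniform bound of Lemma \ref{explemma;solu} closes the estimate.

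The step I expect to be the main obstacle is the interchange of limit and expectation. This is exactly where the strong convergence from Proposition \ref{con-spa} is indispensable: it provides the almost sure convergence of $\|u^{N_k}(s)\|_{L^6}$ that legitimizes Fatou's lemma, while Corollary \ref{cor;EIsolu} controls the resulting $\varliminf$. The dimensional restriction $d\le 2$ is not cosmetic but enters through the Gagliardo--Nirenberg exponent $a=d/3<1$, which makes the $L^6$-norm controllable by a sub-quartic amount of the Lyapunov energy $V_1$; for $d\ge 3$ this absorption fails and the present route breaks down.
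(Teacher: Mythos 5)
Your proposal is correct and follows essentially the same route as the paper: the paper also derives this proposition directly from Step 1 of the proof of Proposition \ref{con-spa} (strong convergence of $u^N$ to $u$ in $L^p(\OOO;\mathcal C([0,T];L^6))$ via the uniform $\H^1$ bounds and Gagliardo--Nirenberg), followed by Jensen's inequality, Fatou's lemma, and the absorption argument of Corollary \ref{cor;EIsolu} together with Lemma \ref{explemma;solu}. Your additional observations — the trivial case $c\le 0$, the explicit almost sure subsequential convergence justifying Fatou, and the remark that only the $\beta=1$ machinery (valid for $d\le 2$ under the stated hypotheses) is needed rather than the full hypotheses of Proposition \ref{con-spa} — are sound refinements of the same argument.
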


\iffalse
\begin{rk}
	For the case that $d=3,$ $\{u^M\}_{M\in \N^+}$ is only strongly convergent, since the mild solution \eqref{sol;sga} of the spectral Galerkin method in this case does not have exponential integrability in $L^6$. 
	By similar arguments in Proposition \ref{con-spa}, 
	we can prove that {\color{blue} the mild solution of \eqref{mod;swe} satisfies} $\|X\|_{L^{p}(\Omega;\mathbb H^{\beta})}\le C(X_0,\mathbf Q,T,p)$ if $p\ge1$, $d=1$, $\beta\ge 1$, $X_0\in \H^\beta$,  $\|(-\Lambda)^{\frac {\beta-1}2}\Q^\frac 12\|_{\mathcal L_2(L^2)}$$<\infty$.
	Moreover, the mild solution satisfies $\|X\|_{L^{p}(\Omega;\mathbb H^{2})}\le C(X_0,\mathbf Q,T,p),$ when $d=2,3$, $X_0\in \H^2$,  $\|(-\Lambda)^{\frac {1}2}\Q^\frac 12\|_{\mathcal L_2(L^2)}$$<\infty$.
\end{rk}
\fi

\section{An energy-preserving exponentially integrable full discrete method}
\label{sec4}

The stochastic wave equation with Lipschitz and regular coefficients has been systematic investigated theoretically and numerically (see e.g., \cite{ACLW16,DS09,KLS10,PZ00} and references therein).
However, as far as we know, for stochastic wave equations with non-globally Lipschitz coefficients, there are no results about the full discretization preserving both the energy evolution law and the exponential integrability property and the related strong convergence analysis.
In this section, we propose an energy-preserving exponentially integrable  full discretization for stochastic wave equation \eqref{mod;swe} by applying splitting AVF method to \eqref{mod;sga}, and finally obtain a strong  convergence theorem for the full discrete numerical method.
Let $N,M \in \mathbb N^{+}$ and $T=Mh$ and denote $\Z_{M+1}=\{0,1,\dots,M\}$. 
For any $T>0$, we partition the time domain $[0,T]$ uniformly with nodes $t_m=mh,$ $m=0,1,\cdots M$ for simplicity. One may use the non-uniform discretization, and the analysis of both the convergence and structure-preserving properties is similar.

We first decompose \eqref{mod;swe} into a deterministic system on $[t_m,t_{m+1}],$
\begin{align}
\label{mol;dswe1}
&du^{N,D}(t) = v^{N,D} (t) dt,\quad
dv^{N,D} (t) =\Lambda_N u^{N,D}(t)  dt-P_N(f(u^{N,D}(t) ))dt,
\end{align}
and a stochastic system on $[t_m,t_{m+1}],$
\begin{align}
\label{mol;dswe2}
&du^{N,S}(t) = 0,\quad
dv^{N,S} (t)=P_NdW(t),\\
&u^{N,S} (t_m) =u^{N,D} (t_{m+1}),\quad
v^{N,S} (t_m)=v^{N,D} (t_{m+1}).\nonumber
\end{align}
Then on each subinterval  $[t_m,t_{m+1}]$, 
$u^{N,S}(t)$ starting from $u^{N,S}(t_m)=u^{N,D}(t_{m+1})$ and $v^{N,S}(t)$ starting from $v^{N,S}(t_m)=v^{N,D}(t_{m+1})$ 
can be informally viewed as approximations of $u^N(t)$ with $u^N(t_m)=u^{N,D}(t_m)$ and $v^N(t)$ with $v^N(t_m)=v^{N,D}(t_{m})$ in \eqref{mod;swe}, respectively.
By further using  the explicit solution of \eqref{mol;dswe2} and the AVF method to discretize \eqref{mol;dswe1}, we obtain the splitting AVF method
\begin{equation}
\begin{split}
\label{sche;sbes}
u^N_{m+1}=&u^N_{m}+h\bar v^N_{m+\frac 12},\\
\bar v^N_{m+1}=&v^N_m+h\Lambda_N u^N_{m+\frac 12}-hP_N\left(\int_0^1f(u^N_m+\theta(u^N_{m+1}-u^N_m))d\theta \right),\\
v^N_{m+1}=&\bar v^N_{m+1}+P_N\delta W_{m},
\end{split}
\end{equation}
where $u_0^N=P_Nu_0,v_0^N=P_Nv_0$, $\bar v^N_{m+\frac 12}=\frac 12(\bar v^N_{m+1}+v^N_{m})$, $u^N_{m+\frac 12}=\frac 12(u^N_{m+1}+u^N_{m})$ and the increment $\delta W_{m}:=W(t_{m+1})-W(t_{m})=\sum_{k=1}^{\infty}(\beta_k(t_{m+1})-\beta_k(t_m))\Q^{\frac{1}{2}}e_k.$

Denote 
$$\AA(t)=\begin{pmatrix}
I & \frac t2 I\\
\Lambda_N\frac t2 & I
\end{pmatrix},\quad
\B(t)=\begin{pmatrix}
I & -\frac t2 I\\
-\Lambda_N\frac t2 & I
\end{pmatrix}$$
and $\MM(t)=I-\Lambda_N\frac {t^2}4$. Then we have
\begin{align*}
\B^{-1}(t)\AA(t)=
\begin{pmatrix}
\MM^{-1}(t) & 0\\
0 & \MM^{-1}(t)
\end{pmatrix}
\AA^2(t)
=\begin{pmatrix}
2\MM^{-1}(t)-I & \MM^{-1}(t)t\\
\MM^{-1}(t)\Lambda_Nt & 2\MM^{-1}(t)-I
\end{pmatrix}.
\end{align*}
This formula yields  that \eqref{sche;sbes} can be rewritten as
\begin{align}
\label{sch;mild2}
\begin{pmatrix}
u^N_{m+1}\\
v^N_{m+1}
\end{pmatrix}
&=\B^{-1}(h)\AA(h)
\begin{pmatrix}
u^N_{m}\\
v^N_{m}
\end{pmatrix}\\
&+\B^{-1}(h)
\begin{pmatrix}
0\\
-hP_N\left(\int_0^1f(u^N_m+\theta(u^N_{m+1}-u^N_m))d\theta\right)
\end{pmatrix}
+\begin{pmatrix}
0\\
P_N(\delta W_{m})
\end{pmatrix}\nonumber.
\end{align}

For convenience, we assume that there exists a sufficiently small $h_0>0$ which is not depending on $N$ such that the numerical solution of \eqref{sche;sbes} exists and is unique  (see more details in Appendix). Throughout this section, we always require that the temporal step size $h\le h_0.$ 
When performing the numerical scheme \eqref{sche;sbes}, some iterations procedures are used to approximate \eqref{sche;sbes} since the AVF scheme is implicit. 
%The error estimate of fixed point iterations are also presented in Appendix. 
%We also remark that one could construct other type splitting schemes and follow the same steps in this section to obtain their  convergence rate, see Appendix for one example of splitting semi-implicit discretization. 
To study the strong convergence of the proposed numerical method, we first give some estimates of the matrix $\B^{-1}(\cdot)\AA(\cdot).$

\begin{lm}
	\label{lm;BINAest}
	For any $r\geq 0,$ $t\geq 0,$ and $w\in \H^r,$ 
	$
	\|\B^{-1}(t)\AA(t)w\|_{\H^r}=\|w\|_{\H^r}.
	$
\end{lm}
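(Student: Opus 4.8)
The plan is to identify $\B^{-1}(t)\AA(t)$ as the Cayley transform of a skew-adjoint operator and to invoke the fact that such transforms are isometries. Since $\AA(t),\B(t)$ are built from $\Lambda_N$, I work on $U_N\times U_N$, which is where the iterates of \eqref{sche;sbes} actually live; there the operator $A_N=\begin{pmatrix}0 & I\\ \Lambda_N & 0\end{pmatrix}$ is well defined and one checks directly that
\[
\AA(t)=I+\tfrac t2 A_N,\qquad \B(t)=I-\tfrac t2 A_N,
\]
so that $\B^{-1}(t)\AA(t)=(I-\tfrac t2 A_N)^{-1}(I+\tfrac t2 A_N)$. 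The inverse exists because $\B(t)$ reduces to the positive definite operator $\MM(t)=I+(-\Lambda_N)\tfrac{t^2}4$ recorded just before \eqref{sch;mild2}.

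The key step is to prove that $A_N$ is skew-adjoint for the inner product $\<\cdot,\cdot\>_{\H^r}$. For $X=(x_1,y_1)^\top$ and $Y=(x_2,y_2)^\top$ in $U_N\times U_N$ I would expand
\[
\<A_N X,Y\>_{\H^r}=\<y_1,x_2\>_{\dot\H^r}+\<\Lambda_N x_1,y_2\>_{\dot\H^{r-1}},
\]
and use the identity $\<\Lambda_N x,y\>_{\dot\H^{r-1}}=-\<x,y\>_{\dot\H^r}$, valid for $x\in U_N$, which follows from $-\Lambda_N e_i=\lambda_i e_i$ together with $\<u,v\>_{\dot\H^s}=\sum_i\lambda_i^s\<u,e_i\>_{L^2}\<v,e_i\>_{L^2}$. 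Applying this to the mixed terms in both $\<A_N X,Y\>_{\H^r}$ and $\<X,A_N Y\>_{\H^r}$ collapses each to $\pm\big(\<y_1,x_2\>_{\dot\H^r}-\<x_1,y_2\>_{\dot\H^r}\big)$, which gives $\<A_N X,Y\>_{\H^r}=-\<X,A_N Y\>_{\H^r}$.

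With $B:=\tfrac t2 A_N$ skew-adjoint, the isometry is then formal. Since $(I+B)^*=I-B$ and the factors $I\pm B$ together with their inverses all commute (being polynomials in $B$), the transform $C=\B^{-1}(t)\AA(t)=(I-B)^{-1}(I+B)$ satisfies
\[
C^*C=(I-B)(I+B)^{-1}(I-B)^{-1}(I+B)=(I-B^2)(I-B^2)^{-1}=I,
\]
and therefore $\|C w\|_{\H^r}=\|w\|_{\H^r}$ for all $w\in U_N\times U_N$, as claimed.

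The only computation requiring care is the weighted skew-symmetry $\<\Lambda_N x,y\>_{\dot\H^{r-1}}=-\<x,y\>_{\dot\H^r}$: the mismatch between the weights $\lambda_i^r$ and $\lambda_i^{r-1}$ in the two components of $\|\cdot\|_{\H^r}$ is exactly absorbed by the eigenvalue $-\lambda_i$ of $\Lambda_N$. As an elementary cross-check avoiding the operator formalism, I could instead diagonalize in $\{e_i\}$: using the explicit block form before \eqref{sch;mild2}, the map splits into $2\times 2$ blocks $\begin{pmatrix}a & b\\ -\lambda_i b & a\end{pmatrix}$ with $a=(1-\lambda_i t^2/4)/(1+\lambda_i t^2/4)$ and $b=t/(1+\lambda_i t^2/4)$, and a direct expansion shows that each block scales the modal quantity $\lambda_i x_i^2+y_i^2$ by $a^2+\lambda_i b^2=1$, the cross terms cancelling; summing $\lambda_i^{r-1}(\lambda_i x_i^2+y_i^2)$ over the modes then recovers $\|\cdot\|_{\H^r}^2$.
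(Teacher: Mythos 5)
Your proof is correct, and there is in fact nothing in the paper to compare it against: Lemma \ref{lm;BINAest} is stated without any proof, so your argument supplies a missing verification rather than an alternative to an existing one. Both of your routes are sound. The Cayley-transform route works exactly as you say: $\AA(t)=I+\frac t2 A_N$ and $\B(t)=I-\frac t2 A_N$ with $A_N=\begin{pmatrix}0&I\\ \Lambda_N&0\end{pmatrix}$ on $U_N\times U_N$, and your weighted identity $\<\Lambda_N x,y\>_{\dot\H^{r-1}}=-\<x,y\>_{\dot\H^r}$ for $x\in U_N$ is precisely what makes $A_N$ skew-adjoint for the $\H^r$ inner product, after which $C^*C=I$ follows since all four factors are rational functions of $A_N$ and commute. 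The modal cross-check is equally valid; the identity $a^2+\lambda_i b^2=1$ you derive is the rational analogue of the trigonometric identity in Lemma \ref{lm;2}, and it is exactly the structural reason the AVF/Cayley propagator is norm-preserving (hence the energy behavior of the scheme).

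One point you handle more carefully than the paper itself: the restriction to $U_N\times U_N$ is necessary, not merely convenient. Taken literally (``for any $w\in\H^r$''), the statement is false, because $\Lambda_N$ annihilates the modes $e_i$ with $i>N$; on $(I-P_N)L^2\times(I-P_N)L^2$ the map $\B^{-1}(t)\AA(t)$ reduces to the shear $\begin{pmatrix}I&tI\\0&I\end{pmatrix}$, and for $w=(0,e_{N+1})^\top$ one gets
\begin{align*}
\|\B^{-1}(t)\AA(t)w\|_{\H^r}^2=t^2\lambda_{N+1}^r+\lambda_{N+1}^{r-1}>\lambda_{N+1}^{r-1}=\|w\|_{\H^r}^2,\quad t>0.
\end{align*}
Since every application of the lemma in the paper (in \eqref{sch;mild2}, in the proof of Proposition \ref{tm;2}, and in the Appendix) is to vectors of the form $P_N(\cdot)$, i.e., elements of $U_N\times U_N$, your restricted formulation is exactly the statement the paper actually needs, and your proof of it is complete.
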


Following  \cite[Theorem 3]{BT79}, we give the following
lemma which is applied to the error estimate for \eqref{sche;sbes}.

\begin{lm}
	\label{err;era}
	For any $r\geq 0$ and $h\geq 0$, there exists a positive constant $C:=C(r)$ such that for any $w\in \H^{r+2},$
	\begin{equation}
	\begin{split}
	\label{est;E_N_2}
	&\|(E_N(h)-\B^{-1}(h)\AA(h))w\|_{\H^r}\leq  Ch^2\|w\|_{\H^{r+2}},\\[1mm]
	&\|(E_N(h)-\B^{-1}(h))w\|_{\H^r}\leq Ch\|w\|_{\H^{r+1}}.
	\end{split}
	\end{equation}
\end{lm}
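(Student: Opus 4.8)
The plan is to diagonalize in the eigenbasis $\{e_i\}$ of $-\Lambda_N$ and reduce both inequalities to elementary scalar estimates. Since $E_N(h)$, $\B^{-1}(h)\AA(h)$ and $\B^{-1}(h)$ are all built from functions of $-\Lambda_N$ (namely $C_N,S_N,\MM^{-1},\Lambda_N$), each leaves every eigenspace $\mathrm{span}\{e_i\}$ invariant and acts there as an explicit $2\times2$ matrix on the pair of Fourier coefficients of the two components of $w$ along $e_i$. Writing $\mu_i=\sqrt{\lambda_i}$ and $\theta_i=h\mu_i$, these mode matrices are
\begin{align*}
E_N(h)\big|_i=\begin{pmatrix}\cos\theta_i & \mu_i^{-1}\sin\theta_i\\ -\mu_i\sin\theta_i & \cos\theta_i\end{pmatrix},\qquad \B^{-1}(h)\AA(h)\big|_i=\frac{1}{1+\theta_i^2/4}\begin{pmatrix}1-\theta_i^2/4 & h\\ -\mu_i^2 h & 1-\theta_i^2/4\end{pmatrix},
\end{align*}
the latter read off from the displayed formula for $\B^{-1}(t)\AA(t)$ using $\MM^{-1}(h)\big|_i=(1+\theta_i^2/4)^{-1}$.

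Next I would pass to coordinates adapted to the $\H^r$ norm by conjugating the $i$-th mode with $\mathrm{diag}(\mu_i^{r},\mu_i^{r-1})$, so that the $\H^r$ norm becomes the plain Euclidean norm computed mode by mode (Parseval). In these coordinates $E_N(h)$ becomes the exact planar rotation $R(\theta_i)$ through angle $\theta_i$, while $\B^{-1}(h)\AA(h)$ becomes the rotation $R(\phi_i)$ through angle $\phi_i=2\arctan(\theta_i/2)$; that the latter is genuinely orthogonal is Lemma \ref{lm;BINAest}, and its angle follows from $\tan(\phi_i/2)=\theta_i/2$. Because the input is measured in the stronger norm $\H^{r+2}$, the weight rescaling contributes an extra factor $\mu_i^{-2}$, so on the $i$-th mode the difference operator acts as $\mu_i^{-2}\big(R(\theta_i)-R(\phi_i)\big)$.

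Now $\|R(\theta_i)-R(\phi_i)\|=2|\sin\tfrac{\theta_i-\phi_i}{2}|\le|\theta_i-\phi_i|$, so the first estimate reduces to $\mu_i^{-2}|\theta_i-\phi_i|\le Ch^2$. With $\mu_i^{-2}=h^2/\theta_i^2$ and $g(\theta):=\theta-2\arctan(\theta/2)\ge0$ this is $h^2\,g(\theta_i)/\theta_i^2\le Ch^2$, which holds because
\begin{align*}
\sup_{\theta>0}\frac{\theta-2\arctan(\theta/2)}{\theta^2}<\infty,
\end{align*}
the ratio tending to $0$ both as $\theta\to0^+$ (where $g(\theta)\sim\theta^3/12$) and as $\theta\to\infty$ (where $g(\theta)\sim\theta$). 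Summing the mode-wise bounds by Parseval gives the first inequality. The second estimate is handled in the same way: in $\H^{r+1}$-adapted coordinates $\B^{-1}(h)$ acts on the $i$-th mode as $\mu_i^{-1}c_i R(\psi_i)$ with $c_i=(1+\theta_i^2/4)^{-1/2}$ and $\psi_i=\arctan(\theta_i/2)$, so the difference operator is $\mu_i^{-1}\big(R(\theta_i)-c_iR(\psi_i)\big)$; using the identity $\|R(\theta_i)-c_iR(\psi_i)\|=(1+c_i^2-2c_i\cos(\theta_i-\psi_i))^{1/2}$ together with the boundedness of $(1+c^2-2c\cos(\theta-\psi))/\theta^2$ on $(0,\infty)$ produces the factor $Ch$.

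The main obstacle is uniformity over all frequencies: $\theta_i=h\sqrt{\lambda_i}$ ranges up to $h\sqrt{\lambda_N}$ and need not be small, so a direct Taylor/consistency expansion of the mode matrices yields constants that blow up with $\lambda_i$. The resolution is that the two extra derivatives of regularity (one for the second estimate) supply exactly the factor $\mu_i^{-2}$ (resp. $\mu_i^{-1}$) needed to tame the high-frequency behaviour; after the rescaling this is encapsulated in the boundedness of the explicit elementary functions displayed above on all of $(0,\infty)$, which is the only substantive point and plays the role of the abstract rational-approximation estimate of \cite{BT79}.
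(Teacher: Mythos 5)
Your proof is correct, and it takes a genuinely different route from the paper's. The paper does not give this unified mode-geometric argument: for the first (order-$h^2$) bound it simply invokes the abstract rational-approximation result for cosine operator families, \cite[Theorem 3]{BT79} with $q=1$, and for the second (order-$h$) bound it expands the block operator $E_N(h)-\B^{-1}(h)$ entrywise in the eigenbasis and bounds the scalar symbols $\bigl|\cos(h\sqrt{\lambda_i})-(1+\lambda_i h^2/4)^{-1}\bigr|$ and $\bigl|\sin(h\sqrt{\lambda_i})-(1+\lambda_i h^2/4)^{-1}\sqrt{\lambda_i}\,h/2\bigr|$ by $C\sqrt{\lambda_i}\,h$, using $|\sin x|\le |x|$, $\cos x \ge 1-x$ and a case distinction between $\sqrt{\lambda_i}\,h<1$ (Taylor expansion) and $\sqrt{\lambda_i}\,h\ge 1$. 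Your argument instead lets the unitary structure do the work: after the diagonal weight conjugation both propagators are exact plane rotations on each mode, with angles $\theta_i=h\sqrt{\lambda_i}$ and $\phi_i=2\arctan(\theta_i/2)$. I checked the key identities: the conjugated midpoint matrix has $\cos\phi_i=(1-\theta_i^2/4)/(1+\theta_i^2/4)$ and $\sin\phi_i=\theta_i/(1+\theta_i^2/4)$, confirming the half-angle formula (and in passing re-proving Lemma \ref{lm;BINAest}); the rotation-difference identities $\|R(\theta)-R(\phi)\|=2|\sin\tfrac{\theta-\phi}{2}|$ and $\|R(\theta)-cR(\psi)\|=(1+c^2-2c\cos(\theta-\psi))^{1/2}$ are correct; and both scalar ratios are continuous on $(0,\infty)$ with finite limits at the endpoints (the first tends to $0$ at both ends, the second to $1/4$ at $0$ and $0$ at $\infty$), so the suprema are finite and the resulting constants are uniform in $N$ and even independent of $r$. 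What your route buys: it is fully self-contained (no appeal to \cite{BT79}), it proves both estimates by a single mechanism, and it isolates exactly why two (resp.\ one) extra derivatives suffice — the per-mode phase error $\theta-2\arctan(\theta/2)$ is $O(\theta^3)$ at low frequency but only $O(\theta)$ at high frequency, and $\min(\theta^3,\theta)\le\theta^2$. What the paper's citation buys is brevity and generality, since the Bales--Thom\'ee theorem covers rational approximations of cosine families beyond this diagonal spectral setting. One cosmetic remark: when you write that in $\H^{r+1}$-adapted coordinates $\B^{-1}(h)$ acts as $\mu_i^{-1}c_iR(\psi_i)$, the factor $\mu_i^{-1}$ really belongs to the change of weight from $\H^{r+1}$-input to $\H^{r}$-output coordinates (the conjugated operator itself is $c_iR(\psi_i)$); since that is precisely how you then use it, nothing is affected.
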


\begin{prop}
	\label{prop;REGSP1}
	Assume that $T>0$, $p\ge 1$, $X_0\in \H^1$ and $\Q^{\frac 12}\in \mathcal L_2(L^2).$
	Then the solution of \eqref{sche;sbes} satisfies 
	\begin{align}
	\label{15}
	\sup_{m\in \Z_{M+1}}\E(V_1^p(u^N_{m}, v^N_{m}))\leq C,
	\end{align}
	where $N\in \N^+$, $Mh=T,$ $M\in \mathbb N^+,$  $C=C(X_0,\Q,T,p)>0$ and $V_1(u^N_{m}, v^N_{m}):=\frac 12\|u^N_m\|_{\dot{\mathbb{H}}^1}^2+\frac 12 \|v^N_m\|_{L^2}^2+F(u^N_m)+C_1$ with $C_1>b_1.$
\end{prop}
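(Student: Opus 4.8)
The plan is to bound the $p$-th moment of the discrete Lyapunov functional $V_1$ along the numerical trajectory by establishing a one-step discrete energy law for the splitting scheme, and then summing/iterating it with a discrete Gronwall argument. The key structural fact, which mirrors the continuous energy evolution law of Lemma \ref{lm;eel}, is that the deterministic AVF half-step \eqref{mol;dswe1} exactly conserves $V_1$, so the only change in $V_1$ per step comes from the stochastic kick $v^N_{m+1}=\bar v^N_{m+1}+P_N\delta W_m$. The AVF method is designed precisely so that the discrete gradient property gives
\begin{align*}
\tfrac12\|\bar v^N_{m+1}\|_{L^2}^2+\tfrac12\|u^N_{m+1}\|_{\dot{\H}^1}^2+F(u^N_{m+1})
=\tfrac12\|v^N_{m}\|_{L^2}^2+\tfrac12\|u^N_{m}\|_{\dot{\H}^1}^2+F(u^N_{m}),
\end{align*}
i.e.\ $V_1(u^N_{m+1},\bar v^N_{m+1})=V_1(u^N_m,v^N_m)$. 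This is the heart of the matter and I would verify it first by taking the $\dot{\H}^1$-inner product of the $u$-update with the $\dot{\H}^1$ gradient and the $L^2$-inner product of the $\bar v$-update with $\bar v^N_{m+\frac12}$, using that the midpoint slopes $\bar v^N_{m+\frac12}$ and $u^N_{m+\frac12}$ telescope the quadratic terms exactly, while the $\int_0^1 f(\cdots)d\theta$ term is exactly the discrete gradient of $F$ and telescopes $F(u^N_{m+1})-F(u^N_m)$.

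Next I would insert the stochastic step. Writing $V_1(u^N_{m+1},v^N_{m+1})=V_1(u^N_{m+1},\bar v^N_{m+1}+P_N\delta W_m)$ and expanding the square in the $\|v\|_{L^2}^2$ term gives
\begin{align*}
V_1(u^N_{m+1},v^N_{m+1})=V_1(u^N_m,v^N_m)+\<\bar v^N_{m+1},P_N\delta W_m\>_{L^2}+\tfrac12\|P_N\delta W_m\|_{L^2}^2.
\end{align*}
The cross term is a martingale-type increment: since $\bar v^N_{m+1}$ is $\FFF_{t_m}$-measurable (it is computed from $u^N_m,v^N_m$ via the implicit deterministic step, independent of $\delta W_m$) and $\delta W_m$ is independent of $\FFF_{t_m}$ with mean zero, the conditional expectation of $\<\bar v^N_{m+1},P_N\delta W_m\>_{L^2}$ given $\FFF_{t_m}$ vanishes, and $\E\|P_N\delta W_m\|_{L^2}^2=h\,\tr(P_N\Q^{\frac12}(P_N\Q^{\frac12})^*)\le h\,\tr(\Q)$. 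In expectation this yields a discrete energy identity with linear-in-$t$ growth, exactly parallel to the Proposition preceding Lemma \ref{explemma;solu}. To reach the $p$-th moment I would then raise to the $p$-th power and take expectations, controlling $\E[(V_1(u^N_{m+1},v^N_{m+1}))^p]$ in terms of $\E[(V_1(u^N_m,v^N_m))^p]$ plus lower-order remainder terms.

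The main technical step is handling these remainder terms in the $p$-th power expansion. Applying the binomial/Young inequality to $(V_1(u^N_m,v^N_m)+R_m)^p$, where $R_m=\<\bar v^N_{m+1},P_N\delta W_m\>_{L^2}+\tfrac12\|P_N\delta W_m\|_{L^2}^2$, produces mixed terms of the form $(V_1)^{p-k}R_m^k$. The higher moments of the Gaussian increment $P_N\delta W_m$ are controlled by powers of $h\,\tr(\Q)$ via the Burkholder--Davis--Gundy inequality or direct Gaussian moment bounds, and the martingale cross term $\<\bar v^N_{m+1},P_N\delta W_m\>_{L^2}$ contributes only through its even moments (the odd conditional expectations vanish). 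I would estimate $\E[(V_1(u^N_m,v^N_m))^{p-1}\<\bar v^N_{m+1},P_N\delta W_m\>_{L^2}]$ carefully: replacing the non-adapted factor $(V_1(u^N_{m+1},\cdots))^{p-1}$ is to be avoided, so I keep everything adapted to $\FFF_{t_m}$ and note that $\bar v^N_{m+1}$ is bounded by $V_1(u^N_m,v^N_m)^{1/2}$-type quantities through the energy identity, giving a bound of order $h$ times $\E[(V_1(u^N_m,v^N_m))^p]$ after Young's inequality.

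Collecting these estimates gives a recursion of the form $\E[(V_1(u^N_{m+1},v^N_{m+1}))^p]\le (1+Ch)\E[(V_1(u^N_m,v^N_m))^p]+Ch$, where $C=C(\Q,p,T)$ is independent of $N$ and $h\le h_0$. The discrete Gronwall lemma then yields $\E[(V_1(u^N_m,v^N_m))^p]\le e^{CT}(\E[(V_1(u^N_0,v^N_0))^p]+CT)$ uniformly in $m\in\Z_{M+1}$, and since $X_0\in\H^1$ together with the growth bound \eqref{equ} gives $V_1(u^N_0,v^N_0)\le C(X_0)$, the right-hand side is bounded by a constant $C=C(X_0,\Q,T,p)$ independent of $N,M,h$, establishing \eqref{15}. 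The delicate point to get right is the uniformity in $N$ and $h$, which hinges on the fact that the AVF energy conservation is exact (no $N$- or $h$-dependent defect enters the deterministic part) and that $\tr(P_N\Q^{\frac12}(P_N\Q^{\frac12})^*)\le\tr(\Q)$ uniformly; this is precisely why the energy-preserving structure of the scheme is essential rather than incidental to the moment bound.
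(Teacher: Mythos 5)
Your proposal is correct, and it rests on exactly the same structural pillar as the paper's proof: the exact energy conservation of the deterministic AVF step, $V_1(u^N_{m+1},\bar v^N_{m+1})=V_1(u^N_m,v^N_m)$ (the paper's identity \eqref{sdcn}), combined with the observation that the noise kick only perturbs $V_1$ through a martingale increment plus a trace term, leading to the recursion $\E[V_1^p(u^N_{m+1},v^N_{m+1})]\le(1+Ch)\E[V_1^p(u^N_m,v^N_m)]+Ch$ and an iteration over the $M$ steps. Where you differ is the technical treatment of the stochastic kick: the paper embeds it into the continuous-time stochastic subsystem \eqref{mol;dswe2}, applies It\^o's formula to $V_1^p(u^{N,S}(t),v^{N,S}(t))$ on $[t_m,t_{m+1}]$, and closes with a Gronwall argument on each subinterval; you instead use the purely discrete identity $V_1(u^N_{m+1},v^N_{m+1})=V_1(u^N_m,v^N_m)+\<\bar v^N_{m+1},P_N\delta W_m\>_{L^2}+\frac 12\|P_N\delta W_m\|_{L^2}^2$ together with a binomial/Young expansion of the $p$-th power and conditional Gaussian moment bounds. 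Your route is more elementary (no It\^o calculus at all for the noise step), and the $\FFF_{t_m}$-measurability of $u^N_{m+1},\bar v^N_{m+1}$ that you invoke is indeed valid since they solve the deterministic implicit system with $\FFF_{t_m}$-measurable data; in fact you are overcautious about the mixed term, since $\E\big[(V_1(u^N_m,v^N_m))^{p-1}\<\bar v^N_{m+1},P_N\delta W_m\>_{L^2}\big]$ vanishes exactly by conditioning on $\FFF_{t_m}$, no Young inequality needed. The price of your approach is that the binomial expansion is only clean for integer $p$, so for general real $p\ge 1$ you need an extra step (e.g.\ bound for integer exponents and interpolate via Jensen's inequality, or use a $C^2$-type inequality for $x\mapsto x^p$), whereas the paper's It\^o-based argument handles all real $p\ge 2$ directly and covers $p\in[1,2)$ by Jensen; for the higher-order terms $k\ge 2$ you should also record that $\|\bar v^N_{m+1}\|_{L^2}^2\le 2V_1(u^N_m,v^N_m)$ (which uses $F+C_1\ge 0$ from \eqref{equ}) so that $\E[V_1^{p-k}|R_m|^k]\le Ch(1+\E V_1^p)$, exactly as you sketch.
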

\begin{proof}
	Fix $t\in T_m:=[t_{m},t_{m+1}]$ with $m\in\Z_{M}.$   Since on the interval $T_m,$ $u^{N,S}(t)=u^N_{m+1}, v^{N,S}(t_m)=\bar v^N_{m+1}$, we have 
	\begin{align*}
	V_1(u^N_{m+1},v^N_{m+1})=&
	V_1(u^N_{m+1},\bar v^{N}_{m+1})
	+
	\int_{t_m}^{t_{m+1}}\<v^{N,S} (s), P_NdW(s)\>_{L^2}\\
	&+\int_{t_m}^{t_{m+1}}\frac 12{\rm{Tr}}\left(( P_N\Q^\frac 12)( P_N\Q^\frac 12)^\ast\right)ds.
	\end{align*}
	Due to the fact that $d u^{N,S}(t)=0$ and $dv^{N,S}(t)=P_NdW(t),$
	we can apply It\^o's formula to $V_1^p(u^{N,S} (t),v^{N,S} (t))$ and obtain that for $p\ge 2$
	\begin{align*}
	V_1^p&(u^{N,S} (t),v^{N,S} (t))
	=V_1^p(u^{N,S} (t_m),v^{N,S} (t_m))\\
	&+\frac p2\int_{t_m}^t
	V_1^{p-1}(u^{N,S} (s),v^{N,S} (s)){\rm{Tr}}\left(( P_N\Q^\frac 12)( P_N\Q^\frac 12)^\ast\right)ds\\
	&+p\int_{t_m}^t
	V_1^{p-1}(u^{N,S} (s),v^{N,S} (s))\< v^{N,S}(s) , P_NdW(s)\>_{L^2}\\
	&+\frac {p(p-1)}2\sum\limits_{i=1}^N\int_{t_m}^t
	V_1^{p-2}(u^{N,S} (s),v^{N,S} (s))\<v^{N,S}(s), \Q^{\frac 12}e_i\>_{L^2}^2ds.
	\end{align*}
	Taking the expectation on both sides of the above equation, using the martingality of the stochastic integral and applying the H\"older and Young inequalities,
	\begin{align*}
	\E(V_1^p(u^{N,S} (t),v^{N,S} (t)))
	\leq&\E(V_1^p(u^N_{m+1}, \bar v^N_{m+1}))\\
	&+C\int_{t_m}^t(1+\E(V_1^p(u^{N,S} (s),v^{N,S} (s))))ds,
	\end{align*}
	which, together with the Gronwall inequality in \cite[Corollary 3]{Drag03} and the property 
	\begin{align}\label{sdcn}
	V_1(u^N_{m+1},\bar v^N_{m+1})=V_1(u^N_m,v^N_m),
	\end{align} leads to
	$\E(V_1^p(u^N_{m+1}, v^N_{m+1}))\leq \exp(Ch)\left(\E(V_1^p(u^{N}_m,v^{N}_m))+Ch\right).$ 
	Since $Mh=T$, iteration arguments lead to
	\begin{align*}
	\sup_{m\in \Z_{M}}\E(V_1^p(u^N_{m+1}, v^N_{m+1}))\leq \exp(CT)\E(V_1^p(u^{N}_0,v^{N}_0))+\exp(CT)CT,
	\end{align*}
	which implies the estimate \eqref{15}.
\end{proof}

From the above proof of Proposition \ref{prop;REGSP1}, we get the following theorem which shows that the proposed method preserves the evolution law of the energy $V_1$ in \eqref{Lya-V1}.

\begin{theorem}
	Assume that $T>0$, $X_0\in \H^1$ and $\Q^{\frac 12}\in \mathcal L_2(L^2).$
	Then the solution of \eqref{sche;sbes} satisfies 
	\begin{align*}
	\E(V_1(u^N_m,v^N_m))
	=&V_1(u^N_0, v^N_0)
	+\frac 12{\rm{Tr}}\left(( P_N\Q^\frac 12)(P_N\Q^\frac 12)^\ast\right)t_m,
	\end{align*}
	where $N\in \N^+, m\in \mathbb Z_{M+1}$, $Mh=T,$ $M\in \mathbb N^+,$ and $V_1(u^N_{m}, v^N_{m}):=\frac 12\|u^N_m\|_{\dot{\mathbb{H}}^1}^2+\frac 12 \|v^N_m\|_{L^2}^2+F(u^N_m)+C_1$ with $C_1>b_1.$
\end{theorem}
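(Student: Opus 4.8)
The plan is to exploit the splitting structure of the scheme \eqref{sche;sbes}. On each subinterval $[t_m,t_{m+1}]$ the method first maps $(u^N_m,v^N_m)$ to $(u^N_{m+1},\bar v^N_{m+1})$ through the deterministic AVF discretization of the Hamiltonian system \eqref{mol;dswe1}, and then maps $(u^N_{m+1},\bar v^N_{m+1})$ to $(u^N_{m+1},v^N_{m+1})$ by adding the noise increment $P_N\delta W_m$ to the velocity. I would show that the first (deterministic) step conserves $V_1$ exactly, whereas the second (stochastic) step contributes, on average, the trace drift $\tfrac12\tr((P_N\Q^{\frac12})(P_N\Q^{\frac12})^\ast)h$ per step. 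Summing the resulting one-step relation over $m$ would then yield the claimed evolution law.

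The principal obstacle is the deterministic energy identity \eqref{sdcn}, i.e. $V_1(u^N_{m+1},\bar v^N_{m+1})=V_1(u^N_m,v^N_m)$, which encodes the energy preservation of the AVF map. To establish it I would expand the increment of each term of $V_1$ about the discrete midpoints. For the quadratic parts, using $u^N_{m+1}-u^N_m=h\bar v^N_{m+\frac12}$ together with $\<x,y\>_{\dot\H^1}=-\<\Lambda_N x,y\>_{L^2}$ for $x,y\in U_N$, one gets
\begin{align*}
\tfrac12\big(\|u^N_{m+1}\|_{\dot\H^1}^2-\|u^N_m\|_{\dot\H^1}^2\big)=-h\<\Lambda_N u^N_{m+\frac12},\bar v^N_{m+\frac12}\>_{L^2},\qquad
\tfrac12\big(\|\bar v^N_{m+1}\|_{L^2}^2-\|v^N_m\|_{L^2}^2\big)=\<\bar v^N_{m+1}-v^N_m,\bar v^N_{m+\frac12}\>_{L^2}.
\end{align*}
Substituting the velocity update from \eqref{sche;sbes} into the second identity, the linear contributions $\pm h\<\Lambda_N u^N_{m+\frac12},\bar v^N_{m+\frac12}\>_{L^2}$ cancel. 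For the potential, the defining discrete-gradient property of the AVF method gives
\begin{align*}
F(u^N_{m+1})-F(u^N_m)=\Big\langle\int_0^1 f\big(u^N_m+\theta(u^N_{m+1}-u^N_m)\big)\,d\theta,\;u^N_{m+1}-u^N_m\Big\rangle_{L^2},
\end{align*}
and inserting $u^N_{m+1}-u^N_m=h\bar v^N_{m+\frac12}$ and using the self-adjointness of $P_N$ with $\bar v^N_{m+\frac12}\in U_N$ shows that this exactly cancels the nonlinear term produced by the velocity update. Hence all contributions cancel and \eqref{sdcn} follows.

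For the stochastic step, since $du^{N,S}(t)=0$ and $dv^{N,S}(t)=P_N\,dW(t)$ on $[t_m,t_{m+1}]$, I would apply It\^o's formula to $V_1(u^{N,S}(t),v^{N,S}(t))$ (this is the $p=1$ computation already recorded at the start of the proof of Proposition \ref{prop;REGSP1}) to obtain
\begin{align*}
V_1(u^N_{m+1},v^N_{m+1})=V_1(u^N_{m+1},\bar v^N_{m+1})+\int_{t_m}^{t_{m+1}}\<v^{N,S}(s),P_N\,dW(s)\>_{L^2}+\tfrac12\tr\big((P_N\Q^{\frac12})(P_N\Q^{\frac12})^\ast\big)h.
\end{align*}
The a priori moment bound \eqref{15} ensures $v^{N,S}$ is square-integrable, so the stochastic integral is a genuine martingale of zero mean. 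Taking expectations and combining with \eqref{sdcn} gives the one-step identity
\begin{align*}
\E\big(V_1(u^N_{m+1},v^N_{m+1})\big)=\E\big(V_1(u^N_m,v^N_m)\big)+\tfrac12\tr\big((P_N\Q^{\frac12})(P_N\Q^{\frac12})^\ast\big)h.
\end{align*}
Finally, summing this relation from $0$ to $m-1$, noting that $V_1(u^N_0,v^N_0)$ is deterministic and $t_m=mh$, produces the asserted evolution law. The hard part is purely the algebraic AVF cancellation in \eqref{sdcn}; the only analytic point to verify is the vanishing mean of the stochastic integral, which is guaranteed by \eqref{15}.
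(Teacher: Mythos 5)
Your proposal is correct and follows essentially the same route as the paper: the paper deduces this theorem directly from the proof of Proposition \ref{prop;REGSP1}, which uses exactly your decomposition --- It\^o's formula on the stochastic substep to produce the martingale plus the trace term, combined with the AVF energy identity \eqref{sdcn}, followed by taking expectation and summing over the steps. The only difference is that you write out the explicit midpoint/discrete-gradient algebra establishing \eqref{sdcn}, which the paper asserts without detailed proof.
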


Beside the energy-preserving property, the proposed numerical method also inherits the exponential integrability property of the original system as following.

\begin{prop}
	\label{lm;expoSp}
	Let $d=1,2,$ $X_0\in\H^1$, $T>0$ and $\|\Q^\frac 12\|_{\mathcal L_2(L^2)}<\infty.$ Then the solution of \eqref{sche;sbes} satisfies 
	\begin{align}\label{16}
	\E
	\left[\exp\left(ch\sum\limits_{i=0}^m\|u^N_i\|_{L^6}^2
	\right)
	\right]
	\leq C
	\end{align}
	for any $c>0$, where 
	$C:= C(X_0,\Q,T, c)>0$, $N\in \N^+, m\in \mathbb Z_{M+1}$, $M\in \mathbb N^+, Mh=T.$
\end{prop}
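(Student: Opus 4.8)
The plan is to mirror, at the fully discrete level, the two-step strategy of Lemma~\ref{explemma;solu} and Corollary~\ref{cor;EIsolu}, exploiting the splitting structure \eqref{mol;dswe2} together with the exact energy conservation \eqref{sdcn} of the deterministic AVF substep. The heart of the argument is a one-step exponential estimate for $V_1$ yielding the discrete analogue of \eqref{exp-mom}, namely
\begin{align*}
\sup_{m\in\Z_{M+1}}\E\left[\exp\left(\frac{V_1(u^N_m,v^N_m)}{\exp(\alpha t_m)}\right)\right]\le C
\end{align*}
for some $\alpha\ge\tr(\Q)$. The key observation is that on each subinterval $[t_m,t_{m+1}]$ the value of $V_1$ is changed only by the purely stochastic substep \eqref{mol;dswe2}: the AVF step contributes nothing because $V_1(u^N_{m+1},\bar v^N_{m+1})=V_1(u^N_m,v^N_m)$. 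Along $(u^{N,S},v^{N,S})$, which solves $du^{N,S}=0$, $dv^{N,S}=P_N\,dW$, the drift part of It\^o's formula for $V_1$ equals $\tfrac12\tr(P_N\Q^{\frac12}(P_N\Q^{\frac12})^\ast)\le\tfrac12\tr(\Q)$, while its quadratic variation term satisfies $\tfrac{1}{2}\sum_i\<v^{N,S},P_N\Q^{\frac12}e_i\>_{L^2}^2\le\tr(\Q)V_1$ (since $\|v^{N,S}\|_{L^2}^2\le 2V_1$). These are exactly the two ingredients verified inside the proof of Lemma~\ref{explemma;solu}, so the very same computation with $\bar U=-\tfrac12\tr(\Q)$ and $\alpha\ge\tr(\Q)$ lets me apply the exponential integrability lemma \cite[Corollary~2.4]{CHJ13} on $[t_m,t_{m+1}]$ conditionally on $\FFF_{t_m}$; inserting \eqref{sdcn} then gives the one-step inequality
\begin{align*}
\E\left[\exp\left(\frac{V_1(u^N_{m+1},v^N_{m+1})}{\exp(\alpha t_{m+1})}\right)\Big|\FFF_{t_m}\right]\le\exp\left(\int_{t_m}^{t_{m+1}}\frac{\tr(\Q)}{2\exp(\alpha s)}\,ds\right)\exp\left(\frac{V_1(u^N_m,v^N_m)}{\exp(\alpha t_m)}\right).
\end{align*}
Taking the full expectation and iterating over $m$ (the deterministic prefactors telescope into $\exp(\int_0^T\tfrac{\tr(\Q)}{2}\exp(-\alpha s)\,ds)$, and $V_1(u^N_0,v^N_0)$ is deterministic) gives the displayed uniform bound.

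For the passage from $V_1$ to the $L^6$ Riemann sum I would reproduce the reduction of Corollary~\ref{cor;EIsolu} at the discrete nodes. Since $t_m\le T$, discrete Jensen's inequality applied to the convex combination $h\sum_{i=0}^m\|u^N_i\|_{L^6}^2=\tfrac{1}{m+1}\sum_{i=0}^m(m+1)h\|u^N_i\|_{L^6}^2$ gives
\begin{align*}
\E\left[\exp\left(ch\sum_{i=0}^m\|u^N_i\|_{L^6}^2\right)\right]\le\sup_{0\le i\le m}\E\left[\exp\left(c(m+1)h\,\|u^N_i\|_{L^6}^2\right)\right],
\end{align*}
and $(m+1)h\le T+h\le 2T$, so the exponent constant is at most $2cT$. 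For each fixed node I would then use the Gagliardo--Nirenberg inequality $\|u\|_{L^6}\le C\|\nabla u\|_{L^2}^{d/3}\|u\|_{L^2}^{1-d/3}$ followed by the Young and H\"older inequalities to dominate $\E[\exp(2cT\|u^N_i\|_{L^6}^2)]$ by $C(\epsilon,d)\,\E[\exp(\exp(-\alpha t_i)(\tfrac12\|\nabla u^N_i\|_{L^2}^2+\epsilon e^{\alpha t_i}\|u^N_i\|_{L^4}^4))]\le C\,\E[\exp(\exp(-\alpha t_i)V_1(u^N_i,v^N_i))]$, exactly as in Corollary~\ref{cor;EIsolu}, with $\epsilon$ chosen small enough (e.g.\ $e^{\alpha T}\epsilon\le a_1$) so the $L^4$ term is absorbed into $F$ via \eqref{equ}. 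The uniform bound from the previous paragraph then finishes \eqref{16}.

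I expect the only delicate point to be the one-step estimate: one must apply the continuous-time lemma \cite[Corollary~2.4]{CHJ13} on the shifted interval $[t_m,t_{m+1}]$ with the \emph{non-normalized} weight $\exp(\alpha t)$ and then splice the pieces through \eqref{sdcn}, carefully checking that the generator and quadratic-variation bounds for the frozen-$u$ noise substep coincide with those already established for the semidiscretization. Everything else is a routine transcription of Corollary~\ref{cor;EIsolu} and of the iteration in Proposition~\ref{prop;REGSP1}. Should the abstract lemma prove awkward to invoke on a subinterval, one could instead compute the conditional Gaussian integral $\E[\exp(\exp(-\alpha t_{m+1})(\<\bar v^N_{m+1},P_N\delta W_m\>_{L^2}+\tfrac12\|P_N\delta W_m\|_{L^2}^2))\,|\,\FFF_{t_m}]$ directly, using $\|\bar v^N_{m+1}\|_{L^2}^2\le 2V_1(u^N_m,v^N_m)$; the assumption $h\le h_0$ guarantees finiteness of the Gaussian integral and the same threshold $\alpha\gtrsim\tr(\Q)$ emerges from requiring the exponent to telescope.
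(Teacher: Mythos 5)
Your proposal is correct and follows essentially the same route as the paper's own proof: the one-step conditional application of the exponential integrability lemma of \cite[Corollary 2.4]{CHJ13} to the frozen-$u$ stochastic substep, spliced through the AVF energy conservation \eqref{sdcn} and iterated to get the discrete analogue of \eqref{exp-mom}, followed by the Jensen/Gagliardo--Nirenberg/Young reduction of Corollary \ref{cor;EIsolu} at the grid points. Your handling of the constant $(m+1)h\le T+h$ is in fact slightly more careful than the paper's, but the argument is otherwise the same.
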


\begin{proof}
	Notice that $$V_1(u^N_{m+1},v^N_{m+1})=V_1(u^{N,S}(t_{m+1}),v^{N,S}(t_{m+1})),$$
	where $v^{N,S}(t_{m+1})$ is the solution of \eqref{mol;dswe2} defined on $[t_m,t_{m+1}]$ with $v^{N,S}(t_{m})=\bar v^N_{m+1}$  and $u^{N,S}(t_{m})=u^N_{m+1}.$ 
	Let $\widetilde \F_N:=(0,0)^\top,$ and $\widetilde \G_N:=(0,P_N)^\top.$ 
	Then for $\alpha>0,$ 
	\begin{align*}
	&\left(\GG_{\widetilde \F_N, \widetilde \G_N}(V_1)\right)(u^{N,S},v^{N,S})+\frac{1}{2\exp(\alpha t)}  \sum\limits_{i=1}^\infty\<(P_N\Q^\frac 12)^*v^{N,S},e_i\>^2_{L^2}\\
	\leq& \frac 12\tr(\Q)+\frac{1}{\exp(\alpha t)}V_1(u^{N,S},v^{N,S})\tr(\Q).
	\end{align*}
	Let $\bar U=-\frac {1}2\tr(\Q),$ $\alpha\geq \tr(\Q).$ 
	Applying It\^o's formula %to $\exp(\frac{ V_1(u^{N,S}(t),v^{N,s}(t))}{\exp (\alpha t)})\exp(\int_{t_m}^t\frac{\bar U(s)}{\exp (\alpha s)}ds)$ 
	and taking conditional expectation, we have
	\begin{align*}
	&\quad\E\left[\exp\left(\frac{ V_1(u^{N,S}(t),v^{N,S}(t))}{\exp (\alpha t)}+\int_{t_m}^t\frac{\bar U(s)}{\exp (\alpha s)}ds \right) \right]\\
	&=\E\left[\E\left[\exp\left(\frac{ V_1(u^{N,S}(t),v^{N,S}(t))}{\exp (\alpha t)}+\int_{t_m}^t\frac{\bar U(s)}{\exp (\alpha s)}ds \right) \Big| \mathcal F_{t_m}\right]\right]\\
	%&\leq\E \Big[\exp (\frac {V_1(u^{N,S}(t_m),v^{N,S}(t_m))}{\exp (\alpha t_m)})\Big]\\
	&\leq \E \Big[\exp \Big(\frac {V_1(u_{m+1}^N,\bar v_{m+1}^N)}{\exp (\alpha t_m)}\Big)\Big]=\E \Big[\exp \Big(\frac {V_1(u^N_{m}, v^N_{m})}{\exp (\alpha t_m)}\Big)\Big],
	\end{align*}
	where we use the fact that on $[t_m,t_{m+1}]$, $v^{N,S}(t_{m})=\bar v^N_{m+1}$  and $u^{N,S}(t_{m})=u^N_{m+1}$ and that the energy preservation of the AVF method, $V_1(u^N_{m+1},\bar v^N_{m+1})=V_1(u^N_{m}, v^N_{m})$.
	
	Repeating the above arguments on every subinterval $[t_{l},t_{l+1}]$, $l\le m-1$, we obtain 
	\begin{align}\label{med-ex}
	\E\left[\exp\left(\frac{V_1(u_{m+1}^{N},v_{m+1}^{N})}{\exp (\alpha t_{m+1})}\right)\right]\le\exp\left( V_1(u_0^N,v^{N}_0)\right) \exp\left(-\int_{0}^{t_{m+1}}\frac{\bar U(s)}{\exp (\alpha s)}ds \right). 
	\end{align}
	Now, we are in a position to show \eqref{16}.
	By using Jensen's inequality, the Gagliardo--Nirenberg inequality $\|u\|_{L^6}\le C\|\nabla u\|_{L^2}^{a}\|u\|^{1-a}_{L^2}$ with  $a=\frac d3$, and the Young inequality,
	we have that
	\begin{align*}
	&\E\left[\exp\left(ch\sum\limits_{i=0}^m\|u^N_i\|_{L^6}^2\right)\right]\leq \sup_{i \in \Z_{M+1}}\E\left[\exp(cT\|u^N_i\|_{L^6}^2)\right]\\
	\leq&  \sup_{i \in \Z_{M+1}}\E\left[\exp\left(\frac{\|\nabla u^N_i\|_{L^2}^2}{2\exp(\alpha t_i)}\right)\exp\Big(\exp(\frac a{1-a}\alpha T)\|u^N_i\|_{L^2}^2 (cCT)^{\frac 1{1-a}}2^{\frac a{1-a}}\Big)\right].
	\end{align*}
	Then the H\"older and the Young inequalities imply that for small enough $\epsilon>0$,
	\begin{align*}
	\E\left[\exp\left(ch\sum\limits_{i=0}^m\|u^N_i\|_{L^6}^2\right)\right]
	&\leq  C(\epsilon,d)\sup_{i \in \Z_{M+1}}\E\left[\exp\left(\frac{\|\nabla u^N_i\|_{L^2}^2}{2\exp(\alpha t_i)}\right)\exp(\epsilon \|u^N_i\|_{L^4}^4)\right]\\\nonumber 
	&\le C(\epsilon,d) \sup_{i \in \Z_{M+1}}\E\left[\exp\left(\frac{V_1(u^N_i,v^N_i)}{\exp(\alpha t_i)}\right)\right].
	\end{align*}
	%where we choose  $\epsilon=\frac {a_1}{\exp{(\alpha T)}}.$ 
	By applying \eqref{med-ex}, we complete the proof.
\end{proof}

Based on the above exponential integrability property of  $\{u^N_i\}_{1\leq i \leq M}$ and Lemma \ref{lm;Holderu^N}, we obtain the strong convergence rate in temporal direction as following.

\begin{rk}
	Let $d=2$.
	Assume that $X_0\in \H^2,$ $T>0$ and $\|(-\Lambda)^{\frac {1}2}\Q^\frac 12\|<\infty.$ 
	By introducing the Lyapunov functional 
	\begin{align*}
	V_2(u^N_m,v^N_m)=\frac 12\left\|\Lambda u^N_m\right\|^2_{L^2}
	+\frac 12\left\|\nabla v^N_m\right\|^2_{L^2}
	+\frac 12\<(-\Lambda)u^N_m,f(u^N_m)\>_{L^2},
	\end{align*}
	similar arguments in the proof of \cite[Lemma 3.3]{CHLZ17b} yield that for any $p\geq1$, there exists a constant $C=C(X_0,\mathbf Q,T,p)>0$ such that
	$
	\mathbb E\left[\sup\limits_{m\in\mathbb Z_{M+1}}\|u^N_m\|_{\dot{\mathbb H}^2}^p\right]\leq C.
	$
	We leave the details to readers. 
\end{rk}

\begin{prop}
	\label{tm;2}
	Let $d=1$, $\beta\ge 1$ {\rm (}or $d=2$, $\beta=2${\rm )}, $\gamma=\min{(\beta,2)}$ and $T>0.$ 
	Assume that $X_0\in \H^\beta,$ $\|(-\Lambda)^{\frac {\beta-1}2}\Q^\frac 12\|_{\mathcal L_2(L^2)}<\infty$. There exists $h_0>0$ such that for $h\le h_0$,  $p\geq1$, 
	\begin{align}
	\sup\limits_{m\in \Z_{M+1}}\E\left[\|X^N(t_m)-X^N_m\|^{2p}\right]\leq Ch^{\gamma p},
	\end{align}
	where $C:=C(p,X_0,\Q,T)>0$, $N\in \N^+$, $M\in \mathbb N^+, Mh=T.$
\end{prop}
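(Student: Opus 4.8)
The plan is to compare the exact mild form \eqref{sol;sga} of the semidiscretization $X^N(t_m)$ with the discrete variation–of–constants representation obtained by iterating the scheme \eqref{sch;mild2}, and then to close the resulting error recursion by a discrete Gronwall argument whose random, superlinearly growing coefficients are tamed by the exponential integrability estimates of Corollary \ref{cor;EIsolu} and Proposition \ref{lm;expoSp}. Writing $\Phi_l:=\big(0,\,-hP_N\int_0^1 f(u^N_l+\theta(u^N_{l+1}-u^N_l))\,d\theta\big)^\top$, iteration of \eqref{sch;mild2} gives
\begin{align*}
X^N_m=(\B^{-1}(h)\AA(h))^mX^N_0+\sum_{l=0}^{m-1}(\B^{-1}(h)\AA(h))^{m-1-l}\B^{-1}(h)\Phi_l+\sum_{l=0}^{m-1}(\B^{-1}(h)\AA(h))^{m-1-l}\G_N\delta W_l.
\end{align*}
Subtracting this from \eqref{sol;sga} at $t=t_m$ and using $X^N(0)=X^N_0$, the error $e_m:=X^N(t_m)-X^N_m$ decomposes into an initial (linear) part $\big(E_N(t_m)-(\B^{-1}(h)\AA(h))^m\big)X^N_0$, a drift part, and a stochastic part, which I would estimate separately in $L^{2p}(\OOO;\H)$.

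For the linear part I would interpolate two global bounds for the propagator difference. The trivial one, $\|(E_N(t_m)-(\B^{-1}(h)\AA(h))^m)w\|_{\H}\le 2\|w\|_{\H}$, follows from the isometry in Lemma \ref{lm;BINAest} and $\|E_N(t_m)\|_{\LL(\H)}=1$; the second–order one, $\|(E_N(t_m)-(\B^{-1}(h)\AA(h))^m)w\|_{\H}\le Ch\|w\|_{\H^2}$, follows by telescoping the one–step estimate of Lemma \ref{err;era} against the isometries and using $mh\le T$. Interpolating yields the fractional bound $Ch^{\gamma/2}\|w\|_{\H^\gamma}$ with $\gamma=\min(\beta,2)\in[0,2]$, and since $X^N_0\in\H^\beta\hookrightarrow\H^\gamma$ the linear part is $O(h^{\gamma/2})$. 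The same fractional propagator estimate drives the stochastic part: the stochastic error is the martingale $\sum_l\int_{t_l}^{t_{l+1}}\big(E_N(t_m-s)-(\B^{-1}(h)\AA(h))^{m-1-l}\big)\G_N\,dW(s)$, whose $L^{2p}(\OOO;\H)$ norm the Burkholder--Davis--Gundy inequality reduces to the Hilbert--Schmidt norms of $\big(E_N(t_m-s)-(\B^{-1}(h)\AA(h))^{m-1-l}\big)\G_N\Q^{1/2}$; splitting $E_N(t_m-s)=E_N(t_{l+1}-s)E_N(t_m-t_{l+1})$ and combining the fractional bound with the Hölder continuity of $E_N$ from Lemma \ref{lm;prpE_n} makes these $O(h^{\gamma/2})$ uniformly, using $\|(-\Lambda)^{(\beta-1)/2}\Q^{1/2}\|_{\LL_2(L^2)}<\infty$ and $\gamma\le\beta$.

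For the drift part I would split $\int_0^{t_m}=\sum_l\int_{t_l}^{t_{l+1}}$ and replace $\Phi_l$ by $\widetilde\Phi_l$, the same average vector field quantity built from the exact values $u^N(t_l),u^N(t_{l+1})$. The consistency error (the difference between the integral and the $\widetilde\Phi_l$ term) is $O(h^{1+\gamma/2})$ per step, by combining the fractional propagator estimates, the second–order quadrature error of the averaged vector field rule, and the temporal Hölder regularity of $u^N$ from Lemma \ref{lm;Holderu^N} together with the moment bounds of Lemma \ref{h1} and the regularity propositions; summing over $m\le T/h$ steps gives $O(h^{\gamma/2})$. The remaining stability term $\B^{-1}(h)(\widetilde\Phi_l-\Phi_l)$ is treated with the local Lipschitz bound $|f(a)-f(b)|\le C(1+|a|^2+|b|^2)|a-b|$ and the embedding $L^{6/5}\hookrightarrow\dot\H^{-1}$, yielding
\begin{align*}
\|\B^{-1}(h)(\widetilde\Phi_l-\Phi_l)\|_{\H}\le Ch\,\rho_l\,(\|e_l\|_{\H}+\|e_{l+1}\|_{\H}),\qquad \rho_l:=1+\smallsum_{j\in\{l,l+1\}}\big(\|u^N(t_j)\|_{L^6}^2+\|u^N_j\|_{L^6}^2\big).
\end{align*}

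Collecting these contributions gives $\|e_m\|_{\H}\le S_m+Ch\sum_{l}\rho_l(\|e_l\|_{\H}+\|e_{l+1}\|_{\H})$, where $S_m$ gathers the linear, consistency and stochastic terms (the last through its running maximum). For $h\le h_0$ the implicit term $\|e_{l+1}\|_{\H}$ is absorbed after reindexing, and a pathwise discrete Gronwall inequality of the type in \cite{Drag03} gives $\|e_m\|_{\H}\le C\big(\max_{k\le m}S_k\big)\exp\!\big(Ch\sum_l\rho_l\big)$; taking $L^{2p}(\OOO)$ norms and applying Hölder's inequality separates the source, whose $L^{4p}(\OOO)$ norm is $O(h^{\gamma/2})$ by the moment/regularity estimates and the Doob--Burkholder--Davis--Gundy maximal bound for the martingale part, from the weight $\exp(Ch\sum_l\rho_l)$, whose $L^{4p}(\OOO)$ norm is bounded uniformly in $N$ and $h$ exactly by Corollary \ref{cor;EIsolu} (for the factors $\|u^N(t_j)\|_{L^6}^2$) and Proposition \ref{lm;expoSp} (for $\|u^N_j\|_{L^6}^2$). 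I expect the main obstacle to be precisely this Gronwall closure: a naive treatment of the cubic nonlinearity forces coefficients growing like $\exp(c/h)$, and it is the pathwise exponential Gronwall factor split off by Hölder and controlled by the uniform exponential integrability of \emph{both} $X^N$ and $X^N_m$ that removes the obstruction. The secondary difficulty is securing the sharp rate $h^{\gamma/2}$ for rough data $\beta\in[1,2)$, which rules out naive summation of one–step errors and forces the interpolated propagator estimate together with the careful consistency analysis above.
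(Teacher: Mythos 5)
Your proposal reproduces the architecture of the paper's own proof: the same comparison of the mild form \eqref{sol;sga} with the iterated discrete variation-of-constants formula from \eqref{sch;mild2}, the same treatment of the initial and stochastic parts via the isometry of Lemma \ref{lm;BINAest}, the one-step estimates of Lemma \ref{err;era}, the H\"older continuity of the group in Lemma \ref{lm;prpE_n} and the Burkholder--Davis--Gundy inequality, and, crucially, the same closure of the error recursion by a pathwise discrete Gronwall inequality followed by H\"older's inequality, with the exponential weight $\exp(Ch\sum_l\rho_l)$ controlled by the uniform exponential integrability of \emph{both} $u^N$ (Corollary \ref{cor;EIsolu}) and $u^N_m$ (Proposition \ref{lm;expoSp}). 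The consistency analysis (quadrature error of the AVF average, H\"older continuity of $u^N$ from Lemma \ref{lm;Holderu^N}, fractional propagator bounds) and the final moment bookkeeping also match the paper's.

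There is, however, one genuine gap, located exactly where you deviate from the paper: your drift stability term. By comparing the AVF average built from the exact values $u^N(t_l),u^N(t_{l+1})$ with the one built from the numerical values $u^N_l,u^N_{l+1}$, you obtain $Ch\,\rho_l(\|e_l\|_{\H}+\|e_{l+1}\|_{\H})$, so the recursion contains the current error $\|e_m\|_{\H}$ on the right-hand side with coefficient $Ch\rho_{m-1}$. You claim this implicit term ``is absorbed after reindexing'' for $h\le h_0$; but $\rho_{m-1}=1+\sum_{j\in\{m-1,m\}}(\|u^N(t_j)\|_{L^6}^2+\|u^N_j\|_{L^6}^2)$ is a random variable admitting no deterministic almost-sure bound (only its exponential moments are finite), so no deterministic $h_0$ makes $Ch\rho_{m-1}\le \tfrac12$ almost surely, and the pathwise absorption fails on the event where $\rho_{m-1}$ is large. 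The paper's splitting is designed precisely to avoid this: it inserts the single-point value $f(u^N_j)$ between $f(u^N(t_j))$ and the AVF average, so the stability term ($\MyRoman{2}_2$) involves only the past error $\|\varepsilon_j\|_{\H}$, $j\le m-1$, while the AVF-versus-single-point difference ($\MyRoman{2}_3$) is bounded with no reference to the error at all, using the scheme's own increment identity $u^N_{j+1}-u^N_j=\frac h2(\bar v^N_{j+1}+v^N_j)$, which yields the pure source term $Ch^2(\|u^N_j\|_{L^6}^2+\|u^N_{j+1}\|_{L^6}^2+1)(\|\bar v^N_{j+1}\|_{L^2}+\|v^N_j\|_{L^2})$ entering $\psi_j$. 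Your argument can be repaired either by adopting this splitting, or by an event-splitting patch: absorb on $\{Ch\rho_{m-1}\le\tfrac 12\}$, and on the complement use the Gaussian-type tail $\P(\rho_{m-1}>1/(2Ch))\le C\exp(-c/h)$ (available from the exponential integrability) together with crude moment bounds on $\|e_m\|_{\H}$, which still gives a contribution $O(h^{\gamma p})$. As written, though, the absorption step is not justified.
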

\begin{proof}
	\iffalse
	{\color{green} 
		The solution  of \eqref{sch;mild2}
		can be rewritten as
		\begin{equation}\label{17}
		\begin{split}
		&X^N_m=(\B^{-1}(h)\AA(h))^mX^N_0
		+\sum\limits_{j=0}^{m-1}
		(\B^{-1}(h)\AA(h))^{(m-1-j)}
		\begin{pmatrix}
		0\\
		P_N(\delta W_j)
		\end{pmatrix}\\
		&+\sum\limits_{j=0}^{m-1}
		(\B^{-1}(h)\AA(h))^{(m-1-j)}\B^{-1}(h)
		\begin{pmatrix}
		0\\
		hP_N(\int_0^1 f(u^N_j+\theta(u^N_{j+1}-u^N_j))d\theta)
		\end{pmatrix}.
		\end{split}
		\end{equation}
		Recall that the mild solution \eqref{sol;sga} can be expressed as
		\begin{equation}
		\begin{split}
		\label{mild}
		X^N(t_{m})=E(t_m)X^N_0&+
		\sum\limits_{j=0}^{m-1}\int_{t_j}^{t_{j+1}}
		E(t_{m}-s)[
		\F_N(X^N(s))ds+\G_NdW(s)].
		\end{split}
		\end{equation}
	}
	\fi
	Let $\varepsilon_i=X^N(t_{i})-X^N_{i}$ for $i\in\mathbb Z_{M+1}$. 
	Fix $m\in\{1,\cdots,M\}.$ 
	From the equations of $X^N(t_{m})$ and $X^N_{m}$ it follows that 
	\begin{align*}
	\label{err}
	\varepsilon_{m}=
	&(E(t_m)-(\B^{-1}(h)\AA(h))^m)X^N_0\\
	&+\sum\limits_{j=0}^{m-1}
	\int_{t_j}^{t_{j+1}}
	\left(
	E(t_{m}-s)
	-(\B^{-1}(h)\AA(h))^{(m-1-j)}
	\right)
	\begin{pmatrix}
	0\\
	P_NdW(s)
	\end{pmatrix}
	\\
	&+
	\sum\limits_{j=0}^{m-1}
	\int_{t_j}^{t_{j+1}}
	\Bigg(E(t_{m}-s)
	\begin{pmatrix}
	0\\
	P_N(f(u^N(s)))
	\end{pmatrix}
	-(\B^{-1}(h)\AA(h))^{(m-1-j)}\\
	&\quad\cdot\B^{-1}(h)
	\begin{pmatrix}
	0\\
	P_N(\int_0^1 f(u^N_j+\theta(u^N_{j+1}-u^N_j))d\theta)
	\end{pmatrix}
	\Bigg)ds.
	\end{align*}
	This implies that 
	\begin{align*}
	\|\varepsilon_{m}\|_{\H}\leq
	&\|(E(t_m)-(\B^{-1}(h)\AA(h))^m)X^N_0\|_{\H}\\
	&+\left\|
	\sum\limits_{j=0}^{m-1}
	\int_{t_j}^{t_{j+1}}
	\left(
	E(t_{m}-s)
	-(\B^{-1}(h)\AA(h))^{(m-1-j)}
	\right)
	\begin{pmatrix}
	0\\
	P_NdW(s)
	\end{pmatrix}
	\right\|_{\H}
	\\
	&+
	\Bigg\|
	\sum\limits_{j=0}^{m-1}
	\int_{t_j}^{t_{j+1}}
	\Bigg(E(t_{m}-s)
	\begin{pmatrix}
	0\\
	P_N(f(u^N(s)))
	\end{pmatrix}
	-(\B^{-1}(h)\AA(h))^{(m-1-j)}\\
	&\quad\cdot\B^{-1}(h)
	\begin{pmatrix}
	0\\
	P_N(\int_0^1 f(u^N_j+\theta(u^N_{j+1}-u^N_j))d\theta)
	\end{pmatrix}\Bigg)ds
	\Bigg\|_{\H}\\
	\leq &Ch^{\frac \gamma 2}\|X^N_0\|_{\H^\gamma}
	+Err_m^1+Err_m^2.
	\end{align*}
	We decompose the term $Err_m^2$ into several parts as following
	\begin{align*}
	Err_m^2
	\leq&
	\Bigg\|
	\sum\limits_{j=0}^{m-1}
	\int_{t_j}^{t_{j+1}}
	(E(t_{m}-s)-E(t_{m-1}-s)\B^{-1}(h))
	\begin{pmatrix}
	0\\
	P_N(f(u^N(s)))
	\end{pmatrix}ds
	\Bigg\|_{\H}\\
	+&
	\Bigg\|
	\sum\limits_{j=0}^{m-1}
	\int_{t_j}^{t_{j+1}}
	E(t_{m-1}-s)\B^{-1}(h)
	\begin{pmatrix}
	0\\
	P_N(f(u^N(s)))
	\end{pmatrix}
	-(\B^{-1}(h)\AA(h))^{(m-1-j)}\\
	&\quad\cdot\B^{-1}(h)
	\begin{pmatrix}
	0\\
	P_N(\int_0^1 f(u^N_j+\theta(u^N_{j+1}-u^N_j))d\theta)
	\end{pmatrix}ds
	\Bigg\|_{\H}=:\MyRoman{1}+\MyRoman{2}.
	\end{align*}
	Using the H\"older inequality, the  Young inequality and \eqref{est;E_N_2}, we have
	\begin{align*}
	\MyRoman{1}%\leq &
	%\sum\limits_{j=0}^{m-1}
	%\int_{t_j}^{t_{j+1}}
	%\Bigg\|
	%\Bigg((E(t_{m}-s)-E(t_{m-1}-s))\B^{-1}(h)
	%\begin{pmatrix}
	%0\\
	%P_N(f(u^N(s)))
	%\end{pmatrix}\Bigg\|_{\H}
	%ds\\
	\leq &
	\sum\limits_{j=0}^{m-1}
	\int_{t_j}^{t_{j+1}}
	\Bigg\|
	(E(h)-\B^{-1}(h))
	\begin{pmatrix}
	0\\
	P_N(f(u^N(s)))
	\end{pmatrix}\Bigg\|_{\H}
	ds\\
	\leq &
	Ch\sum\limits_{j=0}^{m-1}
	\int_{t_j}^{t_{j+1}}
	\|
	f(u^N(s))\|_{L^2}
	ds.
	\end{align*}
	For the term $\MyRoman{2},$ denoting $[\frac sh
	]$ the integer part of $\frac sh,$ we obtain
	{\small
		\begin{align*}
		&\MyRoman{2}\\
		\leq &
		\Bigg\|
		\sum\limits_{j=0}^{m-1}
		\int_{t_j}^{t_{j+1}}
		E(t_{m-1}-s)\B^{-1}(h)
		\begin{pmatrix}
		0\\
		P_N(f(u^N(s))-f(u^N(t_{[\frac sh]}))
		\end{pmatrix}ds
		\Bigg\|_{\H}\\
		+&
		\Bigg\|
		\sum\limits_{j=0}^{m-1}
		\int_{t_j}^{t_{j+1}}
		E(t_{m-1}-s)\B^{-1}(h)
		\begin{pmatrix}
		0\\
		P_N(f(u^N(t_{[\frac sh]}))-f(u^N_{[\frac sh]}))
		\end{pmatrix}ds
		\Bigg\|_{\H}\\
		+&
		\Bigg\|
		\sum\limits_{j=0}^{m-1}
		\int_{t_j}^{t_{j+1}}
		E(t_{m-1}-s)\B^{-1}(h)
		\begin{pmatrix}
		0\\
		P_N(f(u^N_{[\frac sh]})-\int_0^1 f(u^N_j+\theta(u^N_{j+1}-u^N_j))d\theta)
		\end{pmatrix}ds
		\Bigg\|_{\H}\\
		+&
		\Bigg\|
		\sum\limits_{j=0}^{m-1}
		\int_{t_j}^{t_{j+1}}
		(E(t_{m-1}-s)-(\B^{-1}(h)\AA(h))^{(m-1-j)})\B^{-1}(h)\\
		&\quad\cdot
		\begin{pmatrix}
		0\\
		P_N(\int_0^1 f(u^N_j+\theta(u^N_{j+1}-u^N_j))d\theta)
		\end{pmatrix}ds
		\Bigg\|_{\H}
		=:\MyRoman{2}_1+\MyRoman{2}_2+\MyRoman{2}_3+\MyRoman{2}_4.
		\end{align*}
	}
	Now we estimate $\MyRoman{2}_i,$ $i=1,2,3,4,$ separately.
	For the first term $\MyRoman{2}_1$, we have 
	\begin{align*}
	\MyRoman{2}_1%\leq &
	%\sum\limits_{j=0}^{m-1}
	%\int_{t_j}^{t_{j+1}}
	%\Bigg\|\B^{-1}(h)
	%\begin{pmatrix}
	%0\\
	%P_N(f(u^N(s))-f(u^N(t_{[\frac sh]}))
	%\end{pmatrix}
	%\Bigg\|_{\H}
	%ds\\
	\leq&
	\sum\limits_{j=0}^{m-1}
	\int_{t_j}^{t_{j+1}}
	\left\|\MM^{-1}(h)\frac{h}{2}P_N(f(u^N(s))-f(u^N(t_{[\frac sh]})))\right\|_{L^2}ds\\
	&+\sum\limits_{j=0}^{m-1}
	\int_{t_j}^{t_{j+1}}
	\left\|\MM^{-1}(h)P_N(f(u^N(s))-f(u^N(t_{[\frac sh]})))\right\|_{\dot{\H}^{-1}}ds.
	\end{align*}
	By means of the inequality $\left|\frac{2h}{4+\lambda_ih^2}\right| \leq C\lambda_i^{-\frac 12},$ where $C>1$, we have 
	\begin{align*}
	&\MyRoman{2}_1\leq C\sum\limits_{j=0}^{m-1}
	\int_{t_j}^{t_{j+1}}
	\left\|P_N(f(u^N(s))-f(u^N(t_{[\frac sh]})))\right\|_{\dot{\H}^{-1}}ds\\
	&\leq 
	C\sum\limits_{j=0}^{m-1}
	\int_{t_j}^{t_{j+1}}
	\left\|
	(u^N(s))^2+(u^N(t_{[\frac sh]}))^2+u^N(s)u^N(t_{[\frac sh]}))(u^N(s)-u^N(t_{[\frac sh]}))
	\right\|_{\dot{\H}^{-1}}ds\\
	&+
	C\sum\limits_{j=0}^{m-1}
	\int_{t_j}^{t_{j+1}}
	\left\|
	(u^N(s)+u^N(t_{[\frac sh]}))(u^N(s)-u^N(t_{[\frac sh]}))
	\right\|_{\dot{\H}^{-1}}ds\\
	&+
	C\sum\limits_{j=0}^{m-1}
	\int_{t_j}^{t_{j+1}}
	\left\|u^N(s)-u^N(t_{[\frac sh]})
	\right\|_{\dot{\H}^{-1}}ds.
	\end{align*}
	Based on the Young inequality, Sobolev embedding $L^{\frac 65}\hookrightarrow\dot{\H}^{-1}$ and the H\"older inequality,
	\begin{align*}
	\MyRoman{2}_1%\leq&C\sum\limits_{j=0}^{m-1}
	%\int_{t_j}^{t_{j+1}}
	%(\|u^N(s)\|_{L^6}^2+\|u^N(t_{[\frac sh]})\|_{L^6}^2)
	%\|u^N(s)-u^N(t_{[\frac sh]})\|_{L^2}ds\\
	%&+C\sum\limits_{j=0}^{m-1}
	%\int_{t_j}^{t_{j+1}}
	%(\|u^N(s)\|_{L^6}+\|u^N(t_{[\frac sh]})\|_{L^6})
	%\|u^N(s)-u^N(t_{[\frac sh]})\|_{L^2}ds\\
	%&+C\sum\limits_{j=0}^{m-1}
	%\int_{t_j}^{t_{j+1}}
	%\left\|u^N(s)-u^N(t_{[\frac sh]})\right\|_{L^2}ds\\
	\leq &
	C\sum\limits_{j=0}^{m-1}
	\int_{t_j}^{t_{j+1}}
	(\|u^N(s)\|_{L^6}^2+\|u^N(t_{[\frac sh]})\|_{L^6}^2+1)
	\|u^N(s)-u^N(t_{[\frac sh]})\|_{L^2}ds.
	\end{align*}
	Similarly, the term $\MyRoman{2}_2$ satisfies
	$$\MyRoman{2}_2\leq
	C\sum\limits_{j=0}^{m-1}
	\int_{t_j}^{t_{j+1}}
	(\|u^N(t_{[\frac sh]})\|_{L^6}^2+\|u^N_{[\frac sh]}\|_{L^6}^2+1)
	\|u^N(t_{[\frac sh]})-u^N_{[\frac sh]}\|_{L^2}ds.
	$$
	For the term $\MyRoman{2}_3,$ we have
	%{\color{red}? Wrong, please check all the Taylor expansion you use. We just use integral form of the remainder in Taylor expansion}
	\begin{align*}
	\MyRoman{2}_3%\leq &
	%\sum\limits_{j=0}^{m-1}
	%\int_{t_j}^{t_{j+1}}
	%\int_0^1
	%\|
	%f(u^N_j)-f(u^N_j+\theta(u^N_{j+1}-u^N_j))
	%\|_{\dot{\H}^{-1}}
	%d\theta ds
	%\\
	\leq &
	\sum\limits_{j=0}^{m-1}
	\int_{t_j}^{t_{j+1}}
	\int_0^1
	\|
	\int_0^\theta f'(\lambda u^N_j+(1-\lambda)\theta(u^N_{j+1}-u^N_j))(u^N_{j+1}-u^N_j)d\lambda
	\|_{\dot{\H}^{-1}}
	d\theta ds.
	\end{align*}
	
	Then $u^N_{j+1}-u^N_{j}=\frac h2(\bar v^N_{j+1}+v^N_{j})$ and the fact that $L^{\frac 65}\hookrightarrow\dot{\H}^{-1}$ yield that
	\begin{align*}
	\MyRoman{2}_3\leq &
	Ch^2\sum\limits_{j=0}^{m-1}
	(\|u^N_j\|^2_{L^6}+\|u^N_{j+1}\|^2_{L^6}+1)
	(\|\bar v^N_{j+1}\|_{L^2}+\|v^N_{j}\|_{L^2}).
	\end{align*}
	With respect to the term $\MyRoman{2}_4,$ by using Lemma \ref{lm;prpE_n} and Lemma \ref{err;era}, we have
	\begin{align*}
	\MyRoman{2}_4\leq &
	\Bigg\|
	\sum\limits_{j=0}^{m-1}
	\int_{t_j}^{t_{j+1}}
	(E(t_{m-1}-s)-
	E(t_{m-1}-t_j))\B^{-1}(h)\\
	&\quad\cdot
	\begin{pmatrix}
	0\\
	P_N(\int_0^1 f(u^N_j+\theta(u^N_{j+1}-u^N_j))d\theta)
	\end{pmatrix}ds
	\Bigg\|_{\H}\\
	&+
	\Bigg\|
	\sum\limits_{j=0}^{m-1}
	\int_{t_j}^{t_{j+1}}
	(E(t_{m-1}-t_j)-(\B^{-1}(h)\AA(h))^{(m-1-j)})\B^{-1}(h)\\
	&\quad\cdot
	\begin{pmatrix}
	0\\
	P_N(\int_0^1 f(u^N_j+\theta(u^N_{j+1}-u^N_j))d\theta)
	\end{pmatrix}ds
	\Bigg\|_{\H}\\
	\leq &
	\sum\limits_{j=0}^{m-1}
	\int_{t_j}^{t_{j+1}}(s-t_j)\Bigg\|\B^{-1}(h)\begin{pmatrix}
	0\\
	P_N(\int_0^1 f(u^N_j+\theta(u^N_{j+1}-u^N_j))d\theta)
	\end{pmatrix}\Bigg\|_{\H^1}ds
	\\
	&+
	\sum\limits_{j=0}^{m-1}
	\int_{t_j}^{t_{j+1}}h^\frac \gamma 2
	\Bigg\|\B^{-1}(h)
	\begin{pmatrix}
	0\\
	P_N(\int_0^1 f(u^N_j+\theta(u^N_{j+1}-u^N_j))d\theta)
	\end{pmatrix}\Bigg\|_{\H^\gamma}ds
	\\
	\leq &
	C \sum\limits_{j=0}^{m-1}
	\int_{t_j}^{t_{j+1}}(s-t_j)\int_{0}^{1}
	\| f(u^N_j+\theta(u^N_{j+1}-u^N_j))\|_{L^2}d\theta ds\\
	&+
	Ch^\frac \gamma 2\sum\limits_{j=0}^{m-1}
	\int_{t_j}^{t_{j+1}}
	\int_0^1\|f(u^N_j+\theta(u^N_{j+1}-u^N_j))\|_{\dot\H^{\gamma-1}}d\theta ds.
	\end{align*}
	For the case that $d=1,\beta\geq 1,$ the Sobolev embedding $\dot\H^{\frac 12^+}\hookrightarrow L^\infty$ leads to
	\begin{equation}
\begin{split}
	\label{24estd1}
	&\int_{t_j}^{t_{j+1}}
	\int_0^1\|f(u^N_j+\theta(u^N_{j+1}-u^N_j))\|_{\dot\H^{\gamma-1}}d\theta ds\\
	\le& C\int_{t_j}^{t_{j+1}}
	\int_0^1
	(1+\|u^N_j+\theta(u^N_{j+1}-u^N_j)\|_{L^\infty}^2)\|u^N_j+\theta(u^N_{j+1}-u^N_j)\|_{\dot \H^{\gamma -1}}d\theta ds\\
	%\le& C\int_{t_j}^{t_{j+1}}
	%(1+\|u^N_j\|_{\dot \H^{{\frac 12}^+}}^2
	%+\|u^N_{j+1}\|_{\dot \H^{{\frac 12}^+}}^2
	%)
	%(\|u^N_j\|_{\dot \H^{\beta -1}}
	%+\|u^N_{j+1}\|_{\dot \H^{\beta -1}}
	%)
	%ds\\
	\le& Ch(1+\|u^N_j\|_{\dot \H^{{\frac 12}^+}}^2
	+\|u^N_{j+1}\|_{\dot \H^{{\frac 12}^+}}^2
	)
	(\|u^N_j\|_{\dot \H^{\gamma -1}}
	+\|u^N_{j+1}\|_{\dot \H^{\gamma -1}}).
	\end{split}
\end{equation}
	For the case that $d=2,$ $\beta=2,$ using the Sobolev embedding  $\dot\H^{1^+}\hookrightarrow L^\infty,$
	we have
\begin{equation}
	\begin{split}
	&\int_{t_j}^{t_{j+1}}
	\int_0^1\|f(u^N_j+\theta(u^N_{j+1}-u^N_j))\|
	_{\dot\H^{\gamma-1}}d\theta ds\\
	\leq &
	Ch(1+\|u^N_j\|_{\dot\H^{1^+}}^2
	+\|u^N_{j+1}\|_{\dot\H^{1^+}}^2
	)
	(\|u^N_j\|_{\dot\H^{1}}
	+\|u^N_{j+1}\|_{\dot\H^{1}})\label{24estd2}.
\end{split}	
\end{equation}
	
	For the sake of simplicity, we only take the case $d=2$ to illustrate the derivation of the strong convergent order, and omit the proof for the case that $d=1$ and $\beta>1,$ since the strong convergent order  can be obtained by replacing \eqref{24estd2} with \eqref{24estd1} for $\MyRoman{2}_4$ when $d=1.$ 
	Based on \eqref{24estd2}, we have
	\begin{align*}
	\MyRoman{2}_4
	\leq &
	Ch^\frac \gamma 2h\sum\limits_{j=0}^{m-1}
	(1+\|u^N_j\|_{\dot \H^{1^+}}^2
	+\|u^N_{j+1}\|_{\dot \H^{1^+}}^2
	)
	(\|u^N_j\|_{\dot \H^{\gamma -1}}
	+\|u^N_{j+1}\|_{\dot \H^{\gamma -1}}).
	\end{align*}
	Based on the estimates of $\MyRoman{1}$ and $\MyRoman{2}$, we obtain 
	\begin{align}\label{main-est}
	\|\varepsilon_{m}\|_{\H}\leq
	\sum\limits_{j=0}^{m-1}
	\Phi_j
	\|\varepsilon_{j}\|_{\H}+
	\sum\limits_{j=0}^{m-1}
	\psi_j+Err_m^1
	+C(h^{\frac \gamma 2}\|X^N_0\|_{\H^\gamma}),
	\end{align}
	where for $j=0,1,\cdots,(m-1),$ $\Phi_j=Ch(\|u^N(t_j)\|_{L^6}^2+\|u^N_j\|_{L^6}^2+1),$ and
	\begin{align*}
	\psi_j=
	&Ch
	\int_{t_j}^{t_{j+1}}
	\|
	(f(u^N(s)))\|_{L^2}
	ds\\
	&+C
	h^\frac \gamma 2h
	(1+\|u^N_j\|_{\dot \H^{1^+}}^2
	+\|u^N_{j+1}\|_{\dot \H^{1^+}}^2
	)
	(\|u^N_j\|_{\dot \H^{\gamma -1}}
	+\|u^N_{j+1}\|_{\dot \H^{\gamma -1}})\\
	&+
	C
	\int_{t_j}^{t_{j+1}}
	(\|u^N(s)\|_{L^6}^2+\|u^N(t_{[\frac sh]})\|_{L^6}^2+1)
	\|u^N(s)-u^N(t_{[\frac sh]})\|_{L^2}ds\\
	&+Ch^2
	(\|u^N_j\|^2_{L^6}+\|u^N_{j+1}\|^2_{L^6}+1)
	(\|\bar v^N_{j+1}\|_{L^2}+\|v^N_{j}\|_{L^2})
	.
	\end{align*}
	By the discrete Gronwall's inequality (see e.g., \cite[Lemma 2.6]{CHLZ17b}), we have
	\begin{align*}
	\|\varepsilon_{m}\|_{\H}\leq &
	\left(
	Ch^{\frac \gamma 2}\|X^N_0\|_{\H^\gamma}
	+Err_m^1
	+
	\sum\limits_{j=0}^{m-1}
	\psi_j
	\right)
	\exp
	\left(
	\sum\limits_{j=0}^{m-1}
	\Phi_j
	\right).
	\end{align*}
	Taking the $2p$th moment and using the H\"older inequality, we have
	\begin{align*}
	&\E\|\varepsilon_{m}\|_{\H}^{2p}
	\leq
	\left[\E
	\left(
	Ch^{\frac \gamma 2}\|X^N_0\|_{\H^\gamma}
	+Err_m^1
	+
	\sum\limits_{j=0}^{m-1}
	\psi_j
	\right)^{4p}
	\right]^\frac 12
	\left[\E
	\exp
	\left(
	4p\sum\limits_{j=0}^{m-1}
	\Phi_j
	\right)
	\right]^\frac 12.
	\end{align*}
	According to the exponential integrability of $u^N$ and $u^N_m,$ the above inequality becomes
	\begin{align*}
	\E\|\varepsilon_{m}\|_{\H}^{2p}
	\leq &
	%C\left[\E
	%\left(
	%Ch^{\frac \beta 2}\|X^N_0\|_{\H^\beta}
	%+Err_m^1
	%+
	%\sum\limits_{j=0}^{m-1}
	%\psi_j
	%\right)^{4p}
	%\right]^\frac 12\\
	%\leq &
	%Ch^{\beta p}
	%\|X^N_0\|_{\H^\beta}^{2p}
	%+
	%C\left[\E
	%(Err^1_m)^{4p}
	%\right]^{\frac 12}
	%+
	%C\left[\E
	%\left(
	%\sum\limits_{j=0}^{m-1}
	%\psi_j
	%\right)^{4p}
	%\right]^\frac 12\\
	%\leq &
	Ch^{\gamma p}
	\|X^N_0\|_{\H^\gamma}^{2p}
	+
	C\left[\E
	(Err^1_m)^{4p}
	\right]^{\frac 12}
	+
	Cm^{2p-\frac 12}
	\left[
	\sum\limits_{j=0}^{m-1}
	\E
	\psi_j^{4p}
	\right]^\frac 12.
	\end{align*} 
	Thanks to the Burkholder--Davis--Gundy inequality and properties of the group in Lemma \ref{lm;prpE_n} and Lemma \ref{err;era}, we obtain
	\begin{align*}
	\E
	(Err^1_m)^{4p}
	%=&
	%\E
	%\left\|\sum\limits_{j=0}^{m-1}
	%\int_{t_j}^{t_{j+1}}
	%\left(
	%E(t_{m}-s)
	%-(\B^{-1}(h)\AA(h))^{(m-1-j)}
	%\right)
	%\G_NdW(s)
	%\right\|_{\H}^{4p}\\
	%\leq &
	%\E
	%\left\|
	%\int_{0}^{t_m}
	%\left(
	%E(t_{m}-s)
	%-(\B^{-1}(h)\AA(h))^{(m-1-[\frac sh])}
	%\right)
	%\G_NdW(s)
	%\right\|_{\H}^{4p}\\
	\leq &
	\left[\int_{0}^{t_m}
	\left\|
	(E(t_{m}-s)
	-(\B^{-1}(h)\AA(h))^{(m-1-[\frac sh])}
	)
	\widetilde \G_N\circ \Q^\frac 12
	\right\|_{\mathcal L_2(\mathbb H)}^2ds
	\right]^{2p}\\
	\leq &
	C\sup_{0\leq s\leq t_m}
	\left\|
	(E(t_{m}-s)
	-E(t_{m-1}-t_{[\frac sh]})
	)
	\widetilde \G_N\circ \Q^\frac 12
	\right\|_{\mathcal L_2(\mathbb H)}^{4p}\\
	+&
	C\sup_{0\leq s\leq t_m}
	\left\|
	(E(t_{m-1}-t_{[\frac sh]})
	-(\B^{-1}(h)\AA(h))^{(m-1-[\frac sh])}
	)
	\widetilde \G_N\circ \Q^\frac 12
	\right\|_{\mathcal L_2(\mathbb H)}^{4p}\\
	\leq &
	C(h^{2\gamma p}+h^{4p}),
	\end{align*}
	which leads to
	\begin{align*}
	\E\|\varepsilon_{m}\|_{\H}^{2p}
	\leq
	Ch^{\gamma p}
	\|X^N_0\|_{\H^\gamma}^{2p}
	+
	C(h^{\gamma p}+h^{2p})
	+
	Cm^{2p-\frac 12}
	\left[
	\sum\limits_{j=0}^{m-1}
	\E
	\psi_j^{4p}
	\right]^\frac 12.
	\end{align*}
	According to the H\"older inequality, the a prior estimates of $u^N$ and $u^N_m$ and the H\"older continuity of $u^N,$ we obtain
	\begin{align*}
	\E\psi_j^{4p}\leq
	&Ch^{4p}
	\E\left(\int_{t_j}^{t_{j+1}}
	\|
	f(u^N(s))\|_{L^2}
	ds\right)^{4p}\\
	&+
	Ch^{2\gamma p}h^{4p}
	\E\left(
	(1+\|u^N_j\|_{\dot \H^{1^+}}^2
	+\|u^N_{j+1}\|_{\dot \H^{1^+}}^2
	)
	(\|u^N_j\|_{\dot \H^{\gamma -1}}
	+\|u^N_{j+1}\|_{\dot \H^{\gamma -1}})
	\right)^{4p}\\
	&+
	C\E\left(
	\int_{t_j}^{t_{j+1}}
	(\|u^N(s)\|_{L^6}^2+\|u^N(t_{[\frac sh]})\|_{L^6}^2+1)
	\|u^N(s)-u^N(t_{[\frac sh]})\|_{L^2}ds\right)^{4p}\\
	&+Ch^{8p}
	\E\left[(\|u^N_j\|^2_{L^6}+\|u^N_{j+1}\|^2_{L^6}+1)
	(\|\bar v^N_{j+1}\|_{L^2}+\|v^N_{j}\|_{L^2})
	\right]^{4p}\\
	\leq& C(h^{8p}+h^{4p+2\gamma p}+h^{8p}+h^{8p})\leq Ch^{8p}+Ch^{4p+2\gamma p}.
	\end{align*}
	This yields that
	\begin{align*}
	\E\|\varepsilon_{m}\|_{\H}^{2p}
	\leq
	Ch^{\gamma p}
	\E\|X^N_0\|_{\H^\gamma}^{2p}
	+
	C(h^{\gamma p}+h^{2p})
	+
	Cm^{2p} h^{2p+\gamma p}+Cm^{2p}h^{4p}
	\leq Ch^{\gamma p},
	\end{align*}
	which completes the proof.
\end{proof}

\iffalse
When we focus on the convergence rate towards to the position $u$, the convergence order is of order 1.
\begin{prop}
	Assume that $d=1$, $\beta\ge 1$ or that $d\le 2$, $\beta=2$.  Suppose that $X_0\in \H^\beta,$ $\|(-\Lambda)^{\frac {\beta-1}2}\|_{\mathcal L_2^0}<\infty$.
	There exists a constant $C:=C(X_0,\Q,T)$ such that
	\begin{align*}
	\sup\limits_{m\in \Z_{M}}\E\left[\|u^N(t_m)-u^N_m\|^{2p}\right]\leq Ch^{2p}.
	\end{align*}
\end{prop}
\begin{proof}

\end{proof}

\fi

The above convergence result in Proposition \ref{tm;2}, together with Proposition \ref{con-spa}, implies the following strong convergence theorem.

\begin{theorem}
	\label{tm}
	Let $d=1$, $\beta\ge 1$ {\rm (}or $d=2$, $\beta=2${\rm )}, $\gamma=\min{(\beta,2)}$ and $T>0.$ 
	Assume that $X_0\in \H^\beta,$ $\|(-\Lambda)^{\frac {\beta-1}2}\Q^\frac 12\|_{\mathcal L_2(L^2)}<\infty$. There exists $h_0>0$ such that for $h\le h_0$ and $p\geq 1,$
	\begin{align*}
	\sup\limits_{m\in \Z_{M+1}}\E\left[
	(\|u(t_m)-u^N_m\|_{L^2}^2+\|v(t_m)-v^N_m\|_{\dot{\H}^{-1}}^2)^p\right]
	\leq C\left(h^{\gamma p}+\lambda_N^{-\beta p}\right),
	\end{align*}
	where $C=C(X_0,\Q,T,p)>0$, $N\in \N^+, m\in \mathbb Z_{M+1}$, $M\in \mathbb N^+, Mh=T.$
\end{theorem}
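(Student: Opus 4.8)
The plan is to split the full error $X(t_m)-X^N_m$ into the spatial discretization error and the temporal discretization error, and then to invoke the two convergence estimates already at our disposal. The key observation is that, since $\H=L^2\times\dot\H^{-1}$, the left-hand side is nothing but the $\H$-norm error, namely
$$\|u(t_m)-u^N_m\|_{L^2}^2+\|v(t_m)-v^N_m\|_{\dot\H^{-1}}^2=\|X(t_m)-X^N_m\|_{\H}^2,$$
where $X=(u,v)^\top$ is the mild solution of \eqref{mod;swe1} and $X^N_m=(u^N_m,v^N_m)^\top$ is the full discrete solution of \eqref{sche;sbes}.

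First I would insert the exact solution $X^N(t_m)$ of the spatial semidiscretization \eqref{mod;sga} at the node $t_m$ as an intermediate quantity and apply the triangle inequality,
$$\|X(t_m)-X^N_m\|_{\H}\leq\|X(t_m)-X^N(t_m)\|_{\H}+\|X^N(t_m)-X^N_m\|_{\H}.$$
Raising this to the $2p$th power, using the elementary inequality $(a+b)^{2p}\leq 2^{2p-1}(a^{2p}+b^{2p})$ and taking expectations yields
$$\E\bigl[\|X(t_m)-X^N_m\|_{\H}^{2p}\bigr]\leq 2^{2p-1}\E\bigl[\|X(t_m)-X^N(t_m)\|_{\H}^{2p}\bigr]+2^{2p-1}\E\bigl[\|X^N(t_m)-X^N_m\|_{\H}^{2p}\bigr].$$

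The two terms on the right are then controlled by the results already proved. For the spatial error, Proposition \ref{con-spa} (applied with the exponent $2p$ in place of $p$, which is legitimate since the hypotheses on $\beta$, $X_0$ and $\Q$ coincide with those of the present theorem) gives $\|X^N-X\|_{L^{2p}(\OOO;\mathcal C([0,T];\H))}=O(\lambda_N^{-\beta/2})$; since $\E[\|X(t_m)-X^N(t_m)\|_{\H}^{2p}]\leq\E[\sup_{t\in[0,T]}\|X(t)-X^N(t)\|_{\H}^{2p}]$, this bound is uniform in $m$ and delivers $\sup_{m}\E[\|X(t_m)-X^N(t_m)\|_{\H}^{2p}]\leq C\lambda_N^{-\beta p}$. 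For the temporal error, Proposition \ref{tm;2} provides precisely $\sup_{m\in\Z_{M+1}}\E[\|X^N(t_m)-X^N_m\|_{\H}^{2p}]\leq Ch^{\gamma p}$ with $\gamma=\min(\beta,2)$. Adding the two contributions and taking the supremum over $m$ produces the asserted bound $C(h^{\gamma p}+\lambda_N^{-\beta p})$.

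I do not expect any genuine obstacle in this final step: the theorem is essentially an assembly of the spatial rate from Proposition \ref{con-spa} and the temporal rate from Proposition \ref{tm;2}, and all the analytic difficulty---the exponential integrability estimates of Lemma \ref{explemma;solu} and Proposition \ref{lm;expoSp}, the higher regularity bounds, and the discrete Gronwall argument underlying \eqref{main-est}---has already been absorbed into those two propositions. The only points that warrant a brief check are the identification of the target norm with the $\H^0$-norm and the compatibility of the hypotheses across the three statements, both of which are immediate.
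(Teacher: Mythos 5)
Your proposal is correct and follows exactly the paper's own route: the paper states Theorem \ref{tm} as an immediate consequence of combining the spatial rate in Proposition \ref{con-spa} with the temporal rate in Proposition \ref{tm;2}, which is precisely your triangle-inequality decomposition through the semidiscrete solution $X^N(t_m)$. Your identification of the target quantity with the $\H$-norm and the exponent bookkeeping simply make explicit the details the paper leaves to the reader.
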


\section{Numerical experiments}
\label{sec5}
This section illustrates numerically the main results of this paper by considering the following 2-dimensional stochastic wave equation with cubic nonlinearity
\begin{equation}\label{numer_model}
\begin{split}
&du=vdt,\\
&dv=\left[u_{xx}+u_{yy}\right]dt-u^3dt+dW, \quad\quad \;\; \text{in} \;\; \OO \times (0,T],\\
&u(0)=0, \quad v(0)=1, \quad \;\; \text{in} \;\; \OO
\end{split}
\end{equation}
with $\OO=(0,1)\times(0,1),$ $T=1$ and homogenous Dirichlet boundary condition.

In the sequel, we choose the orthonormal basis $\{e_{k,l}\}_{k,l\in \N^+}$ and the corresponding eigenvalues $\{\eta_{k,l}\}_{k,l\in \N^+}$ of $\Q$ as
\begin{equation*}
e_{k,l}=2\sin(k\pi x)\sin(l\pi y),\quad\quad \eta_{k,l}=\frac{1}{k^3+l^3}.
\end{equation*}

\begin{figure}[h]
	\centering
	\subfigure[${\rm Energy-preserving}$]{
		\begin{minipage}{0.3\linewidth}
			\includegraphics[width=3.5cm,height=4cm]{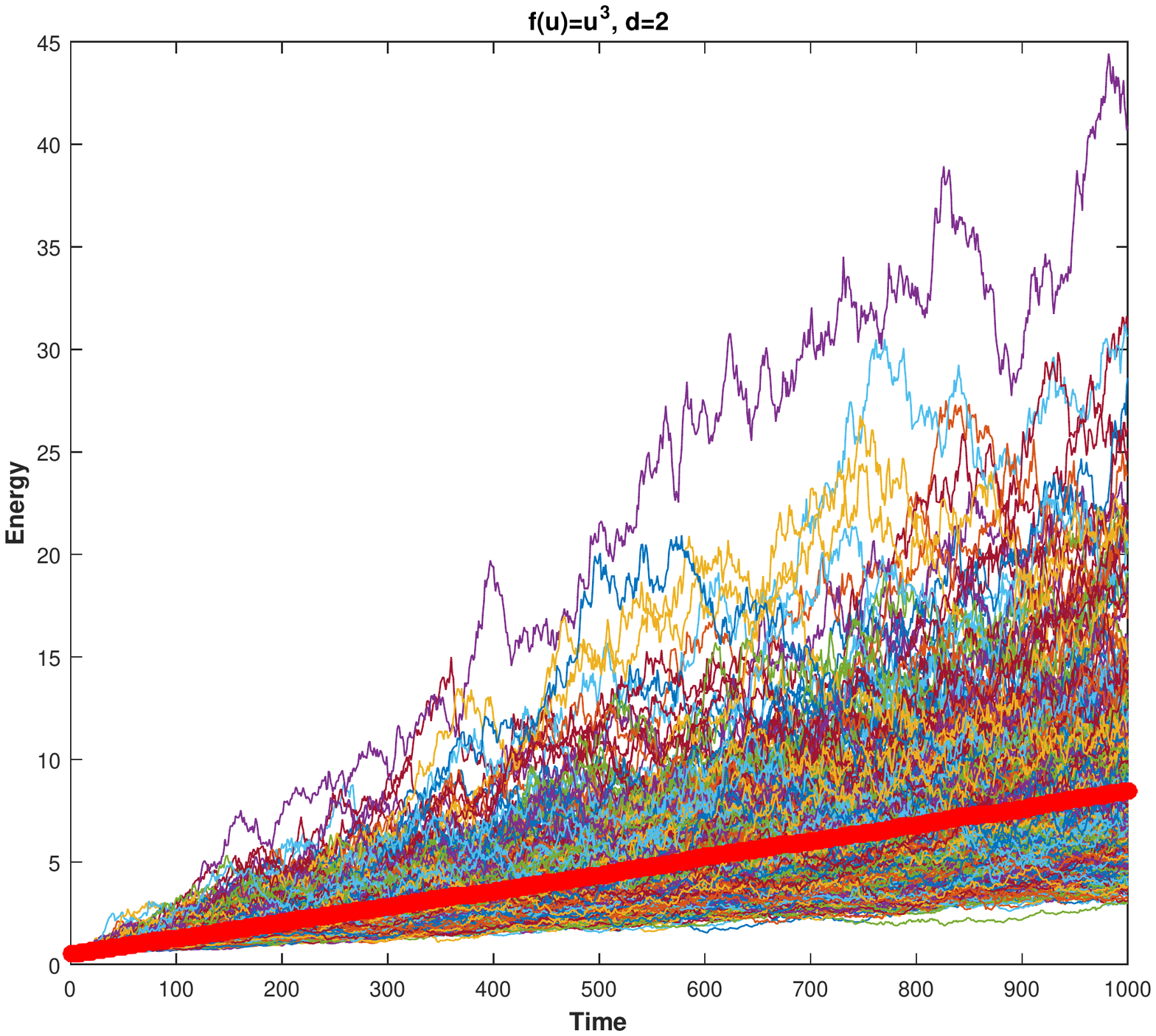}
	\end{minipage}}
	\subfigure[${\rm Spatial\; error}$]{
		\begin{minipage}{0.3\linewidth}
			\includegraphics[width=3.5cm,height=4cm]{{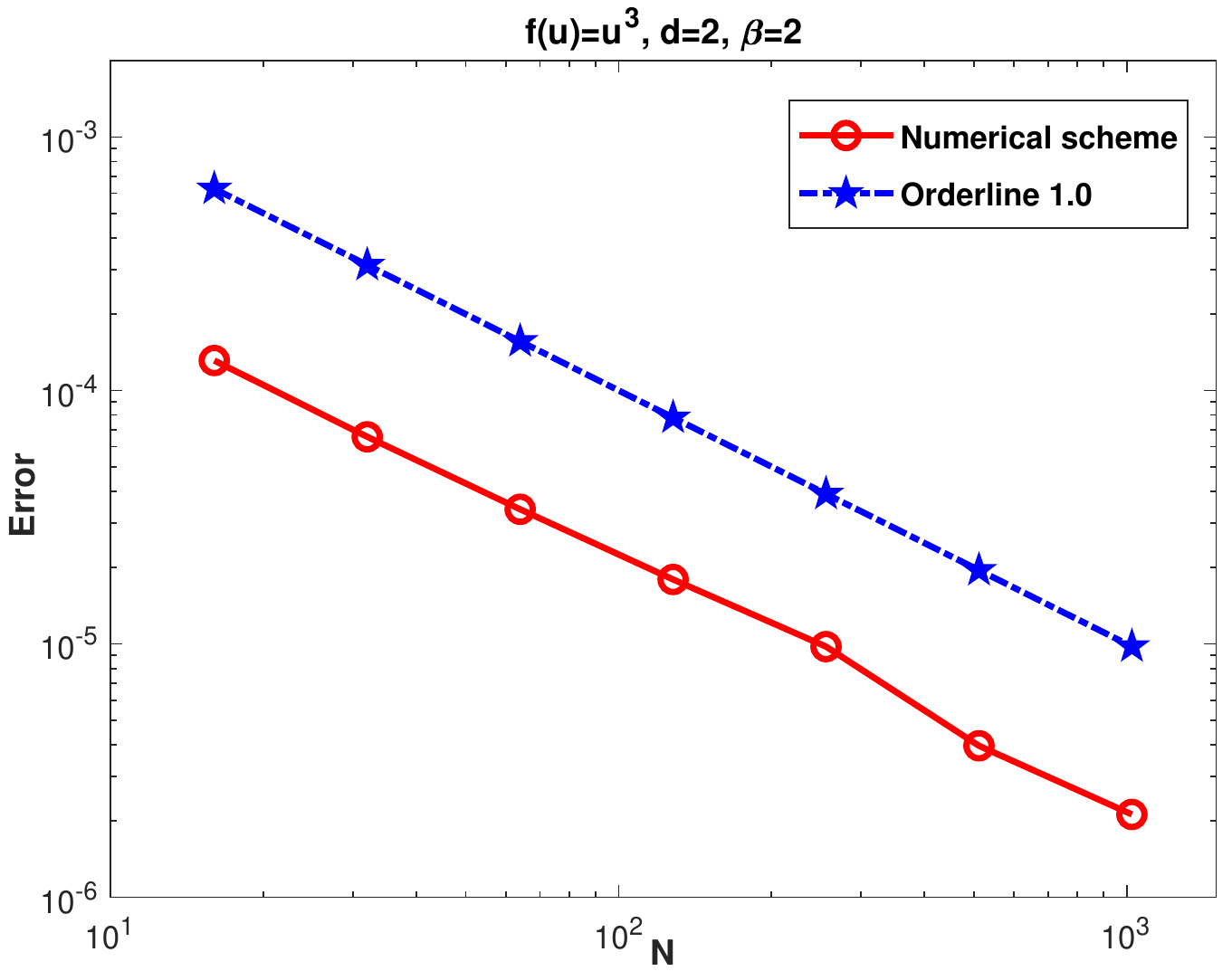}}
	\end{minipage}}
	\subfigure[${\rm Temporal\; error}$]{
		\begin{minipage}{0.3\linewidth}
			\includegraphics[width=3.5cm,height=4cm]{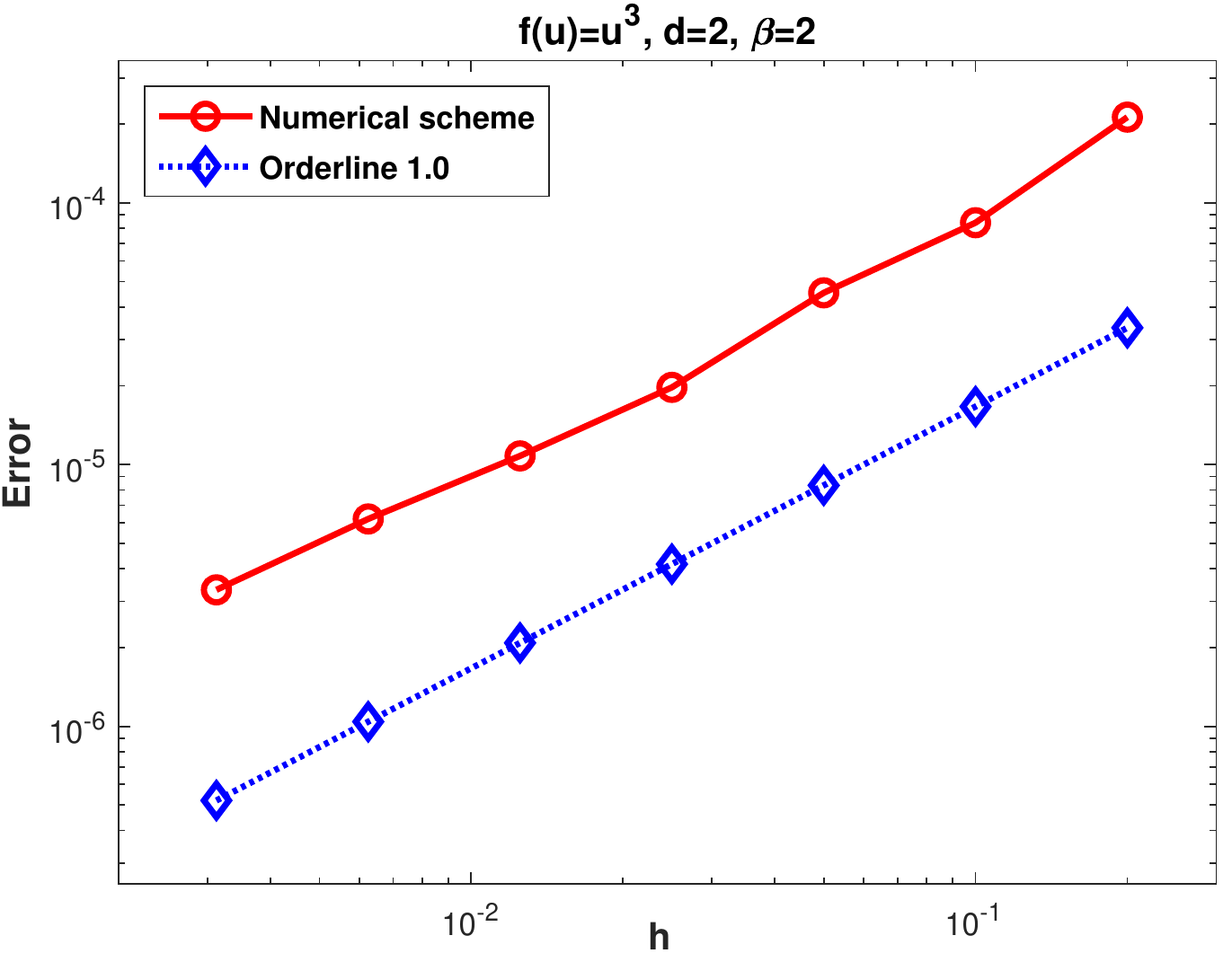}
	\end{minipage}}
	\caption{Energy-preserving property and strong convergence order in space and time for the full discrete numerical method \eqref{sche;sbes} applied to stochastic wave equation \eqref{numer_model}.}\label{temporal_order}
\end{figure}

Fig. \ref{temporal_order}(a) presents the evolution of discrete energy using the proposed methods in Section \ref{sec4}, where the red line represents the discrete averaged energy along 500 trajectories. 
From Theorem 4.1, we noted that, the averaged energy evolution law $\mathbb E(V_1(u_m^N,v_m^N))$ follows a linear evolution with growth rate $\frac{1}{2}\tr((P_N\Q^{\frac12})(P_N\Q^{\frac12})^*)$. It can be observed from Fig. \ref{temporal_order}(a) that the discrete averaged energy obeys nearly linear growth over 500 trajectories, which coincides with the theoretical analysis.

Now let us start with tests on the convergence rates. First of all, we consider the spatial convergence rate of the numerical method \eqref{sche;sbes}. 
%In order to verify the theoretical analysis results, we choose the parameter $\beta=3/2,2$ and $5/2$, respectively.
The middle figure in Fig. \ref{temporal_order} displays the spatial approximation errors $\sup\limits_{0\leq t\leq T}\|X_N(t)-X(t)\|_{L^p(\OOO; \H)}$ against $N$ on a log-log scale with $N=2^s,~s=4,5,\cdots,9$. 
It can be observed that the slope is closed to $1$ which is consistent with our previous theoretical result (see Proposition \ref{con-spa}) on the spatial convergence order. Note that for the temporal discretization we used here the proposed method \eqref{sche;sbes} at a sufficiently small time step-size $h=2^{-10}$. In addition, $N=2^{11}$ is used to simulate the exact solution.

%% label for entire figure
To investigate the strong convergence order in temporal direction of \eqref{sche;sbes} by using various step-sizes $h=2^{-r},~r=2,3,\cdots,7$, we now fix $N=100$. Again, the ``exact'' solution is approximated by the method \eqref{sche;sbes} with a very small time step-size $h=2^{-12}$.
The right figure in Fig. \ref{temporal_order} presents the strong approximation errors of the proposed method \eqref{sche;sbes} in temporal direction.
It can be seen that this numerical performance coincides with the theoretical assertion (see Proposition \ref{tm;2}).

\iffalse
\section{Conclusions}\label{Sec7}

In this paper, in order to simulate the exact solution of the stochastic cubic wave equation more accurately and characterize both energy evolution law and exponential integrability property numerically, we propose a full discrete numerical method based on spectral Galerkin method in spatial direction and splitting averaged vector fields method in temporal direction. Then
the numerical method is shown to strongly converge to the exact solution by using the structure-preserving properties of numerical solution.
%In this paper, we introduce the concept of asymptotical preservation of numerical methods for the LDPs concerning the original system.We show the superiority of stochastic symplectic methods for solving the linear stochastic oscillator. Theoretical results indicate that  stochastic symplectic methods can  asymptotically preserve the LDPs for $\{A_T\}_{T>0}$ and $\{B_T\}_{T>0}$ of the linear stochastic oscillator. A variety of numerical methods are studied to verify the theoretical analyses.
However, there are still many problems of interest which remain open. For instance, the optimal weak convergence rate of numerical methods approximating stochastic wave equations with non-globally Lipschitz coefficients is still unsolved problem and will be studied further. %These problems are considerably meaningful. It is hoped to show the superiority of stochastic symplectic methods when applied to  general stochastic Hamiltonian  systems by studying Problem $(1)$. Problem $(2)$ is the extension of  Problem $(1)$ in infinite dimensional spaces.
%Problem $(3)$ is devoted to numerically approximating the large deviations rate functions of original systems. 

\fi

\vspace{1em}

\noindent{\bf Acknowledgment.} The authors are very grateful to the referees for their valuable comments and suggestions on the improvement of the present paper.

\section{Appendix}
{\bf{Existence and uniqueness of numerical solution of \eqref{sche;sbes}}.} %\label{wel-pos}
To prove the well-posedness of the numerical scheme, it only needs to show that the existence and uniqueness of numerical solution of the scheme at the step $m+1$.  
Assume that  $d\le 2,$  $\|\Q^\frac 12\|_{\mathcal L_2(L^2)}<\infty,$ $h_0>0$ is sufficient small, temporal step size $h<h_0$, and that $(u_m^N,v_m^N)^\top \in\H^1$ is $\mathcal F_{t_m}$-measurable and $\E\Big[\exp({V_1(u_m^N,v_m^N)}{e^{-\alpha t_m}})\Big]\le C$ for some $\alpha(Q,T,u_0,v_0,d) \ge1$ and $C(Q,T,u_0,v_0,d)>0$. 
The proof is similar to that of \cite[Lemma 8.1]{MSH02} by making use of the structure of the drift coefficient $f$ and the fact that \eqref{mod;swe} is a separable system.

Since the last equation in \eqref{sche;sbes} has an explicit analytical solution, it suffices to show the existence of a unique solution for the first two equations in \eqref{sche;sbes}, that is,
\begin{align}\label{sche;sbes;det}
u^N_{m+1}=&u^N_{m}+h\bar v^N_{m+\frac 12},\\\nonumber
\bar v^N_{m+1}=&v^N_m+h\Lambda_N u^N_{m+\frac 12}-hP_N\left(\int_0^1f(u^N_m+\theta(u^N_{m+1}-u^N_m))d\theta \right).
\end{align}
Finding the solution of the above system is equivalent to the solvability of finding $\widetilde v$ such that
\begin{align*}
\widetilde v-v^N_{m}-\frac 12 h\Lambda_N (u^N_{m}+\frac 12h \widetilde v)+\frac 12h P_N\left(\int_0^1f(u^N_m+\theta h \widetilde v)d\theta \right)=0.
\end{align*}
Choosing $\widetilde F(x,y)>0$ as the anti-derivative of  $\frac 12 h\int_0^1f(x+\theta h y)d\theta$ for fixed $x$, then it suffices to find $v^*$ which minimizes
\begin{align*}
\frac 12\|\widetilde v-v^N_{m}\|_{L^2}^2+ \frac 12\|u^N_{m}+\frac 12h \widetilde v\|_{\mathbb {\dot{\mathbb H}}^1}^2
+\int_{\mathcal O}\widetilde F(u^N_m, \widetilde v) dx. 
\end{align*}
The existence of the minimizer is guaranteed by the fact that $f(\cdot)$ is smooth and its anti-derivative is bounded below. Furthermore, the uniqueness can be obtained by using the fact $f(u)=c_3u^3+\cdots+c_1u+c_0$ is a polynomial % with odd degree $\rho\leq 3$ and 
with $c_3>0$. It implies that there exists $C(c_3,c_2,c_1,c_0)>0$ such that 
$(f(x)-f(y))\cdot(x-y)\ge -C\|x-y\|^2.$

Assume that we have two different numerical solutions $(\widetilde u_1,\widetilde v_1)$ and $(\widetilde u_2, \widetilde v_2)$ in $P_N(\mathbb H)$ of \eqref{sche;sbes} with the same initial condition $(u_m^N,v_m^N)$. 
Then 
\begin{align*}
\widetilde v_i-\frac 12h\Lambda_N(u_m^N+\frac h2\widetilde v_i)+ P_N\left(\frac{\partial }{\partial y} \widetilde F(u^N_m, \widetilde v_i)\right)=v_m^N,\; i=1,2.
\end{align*}
Using $c_3>0$ and the Poincar\'e inequality, we have that 
\begin{align*}
0\ge (1+\lambda_1\frac {h^2}4-C' h^2)\|\widetilde v_1-\widetilde v_2\|_{L^2}^2
\end{align*}
for some constant $C'(c_3,c_2,c_1,c_0).$ By  taking $h_0$ small enough such that $1+ \lambda_1 \frac {h_0^2}4> C' h_0^2$, the uniqueness of the numerical solution of \eqref{sche;sbes} is obtained.

{\noindent{\bf {Strong approximate error of the fixed point iteration.}}} 
{\color{black}Assume that $(u_m^N,v_m^N)$ is the numerical solution at time step $m.$
The first fixed point iteration reads 
\begin{equation}
\begin{split}
\label{iteration1}
&u^{N,k+1}=u^N_{m}+h\bar v^{N,k+\frac 12},\\
&\bar v^{N,k+1}=v^N_m+h\Lambda_N u^{N,k+\frac 12}-hP_N\Big(\frac 12((u^{N,k})^2+(u^{N}_m)^2)-1\Big)u^{N,k+\frac 12},\\
&u^{N,0}=u^N_{m},\quad v^{N,0}=v^N_{m},
\end{split}
\end{equation}
where $u^{N,k+\frac 12}=\frac 12(u^{N,k+1}+u^{N}_{m}),$ $\bar v^{N,k+\frac 12}=\frac 12(\bar v^{N,k+1}+v^{N}_{m}).$ 
The second one takes the form of
\begin{equation}
\begin{split}
\label{iteration2}
u^{N,k+1}=&u^N_{m}+h\bar v^{N,k+\frac 12},\\
\bar v^{N,k+1}=&v^N_m+h\Lambda_N u^{N,k+\frac 12}\\
&-h \mathbb I_{\{\|u^{N,k}\|_{\dot \H^1}+\|u^N_m\|_{\dot \H^1} \le \frac 1\epsilon\}} P_N\Big(\frac 12\Big((u^{N,k})^2+(u^{N}_m)^2\Big)-1\Big)u^{N, k+\frac 12}
\end{split}
\end{equation}
with $u^{N,0}=u^N_{m}, v^{N,0}=v^N_{m}$ and a sufficiently small parameter $\epsilon >0$. 
Under the assumption on the well-posedness of numerical solution  in Appendix, we obtain that  \eqref{iteration1} satisfies for small $h\in (0,h_0)$ with $h^2\lambda_N^{\frac d 3}=O(1),$
\begin{align}\label{err-est}
&\E\Big[\|u^{N,k}-u^*\|_{L^2}^2\Big]+\E\Big[\|v^{N,k}-v^*\|_{\dot {\mathbb H}^{-1}}^2\Big]\nonumber\\
\le& Ch^2(1+h^2\lambda_N^{\frac {d}3})^{ 2} \left((\frac 12)^{k+1}+\exp(-\eta h^{-1}\min(h^{-2}\lambda_N^{-\frac d3},1))\right),
\end{align}
and \eqref{iteration2} satisfies for small $h\in (0,h_0)$ with $\frac h{\epsilon^4}= O(1),$
\begin{align}\label{err-est1}
&\E\Big[\|u^*-u^{N,k+1}\|_{L^2}^2\Big]+\E\Big[\|v^{N,k}-v^*\|_{\dot {\mathbb H}^{-1}}^2\Big]\nonumber\\%&\le \max\Big(\E\Big[\|u^*-u^N_m\|_{L^2}^2\Big],8C'h^2\Big)\Big((\frac 12)^{k+1}+\exp(-\eta \frac 1{\epsilon^2})\Big)\\
\le& C\max(h^2,\epsilon^8)\Big((\frac 12)^{k+1}+\exp(-\eta \frac 1{\epsilon^2})\Big),
\end{align} 
where $(u^*,v^*)$ is the fixed point of \eqref{sche;sbes;det}, i.e., $(u^*,v^*)=(u_{m+1}^N,\bar v_{m+1}^N)$, and $\eta,C>0$ depend on $T,Q,d,u^N_m,v^N_m.$ Here $h_0$ is the upper bound of the step-size such that the fixed point of \eqref{sche;sbes;det} exists uniquely.

{\bf{$\bullet$ Analysis of fixed point iterations of \eqref{iteration1}}}

Since the stochastic subsystem in \eqref{sche;sbes} {\color{black}possesses} a unique explicit solution, it suffices to construct iterations to approximate the fixed point of \eqref{sche;sbes;det}.
Thanks to the polynomial assumption on $f$, we have that 
\begin{align*}
\int_0^1f(u^N_m+\theta(u^N_{m+1}-u^N_m))d\theta
&=\frac 14 \frac {(u^N_{m+1})^4-(u^N_{m})^4}{u^N_{m+1}-u^N_m}-\frac 12 \frac {(u^N_{m+1})^2-(u^N_{m})^2}{u^N_{m+1}-u^N_m}\\
&=\Big(\frac 12((u^N_{m+1})^2+(u^N_m)^2)-1\Big) u^{N}_{m+\frac 12},
\end{align*}
which yields 
\begin{equation}
\begin{split}
\label{equ-fix1}
\frac{u^{N}_m+u^*}2=&u^N_{m}+\frac 12hv^N_m +\frac 14 h^2\Lambda_N \left(\frac{u^{N}_m+u^*}2\right)\\
&-\frac 14h^2P_N\Big(\frac 12((u^*)^2+(u^{N}_m)^2)-1\Big)\left(\frac{u^{N}_m+u^*}2\right).
\end{split}
\end{equation}
Moreover, we rewrite \eqref{iteration1} as
\begin{equation}
\begin{split}
\label{equ-fix}
u^{N,k+\frac 12}=&u^N_{m}+\frac 12hv^N_m +\frac 14 h^2\Lambda_N u^{N,k+\frac 12}\\
&-\frac 14h^2P_N\Big(\frac 12((u^{N,k})^2+(u^{N}_m)^2)-1\Big)u^{N,k+\frac 12}.
\end{split}
\end{equation}
To prove \eqref{err-est}, we estimate  
	\begin{align*}
	\E\Big[\|u^{N,k+\frac 12}-\frac 12(u^*+u^N_m)\|_{L^2}^2\Big]+\E\Big[\|{\bar v}^{N,k+\frac 12}-\frac 12(v^*+v^N_m)\|_{\dot \H^{-1}}^2\Big].
	\end{align*} 
	Let us take  $\E\Big[\|u^{N,k+\frac 12}-\frac 12(u^*+u^N_m)\|_{L^2}^2\Big]$ as an example to illustrate the detailed steps and estimates, since the estimate of $\E\Big[\|{\bar v}^{N,k+\frac 12}-\frac 12(v^*+v^N_m)\|_{\dot \H^{-1}}^2\Big]$ is analogous.

First, we claim that the a priori estimate of $u^{N,k+\frac 12}$ could be bounded by the estimates of $u^N_m$ and $v^N_m$.   
To this end, we take $L^2$-inner product with $u^{N,k+\frac 12}$ on \eqref{equ-fix} and use  the integration by part formula and Young's inequality, then
	\begin{align*}
	\|u^{N,k+\frac 12}\|_{L^2}^2\le&  \<u^N_{m}+\frac 12hv^N_m,u^{N,k+\frac 12}\>_{L^2} +\frac 14 h^2\<\Lambda_N u^{N,k+\frac 12},u^{N,k+\frac 12}\>_{L^2}\\\nonumber
	&-\frac 14h^2\<\Big(\frac 12((u^{N,k})^2+(u^{N}_m)^2)-1\Big)u^{N,k+\frac 12},u^{N,k+\frac 12}\>_{L^2}\\
	\le& \frac 12\|u^N_{m}+\frac 12hv^N_m\|_{L^2}^2+\|u^{N,k+\frac 12}\|_{L^2}^2(\frac 12+ \frac 14 h^2 )-\frac 14h^2\|\nabla u^{N,k+\frac 12}\|_{L^2}^2.
	\end{align*}
	
	Then since $\lambda_1 \ge 1,$ by the Poincar\'e inequality, we have that 
	\begin{align*}
	\frac 12\|u^{N,k+\frac 12}\|_{L^2}^2&\le \frac 12 \|u^N_{m}+\frac 12 hv^N_m\|_{L^2}^2.
	\end{align*}
	Similarly, one could get that for $q\in \mathbb N^+,$
	\begin{align}\label{L6-est}
	\|u^{N,k+\frac 12}\|_{L^{2q}}^{2q}&\le  C_q \|u^N_{m}+\frac 12 hv^N_m\|_{L^{2q}}^{2q}.
	\end{align}
	By subtracting the corresponding equation {\eqref{equ-fix1}} of $\frac{u_m^N+u^*}2$ from \eqref{equ-fix}, we achieve that 
	\begin{align*}
	u^{N,k+\frac 12}-\frac  {u_m^N+u^*}2=& \frac 14 h^2\Lambda_N( u^{N,k+\frac 12}-\frac  {u_m^N+u^*}2)\nonumber\\\nonumber 
	&-\frac 14h^2P_N\Big(\frac 12((u^{N,k})^2+(u^{N}_m)^2)-1\Big)(u^{N,k+\frac 12}-\frac  {u_m^N+u^*}2)\\
	&-\frac 14h^2P_N\Big(\frac 12(u^{N,k})^2-\frac 12(u^*)^2\Big)\frac  {u_m^N+u^*}2.
	\end{align*}
	Then multiplying $u^{N,k+\frac 12}-\frac  {u_m^N+u^*}2$ on both sides of the above equation, using integration by parts, the Sobolev embedding theorem $L^{\frac 65}\hookrightarrow\dot{\mathbb H}^{-\frac d3}$ and Young's inequality, we have that  for $k\ge 1,$
	\begin{align*}%\label{l2-est1}
	&\|u^{N,k+\frac 12}-\frac  {u_m^N+u^*}2\|_{L^2}^2\nonumber\\	\le&-\frac 14 h^2  \left\|(-\Lambda)^{\frac 12}\left(u^{N,k+\frac 12}-\frac  {u_m^N+u^*}2\right)\right\|_{L^2}^2 +\frac 14 h^2  \|u^{N,k+\frac 12}-\frac  {u_m^N+u^*}2\|_{L^2}^2\nonumber\\\nonumber
	&-\frac 14h^2P_N\left\<(-\Lambda)^{-\frac 12}	\left(\Big(\frac 12(u^{N,k})^2-\frac 12(u^*)^2\Big)\frac{u_m^N+u^*}2\right),(-\Lambda)^{\frac 12}\big(u^{N,k+\frac 12}-\frac  {u_m^N+u^*}2\big)	\right\>_{L^2}\\
	\le& \frac 14 h^2  \|u^{N,k+\frac 12}-\frac  {u_m^N+u^*}2\|_{L^2}^2\\\nonumber
	&+ h^2 C\Big(\|u^{N,k}\|_{L^6}^4+\|u_m^N\|_{L^6}^4+\|u^{*}\|_{L^6}^4\Big) \left\|u^{N,k-\frac 12}-\frac  {u_m^N+u^*}2\right\|_{L^2}^2,
	\end{align*} 
where $C$ depends on the coefficient of  Sobolev embedding inequality $L^{\frac 65}\hookrightarrow\dot{\mathbb H}^{-\frac d3}$.  	
This implies that 
	\begin{align}
	&\|u^{N,k+1}-u^*\|_{L^2}^2\nonumber\\
	\le& \frac {Ch^2}{1-\frac 14 h^2}\Big(\|u^{N,k}\|_{L^6}^4+\|u_m^N\|_{L^6}^4+\|u^{*}\|_{L^6}^4\Big) \|u^{N,k}-u^*\|_{L^2}^2\label{l2-est0}\\\nonumber
	\le& (\frac {Ch^2}{1-\frac 14 h^2})^{k+1}\prod_{j=0}^{k}\Big(\|u^{N,j}\|_{L^6}^4+\|u_m^N\|_{L^6}^4+\|u^{*}\|_{L^6}^4\Big) \|u^{N,0}-u^*\|_{L^2}^2.
	\end{align}
	One could use the Sobolev embedding theorem {$\dot\H^{\frac d3}\hookrightarrow  L^6$}, the inverse inequality in $U^N$ and \eqref{L6-est}, and get for small $h\in (0,1)$,
	\begin{align*}
	&\|u^{N,k+1}-u^*\|_{L^2}^2\\
	\le& (\frac {C' h^2}{1-\frac 14 h^2})^{k+1} \prod_{j=0}^{k}\Big(\lambda_N^{\frac {2d}3}h^4\|v^{N}_m\|_{L^2}^4+\|u^N_m\|_{L^6}^4+\|u^{*}\|_{L^6}^4\Big) \|u^{N,0}-u^*\|_{L^2}^2,
	\end{align*}
	where $C'=C\max(1,C_{sob}^4),$ where $C_{sob}$ is the coefficient of the  Sobolev embedding $\dot\H^{\frac d3}\hookrightarrow  L^6.$
	Furthermore, on the subset $$\Omega_h:=\Big\{\omega\in \Omega \Big| \lambda_N^{\frac {2d}3}h^4\|v^{N}_m\|_{L^2}^4+\|u^N_m\|_{L^6}^4+\|u^{*}\|_{L^6}^4 \le \frac {1-\frac 14 h^2}{2 C' h^2}\Big\},$$  it holds that for 
	\begin{align*}
	\E\Big[\mathbb I_{\Omega_h}\|u^{N,k+1}-u^*\|_{L^2}^2\Big]&\le \left(\frac 12\right)^{k+1}\E\Big[\|u^{N,0}-u^*\|_{L^2}^2\Big]\\
	&\le \left(\frac 12\right)^{k+1} h^2 \E\Big[\|\frac {v_m^N+v^*}2 \|_{L^2}^2\Big]\\
	&\le C(Q,T,u_m^N,v_{m}^N,d) h^2\left(\frac 12\right)^{k+1}.
	\end{align*} 
	Notice that the fixed point of \eqref{sche;sbes;det} in the {\color{black}revised paper} possesses the energy preserving property $V_1(u^*,v^*)=V_1(u^N_m,v^N_m).$
	The first equality of \eqref{l2-est0}, together with \eqref{L6-est}, the moment boundedness of $V_1(u^N_m,v^N_m)$ and $V_1(u^*,v^*)$, and the inverse Sobolev inequality, yield that for $p\ge 1$,
	\begin{align}\label{det-est}
	&\quad\E\Big[\|u^{N,k+1}-u^*\|_{L^2}^{2p}\Big]\\\nonumber
	&\le (\frac {Ch^2}{1-\frac 14 h^2})^p  \Big(\E\Big[\Big(\|u^{N,k}\|_{L^6}^{8p}+\|u_m^N\|_{L^6}^{8p}+\|u^{*}\|_{L^6}^{8p}\Big)\Big]\Big)^{\frac 12} \Big(\E\Big[\|u^{N,k-\frac 12}-\frac  {u_m^N+u^*}2\|_{L^2}^{4p}\Big]\Big)^{\frac 12}\\\nonumber
	&\le (\frac {Ch^2}{1-\frac 14 h^2})^p   \Big(1+\Big(\E\Big[\|u^{N}_m\|_{L^6}^{8p}\Big]\Big)^{\frac 12}+(h\lambda_N^{\frac {d}6})^{4p}\Big(\E\Big[\|v^{N}_m\|_{L^2}^{8p}\Big]\Big)^{\frac 12}\Big)\\\nonumber 
	&\quad\times \Big(\E\Big[\|u^{N,k}\|_{L^2}^{4p}+\|u_m^N\|_{L^2}^{4p}+\|u^*\|_{L^2}^{4p}\Big]\Big)^{\frac 12} \\\nonumber
	&\le C(T,Q,p,d,u^N_m,v^N_m)(h(1+h^2\lambda_N^{\frac {d}3}))^{2p},
	\end{align}
where $C$ differs from line to line up to a constant depending on $p,$ and $C(T,Q,p,d,u^N_m,v^N_m)<\infty$ due to $\eqref{L6-est}$ and the energy preserving property of the fixed point.
	On the set $\Omega_h^c$, by \eqref{det-est} and the Chebshev inequality,  we have that there exists $C>0$ depending on $T,Q,u^N_m,v^N_m,d$ such that
	\begin{align*}
	&\E\Big[\mathbb I_{\Omega_h^c}\|u^{N,k+1}-u^*\|_{L^2}^2\Big]\\
	\le& \Big(\mathbb P(\lambda_N^{\frac {2d}3}h^4\|v^{N}_m\|_{L^2}^4+\|u^N_m\|_{L^6}^4+\|u^{*}\|_{L^6}^4 \ge \frac {1-\frac 14 h^2}{2 C' h^2} )\Big)^{\frac 12}\\
	&\quad\times \sqrt{\E\Big[\|u^{N,k+1}-u^*\|_{L^2}^4\Big]}\\
	\le& Ch^2(1+h^2\lambda_N^{\frac {d}3})^{2} \Big(\mathbb P(\lambda_N^{\frac {2d}3}h^4\|v^{N}_m\|_{L^2}^4+\|u^N_m\|_{L^6}^4+\|u^{*}\|_{L^6}^4 \ge \frac {1-\frac 14 h^2}{2 C' h^2} )\Big)^{\frac 12}.
	\end{align*}

Now applying the Gaussian type tail estimates for $(u_m^N,v_m^N)$ and $(u^*,v^*)$ which possess some exponential integrability (see e.g. the proof of [13, Corollary 4.1]), and using the Sobolev embedding theorem, the assumption $$\E\Big[\exp({V_1(u_m^N,v_m^N)}{e^{-\alpha t_m}})\Big]< \infty$$ in Appendix, as well as the definition of $V_1$,
we get that
 \begin{align*}
 &\mathbb P\Big(\lambda_N^{\frac {2d}3}h^4\|v^{N}_m\|_{L^2}^4+\|u^N_m\|_{L^6}^4+\|u^{*}\|_{L^6}^4 \ge \frac {1-\frac 14 h^2}{2 C' h^2}\Big)\\
 \le& \mathbb P\Big(\|v^{N}_m\|_{L^2}\ge h^{-1}\lambda_N^{-\frac d6}(\frac 13 \frac {1-\frac 14 h^2}{2 C' h^2})^{\frac 14}\Big)+
  \mathbb P\Big( \|u^N_m\|_{\dot{\mathbb H}^{1}} \ge (\frac {C_{sob}^4}3 \frac {1-\frac 14 h^2}{2 C' h^2})^{\frac 14}\Big)\\
&+ \mathbb P\Big( \|u^{*}\|_{\dot{\mathbb H}^{1}} \ge (\frac {C_{sob}^4}3 \frac {1-\frac 14 h^2}{2 C' h^2})^{\frac 14}\Big)\\
\le& C_{11} \exp(-\eta_1 (h^{-2}\lambda_N^{-\frac d3}(\frac 13 \frac {1-\frac 14 h^2}{2 C' h^2})^{\frac 12}))+C_{12}\exp(-\eta_2 (\frac {C_{sob}^4}3 \frac {1-\frac 14 h^2}{2 C' h^2})^{\frac 12})\\
&+C_{13}\exp(-\eta_3 (\frac {C_{sob}^4}3 \frac {1-\frac 14 h^2}{2 C' h^2})^{\frac 12}),
\end{align*}
where $C_{sob}$ is the coefficient of Sobolev embedding inequality from $\dot{\mathbb H}^{1}$ to $L^6,$
\begin{align*}
C_{11}&=\E[\exp(\frac 12\|v_m^N\|_{L^2}^2e^{-\alpha {t_m}})], C_{12}=C_{13}=\E[\exp(\frac 12\|u_m^N\|_{{\dot{\mathbb H}^1}}^2e^{-\alpha {t_m}})], \\
\eta_1&=\eta_2=\eta_3=\frac 12e^{-\alpha {t_m}}. 
\end{align*}
Therefore, we  have  that 
\begin{align*}
&\quad\E\Big[\mathbb I_{\Omega_h^c}\|u^{N,k+1}-u^*\|_{L^2}^2\Big]\\
&\le Ch^2(1+h^2\lambda_N^{\frac {d}3})^{2} \Big(\mathbb P(\lambda_N^{\frac {2d}3}h^4\|v^{N}_m\|_{L^2}^4+\|u^N_m\|_{L^6}^4+\|u^{*}\|_{L^6}^4 \ge \frac {1-\frac 14 h^2}{2 C' h^2} )\Big)^{\frac 12}\\
&\le Ch^2(1+h^2\lambda_N^{\frac {d}3})^{2} (C_{11}^{\frac 12}+C_{12}^{\frac 12}+C_{13}^{\frac 12})\times \\
&\max(\exp(-\eta_1 (h^{-2}\lambda_N^{-\frac d3}(\frac 13 \frac {1-\frac 14 h^2}{2 C' h^2})^{\frac 12})),\exp(-\eta_3 (\frac {C_{sob}^4}3 \frac {1-\frac 14 h^2}{2 C' h^2})^{\frac 12})).
\end{align*}

Choosing $h\in (0,1)$ small enough,  we conclude that there exist $\eta,C>0$ depending on $Q,T,u_m^N,v_m^N,d$ such that
\begin{align*}
&\E\Big[\mathbb I_{\Omega_h^c}\|u^{N,k+1}-u^*\|_{L^2}^2\Big]\\
\le& Ch^2(1+h^2\lambda_N^{\frac {d}3})^{2}\exp(-\eta h^{-1}\min(h^{-2}\lambda_N^{-\frac d3},1)).
\end{align*}

	Combining the above two estimates on $\Omega_h$ and $\Omega_h^c$, we complete the proof.

{\bf{$\bullet$Analysis of fixed point iterations of \eqref{iteration2}}}
	%One could use our approach to consider other type of truncated fixed point iteration with truncated nonlinearity term, like $$P_N\mathbb I_{\{\|u^{N,k}\|_{\dot \H^1}+\|u^N_m\|_{\dot \H^1} \le |\log(h)|\}}\Big(\frac 12(u^{N,k})^2+(u^{N}_m)^2)-1\Big)u^{N,k+\frac 12}.$$
	\iffalse
	Taking $L^2$-inner product with $u^{N,k+\frac 12}$ on the first equation and substituting the second equation, we get that 
	\begin{align*}
	\frac 12\|u^{N,k+\frac 12}\|_{L^2}^2&\le \frac 12\|u^N_{m}+\frac h2 v^N_m\|_{L^2}^2-\frac {h^2}4 \|\nabla u^{N,k+\frac 12}\|_{L^2}^2+\frac {h^2}4\|u^{N,k+\frac 12}\|_{L^2}^2 
	\end{align*} 
	\fi
	
	Using the mild form of \eqref{iteration2} and the unitary property of $\B^{-1}(h)\AA(h)$ in Lemma 4.1 and the expression of $\B^{-1}(h)$, we have that  for $k\in \mathbb  N^+$,
	\begin{align*}
	&\sqrt{\|u^{N,k+1}\|_{\dot \H^1}^2+\|\bar v^{N,k+1}\|_{L^2}^2}\\\nonumber
	\le& \sqrt{\|u^N_{m}\|_{\dot \H^1}^2+\| v^{N}_m\|_{L^2}^2}+ hC\Big(\|u^{N,k+\frac 12}\|_{L^2}+\|u^{N,k+\frac 12}\|_{L^6}(\|u^{N,k}\|_{L^6}^2+\|u^N_m\|_{L^6}^2)\Big)\\\nonumber
	&\times \mathbb I_{\{\|u^{N,k}\|_{\dot \H^1}+\|u^N_m\|_{\dot \H^1} \le \frac 1\epsilon \}} \\\nonumber
	\le&  \sqrt{\|u^N_{m}\|_{\dot \H^1}^2+\| v^{N}_m\|_{L^2}^2}+
	hC(1+\frac 1{\epsilon^2})\sqrt{\|u^{N,k+1}\|_{\dot \H^1}^2+\|\bar v^{N,k+1}\|_{L^2}^2}+
	Ch(\frac 1 \epsilon+\frac 1 {\epsilon^3}).
	\end{align*}
	\iffalse
	Subtracting the above iteration  of $\bar v^{N,k+1}$  from  the equation of $(u^*,v^*)$ in the mild form, using Holder's inequality and Sobolev embedding theorem, we get that 
	\begin{align*}
	&\sqrt{\|u^*-u^{N,k+1}\|^2+\|v^*-v^{N,k+1}\|_{\dot \H^{-1}}^2}\\ &\le h \mathbb I_{\{\|u^{N,k}\|_{\dot \H^1}+\|u^N_m\|_{\dot \H^1} > \frac 1\epsilon \}}\frac 12 \Big\|\Big((u^{*})^2+(u^{N}_m)^2\Big)-1\Big)\frac {u^*+u^N_m}2\Big\|_{\dot \H^{-1}}\\
	&+h \mathbb I_{\{\|u^{N,k}\|_{\dot \H^1}+\|u^N_m\|_{\dot \H^1} \le  \frac 1\epsilon \}} \frac 14 \Big\|(u^*)^2u^*-(u^{N,k})^2u^{N,k+1}\Big\|_{\dot \H^{-1}}\\
	&\le Ch \mathbb I_{\{\|u^{N,k}\|_{\dot \H^1}+\|u^N_m\|_{\dot \H^1} > \frac 1\epsilon \}} \Big(\|u^{*}\|_{L^6}^3+\|u^{N}_m\|_{L^6}^3+\|u^*\|_{L^2}+\|u^N_m\|_{L^2}\Big)\\
	&+Ch \mathbb I_{\{\|u^{N,k}\|_{\dot \H^1}+\|u^N_m\|_{\dot \H^1} \le  \frac 1\epsilon \}} \frac 14 \Big((\|u^*\|_{L^6}^2 +\|u^*\|_{L^6}\|u^{N,k}\|_{L^6})\|u^*-u^{N,k}\|\\
	&+\|u^{N,k}\|^2_{L^6}\|u^*-u^{N,k+1}\|\Big).
	\end{align*}
	\fi
	This implies that if $hC(1+\frac 1{\epsilon^2})\leq c<1,$ then there exists $C'>0$ such that 
	\begin{align}\label{pri-est-fix}
	&\sqrt{\|u^{N,k+1}\|_{\dot \H^1}^2+\|\bar v^{N,k+1}\|_{L^2}^2}\\\leq&
	(1+C'h(1+\frac 1{\epsilon^2}))\sqrt{\|u^N_{m}\|_{\dot \H^1}^2+\| v^{N}_m\|_{L^2}^2}+(1+C'h(1+\frac 1{\epsilon^2}))Ch(\frac 1 \epsilon+\frac 1 {\epsilon^3}).\nonumber
	\end{align}
	Let us consider the difference between $\frac {u^*+u^N_m}2$ and $u^{N,k+\frac 12}$. The estimate with respect to $v^*$ and $\bar v^{N,k+1}$ is similar.
	By substituting the equation of $\bar v^{N,k+1}$ into the equation of $ u^{N,k+1}$, we have that 
	\begin{align*}
	u^{N,k+\frac 12}=&u^N_m+\frac h2v^N_m+\frac {h^2} 4 \Lambda_N u^{N,k+\frac 12}-\frac {h^2}4 P_N\Big(\frac 12\Big((u^{N,k})^2+(u^{N}_m)^2\Big)-1\Big)u^{N, k+\frac 12}\\
	&\times  \mathbb I_{\{\|u^{N,k}\|_{\dot \H^1}+\|u^N_m\|_{\dot \H^1} \le \frac 1\epsilon\}}.
	\end{align*}
	By subtracting the above equation from the equation of $\frac {u^*+u^N_m}2$,  taking $L^2$-inner product with $\frac {u^*+u^N_m}2-u^{N,k+\frac 12}$ and using Young's inequality, we obtain 
	\begin{align*}
	&\|\frac {u^*+u^N_m}2-u^{N,k+\frac 12}\|_{L^2}^2\\
	\le& -\frac {h^2} 8 \|\nabla(\frac {u^*+u^N_m}2- u^{N,k+\frac 12})\|_{L^2}^2+\frac {h^2}4\|\frac {u^*+u^N_m}2-u^{N,k+\frac 12}\|_{L^2}^2\\
	&+\frac {h^2}8 \|(u^{N,k})^2u^{N,k+\frac 12}-(u^*)^2\frac {u^*+u^N_m}2\|_{\dot \H^{-1}}^2 \mathbb I_{\{\|u^{N,k}\|_{\dot \H^1}+\|u^N_m\|_{\dot \H^1} \le \frac 1\epsilon\}} \\
	&-\frac {h^2}4\<(\frac 12((u^*)^2+(u^N_m)^2)-1)\frac {u^*+u^N_m}2,\frac {u^*+u^N_m}2-u^{N,k+\frac 12}\>_{L^2}\mathbb I_{\{\|u^{N,k}\|_{\dot \H^1}+\|u^N_m\|_{\dot \H^1} > \frac 1\epsilon\}}\\
	\le&\frac {h^2}4 \|\frac {u^*+u^N_m}2-u^{N,k+\frac 12}\|_{L^2}^2\\
	&+\frac {h^2}8 \|(u^{N,k})^2u^{N,k+\frac 12}-(u^*)^2\frac {u^*+u^N_m}2\|_{\mathbb H^{-1}}^2 \mathbb I_{\{\|u^{N,k}\|_{\dot \H^1}+\|u^N_m\|_{\dot \H^1} \le \frac 1\epsilon\}} \\
	&+\frac {h^2}8\|(\frac 12((u^*)^2+(u^N_m)^2)-1)\frac {u^*+u^N_m}2\|^2_{\dot \H^{-1}}\mathbb I_{\{\|u^{N,k}\|_{\dot \H^1}+\|u^N_m\|_{\dot \H^1} > \frac 1\epsilon\}}.
	\end{align*}
	By applying the the Sobolev embedding  $ L^{\frac 65}\hookrightarrow\dot{\mathbb H}^{-\frac d3}$ and Young's inequality, we further have that for small $h\in (0,1),$
	\begin{align*}
	&\|\frac {u^*+u^N_m}2-u^{N,k+\frac 12}\|_{L^2}^2\\
	\le& C {h^2} \Big(\|u^{N,k}-u^*\|_{L^2}^2(\|u^{N,k}\|_{L^6}^2+\|u^*\|_{L^6}^2)(\|u^*\|_{L^6}^2+\|u^N_m\|_{L^6}^2)\\
	&+\|u^{N,k}\|_{L^6}^4\|u^{N,k+\frac 12}-\frac {u^*+u^N_m}2\|_{L^2}^2\Big) \mathbb I_{\{\|u^{N,k}\|_{\dot \H^1}+\|u^N_m\|_{\dot \H^1} \le \frac 1\epsilon\}}\\
	&+C {h^2}\Big(\|u^*\|_{L^2}^2+\|u^N_m\|_{L^2}^2+\|u^*\|_{L^6}^6+\|u^{N}_m\|_{L^6}^6\Big)\mathbb I_{\{\|u^{N,k}\|_{\dot \H^1}+\|u^N_m\|_{\dot \H^1} > \frac 1\epsilon\}}.
	\end{align*}
	{\color{black}Notice that} the energy preservation of $(u^*,v^*)$ leads to
	\begin{align*}
	\frac 12\|u^*\|_{\dot \H^1}^2+\frac 12\|v^*\|_{L^2}^2+\frac 14\|u^*\|_{L^4}^4
	&\le \frac 12\|u^{N}_m\|_{\dot \H^1}^2+\frac 12\|v^{N}_m\|_{L^2}^2+\frac 14\|u^{N}_m\|_{L^4}^4+C_1.
	\end{align*}
	Combining the above estimates, using Chebyshev's inequality and \eqref{pri-est-fix}, we conclude that 
	\begin{align*}
	&\|\frac {u^*+u^N_m}2-u^{N,k+\frac 12}\|_{L^2}^2\\
	%\le&  C {h^2} \Big(\|u^{N,k}-u^*\|_{L^2}^2(\|u^{N,k}\|_{L^6}^2+\|u^*\|_{L^6}^2)(\|u^*\|_{L^6}^2+\|u^N_m\|_{L^6}^2)\\
	%&+\|u^{N,k}\|_{L^6}^4\|u^{N,k+\frac 12}-\frac {u^*+u^N_m}2\|_{L^2}^2\Big) \mathbb I_{\{\|u^{N,k}\|_{\dot \H^1}+\|u^N_m\|_{\dot \H^1} \le \frac 1\epsilon\}}\\
	%&+C {h^2}\Big(\|u^*\|_{L^2}^2+\|u^N_m\|_{L^2}^2+\|u^*\|_{L^6}^6+\|u^N_m\|_{L^6}^6\Big)\mathbb I_{\{\|u^{N,k}\|_{\dot \H^1}+\|u^N_m\|_{\dot \H^1} > \frac 1\epsilon\}}\\
	\le& Ch^2\frac 1{\epsilon^8}\Big(\|u^{N,k}-u^*\|_{L^2}^2+\|\frac {u^*+u^N_m}2-u^{N,k+\frac 12}\|_{L^2}^2\Big)+C {h^2}\Big(\|u^*\|_{L^2}^2+\|u^N_m\|_{L^2}^2\\
	&+\|u^*\|_{L^6}^6+\|u^N_m\|_{L^6}^6\Big)\mathbb I_{\{\|u^{N,k}\|_{\dot \H^1}+\|u^N_m\|_{\dot \H^1} > \frac 1\epsilon\}}.
	\end{align*}
Letting $\frac {Ch^2\frac 4 {\epsilon^{8}}}{1-Ch^2\frac 1{\epsilon^8}}<\frac 12,$ we obtain that 
	\begin{align*}
	&\|\frac {u^*+u^N_m}2-u^{N,k+\frac 12}\|_{L^2}^2\\
	\le& \frac 12 \|\frac {u^*+u^N_m}2-u^{N,k-\frac 12}\|_{L^2}^2
	+\frac 18 \epsilon^8 (\|u^*\|_{L^2}^2+\|u^N_m\|_{L^2}^2+\|u^*\|_{L^6}^6+\|u^N_m\|_{L^6}^6) \\
	&\times \mathbb I_{\{\|u^{N,k}\|_{\dot \H^1}+\|u^N_m\|_{\dot \H^1} > \frac 1\epsilon\}}.
	\end{align*}
	
Applying the Gaussian type tail estimates for $(u_m^N,v_m^N)$ and $(u^*,v^*)$ which possess some exponential integrability (see e.g. the proof of [13, Corollary 4.1]), and using the Sobolev embedding theorem, the assumption $$\E\Big[\exp({V_1(u_m^N,v_m^N)}{e^{-\alpha t_m}})\Big]<\infty$$ in Appendix, as well as the definition of $V_1$, together with \eqref{pri-est-fix}, we obtain that for  $C'h(1+\frac 1{\epsilon^2})\le \widetilde C_1<\infty,$ $(1+C'h(1+\frac 1{\epsilon^2}))Ch(\frac 1 \epsilon+\frac 1 {\epsilon^3})\le \widetilde C_2 \ll \frac 1{\epsilon},$
 \begin{align*}
&\quad\mathbb P\Big(\{\|u^{N,k}\|_{\dot \H^1}+\|u^N_m\|_{\dot{\mathbb H}^{1}} \ge  \frac 1\epsilon\}\Big)\\
&\le \mathbb P\Big(\{\widetilde C_1\sqrt{\|u^N_{m}\|_{\dot \H^1}^2+\| v^{N}_m\|_{L^2}^2}+\widetilde C_2+\|u^N_m\|_{\dot{\mathbb H}^{1}} \ge \frac 1\epsilon\}\Big)\\
&\le \mathbb P\Big(\{(\widetilde C_1 +1)\sqrt{\|u^N_m\|_{\dot{\mathbb H}^{1}}^2 + \|v^N_{m}\|_{L^2}^2}\ge \frac 1\epsilon-\widetilde C_2 \}\Big)\\
&\le  \mathbb P\Big(\{\sqrt{\|u^N_m\|_{\dot{\mathbb H}^{1}}^2 + \|v^N_{m}\|_{L^2}^2} \ge \frac 1{\widetilde C_1 +1} (\frac 1 \epsilon-\widetilde C_2) \}\Big)\\
&\le C_{21}\exp(-\eta(\frac 1{\widetilde C_1 +1} (\frac 1 \epsilon-\widetilde C_2))^2),
 \end{align*}
 where $C_{21}=\E[\exp((\frac 12\|v_m^N\|_{L^2}^2+\frac 12 \|u_m^N\|^2_{\dot{\mathbb H}^1})e^{-\alpha {t_m}})]$ and $\eta=\frac 12 e^{\alpha t_m}. $

The above estimates, together with the fact that $V_1(u^*,v^*)=V_1(u^N_m,v^N_m)$, yield that there exist $\widetilde C>0$ and $\widetilde \eta>0$ depending on $T,Q,d,u^N_m,v^N_m$ such that
	\begin{align*}
	&\E\Big[\|\frac {u^*+u^N_m}2-u^{N,k+\frac 12}\|_{L^2}^2\Big]\\
	\le& \frac 12 \E\Big[\|\frac {u^*+u^N_m}2-u^{N,k-\frac 12}\|_{L^2}^2\Big]
	+\widetilde C \epsilon^8 \Big(\mathbb P\Big(\{\|u^{N,k}\|_{\dot \H^1}+\|u^N_m\|_{\dot \H^1} > \frac 1\epsilon\}\Big)\Big)^{\frac 12}\\
	\le& \frac 12 \mathbb E\Big[\|\frac {u^*+u^N_m}2-u^{N,k-\frac 12}\|_{L^2}^2\Big]
	+\widetilde C\epsilon^8  C_{21}^{\frac 12}\exp(-\eta(\frac 1{2(\widetilde C_1 +1)} (\frac 1 \epsilon-\widetilde C_2))^2)\\
	\le& \frac 12 \mathbb E\Big[\|\frac {u^*+u^N_m}2-u^{N,k-\frac 12}\|_{L^2}^2\Big]
+\widetilde C\epsilon^8\exp(- \widetilde \eta\frac 1{\epsilon^2}),
	\end{align*}
where $\widetilde C$ in the last equality differs from previous one. 		
Then by iteration arguments, we achieve that 
	\begin{align*}
	&\E\Big[\|\frac {u^*+u^N_m}2-u^{N,k+\frac 12}\|_{L^2}^2\Big]\le (\frac 12)^{k+1}\frac 14\E\Big[\|u^*-u^N_m\|_{L^2}^2\Big] + 2\widetilde C\epsilon^8 \exp(- \eta\frac 1{\epsilon^2}).
	\end{align*}
	Since $u^*$ is the numerical solution of scheme \eqref{sche;sbes;det}, the fact that $V_1(u^*,v^*)=V_1(u^N_m,v^N_m)$ and the moment boundedness of $V_1(u^N_m,v^N_m)$ yield that
	\begin{align*}
	\E\Big[\|u^*-u^N_m\|_{L^2}^2\Big]=\frac 14 h^2\E\Big[\|v^N_{m}+v^*\|_{L^2}^2\Big].
	\end{align*}
	By taking $\frac {Ch^2\frac 4 {\epsilon^{8}}}{1-Ch^2\frac 1{\epsilon^8}}<\frac 12,$ $C'h(1+\frac 1{\epsilon^2})\le \widetilde C_1<\infty,$ $(1+C'h(1+\frac 1{\epsilon^2}))Ch(\frac 1 \epsilon+\frac 1 {\epsilon^3})\le \widetilde C_2 \ll \frac 1{\epsilon},$
	the above estimates give that there exist $C,\eta>0$ depending on $Q,T,u_m^N,v_m^N,d$ such that
	\begin{align*}
	\E\Big[\|u^*-u^{N,k+1}\|_{L^2}^2\Big]&\le \max\Big(\frac 14\E\Big[\|u^*-u^N_m\|_{L^2}^2\Big],2C'\epsilon^8\Big)\Big((\frac 12)^{k+1}+\exp(-\eta \frac 1{\epsilon^2})\Big)\\
	&\le C\max(h^2,\epsilon^8)\Big((\frac 12)^{k+1}+\exp(-\eta \frac 1{\epsilon^2})\Big).
	\end{align*} 
	One can obtain the estimate of $\mathbb E\Big[\|v^*-v^{N,k+1}\|^2_{\dot{\mathbb H}^{-1}}\Big]$ by similar arguments. \hfill$\square$

\bibliography{references}
\bibliographystyle{plain}
\end{document}